\newcommand{\comment}[1]{}
\newcommand{\al}{{\alpha}}
\newcommand{\ve}{{\varepsilon}}
\newcommand{\si}{\sigma}
\newcommand{\la}{\lambda}
\newcommand{\Om}{\Omega}
\newcommand{\om}{\omega}
\newcommand{\comp}{\mathbb C}
\newcommand{\reel}{\mathbb R}
\newcommand{\CC}{\mathbb C}
\newcommand{\NN}{\mathbb N}
\newcommand{\RR}{\mathbb R}
\newcommand{\ZZ}{\mathbb Z}
\newcommand{\AAA}{\mathcal{A}}
\newtheorem{theoreme}{Theorem}[section]
\newtheorem{definition}{Definition}[section]
\newtheorem{proposition}{Proposition}[section]
\newtheorem{lemme}{Lemma}[section]
\newtheorem{lemma}{Lemma}[section]
\newtheorem{corollary}{Corollary}[section]
\newtheorem{remark}{Remark}[section]
\newtheorem*{notation*}{Notations}
\theoremstyle{remark}
\newcounter{rev}
\begin{document}

\title{Generalization of the effective Wiener-Ikehara theorem}

\author{Anne de Roton \& Szil\' ard Gy. R\' ev\' esz{\thanks{Supported in part by the Hungarian National Foundation for Scientific Research, Project \#s K-81658 and K-100461. Work done in the framework of the project ERC-AdG 228005.}}}

\let\oldfootnote\thefootnote
\def\thefootnote{}

\maketitle

\centerline{\emph{Dedicated to J\'anos Pintz on the occasion of his sixtieth birthday}}

{\small \tableofcontents}

\begin{abstract} We consider the classical Wiener-Ikehara Tauberian theorem, with a generalized condition of slow decrease and some additional poles on the boundary of convergence of the Laplace transform. In this generality, we prove the otherwise known asymptotic evaluation of the transformed function, when the usual conditions of the Wiener-Ikehara theorem hold. However, our version also provides an effective error term, not known thus far in this generalality. The crux of the proof is a proper, asymptotic variation of the lemmas of Ganelius and Tenenbaum, also constructed for the sake of an effective version of the Wiener-Ikehara Theorem.
\end{abstract}

{\bf MSC 2000 Subject Classification.} Primary 11M41; Secondary 40E05.

{\bf Keywords and phrases.} {\it Laplace transform, Dirichlet series, Wiener-Ikehara theorem, Tauberian theorems, slowly decreasing functions, moderately decresing functions, Fourier transform, Lemma of Ganelius, effective error term.}

\baselineskip=15pt

\maketitle


\section{Introduction}

For a carefully written, detailed account of the history and extensive work in the area of the Tauberian theorems of the family usually labeled as Wiener-Ikehara Theorem, we emphasize \cite[p.125]{Ko}. Our point here is the discussion of general versions of these results.

We first recall Theorem 4.2 of \cite{Ko}, the integral form of the classical Wiener-Ikehara Tauberian theorem.
\begin{theoreme}[Wiener-Ikehara]\label{class_Wienik}
Let $A(t)$ vanish on $(-\infty,0)$, be nondecreasing, continuous from the right and such that the Laplace-Stieltjes transform
$\mathcal{A}(s)=\int_0^\infty e^{-st}dA(t)$ is convergent for $\Re{s}>1$.
Suppose that for some constant $c$, the analytic function
\begin{equation}\label{eq:Gdef}
G(s):=\frac{1}{s+1}\mathcal{A}(s+1)-\frac{c}{s}
\end{equation}
has a boundary function in the following sense. For $x$ tending to $1$ from the right, the function $G_x(iy)=G(x+iy)$ converges to $G(1+iy)$ either uniformly or in $L^1$ on every finite interval $-T<y<T$. Then
$$\lim_{t\rightarrow +\infty}e^{-t}A(t)= c.$$
\end{theoreme}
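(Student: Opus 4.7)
The plan is to reduce the problem to a Fourier-analytic approximate identity argument, using the monotonicity of $A$ for the final Tauberian bridge. Set $B(t):=e^{-t}A(t)$ and $S(t):=B(t)-c$. An integration by parts on $[0,T]$ (using $A(0^-)=0$ and the right-continuity) gives, for $\Re s>0$,
\begin{equation*}
\int_0^\infty e^{-st}S(t)\,dt \;=\; \frac{\AAA(s+1)}{s+1}-\frac{c}{s} \;=\; G(s),
\end{equation*}
so $G$ is precisely the Laplace transform of $S$ on the open right half-plane, and it remains to show $S(t)\to 0$.

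Fix $T>0$ and introduce the triangular weight $V_T(y):=(1-|y|/T)\mathbf{1}_{[-T,T]}(y)$, whose Fourier transform is the Fej\'er-type kernel $F_T(x)=4\sin^2(Tx/2)/(Tx^2)$, satisfying $\int_{-\infty}^\infty F_T=2\pi$. For $\la>0$ set
\begin{equation*}
I_\la(t) \;:=\; \int_{-T}^T G(\la+iy)V_T(y)e^{ity}\,dy.
\end{equation*}
The Laplace representation of $G$, combined with Fubini (licit for $\la>0$, since $\AAA(s+1)$ converges absolutely there), gives
\begin{equation*}
I_\la(t) \;=\; \int_0^\infty S(u)e^{-\la u}F_T(t-u)\,du.
\end{equation*}
The boundary hypothesis on $G$ permits passing $\la\to 0^+$ on the left, yielding $I_0(t):=\int_{-T}^T G(iy)V_T(y)e^{ity}\,dy$. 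On the right, splitting $S=B-c$ and applying monotone convergence to the nonnegative part $B$, together with the explicit limit of the constant piece, one obtains
\begin{equation*}
\int_0^\infty B(u)F_T(t-u)\,du \;=\; I_0(t)+c\int_0^\infty F_T(t-u)\,du.
\end{equation*}

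Now let $t\to+\infty$ with $T$ fixed. The Riemann--Lebesgue lemma applied to the compactly supported $L^1$ function $y\mapsto G(iy)V_T(y)$ yields $I_0(t)\to 0$, while $\int_0^\infty F_T(t-u)\,du\to\int_{-\infty}^\infty F_T(v)\,dv=2\pi$. Hence, for every fixed $T>0$,
\begin{equation*}
\int_0^\infty B(u)F_T(t-u)\,du \;\longrightarrow\; 2\pi c \quad (t\to+\infty).
\end{equation*}

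The Tauberian conversion of this smoothed asymptotic into the pointwise limit $B(t)\to c$ is the main conceptual obstacle. The monotonicity of $A$ yields the pinching $e^{-\de}B(t)\le B(u)\le e^{\de}B(t)$ for $u\in[t-\de,t+\de]$, expressing the slow decrease (and slow increase) of $B$. Combined with the approximate-identity behaviour of $F_T/2\pi$ as $T\to\infty$, together with an a priori bound $\limsup_{t\to\infty} B(t)<\infty$ extracted from the smoothed asymptotic itself by fixing $T$ large enough, this produces upper and lower envelopes for $\limsup_{t\to\infty}B(t)$ and $\liminf_{t\to\infty}B(t)$ that both collapse to $c$ when one finally sends $T\to\infty$. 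The delicate point is controlling the tails $|u-t|>\de$, where $F_T(t-u)$ decays only like $1/(T(t-u)^2)$: the uniform-in-$T$ control of $B$ must be strong enough to make this tail contribution vanish in the limit, and it is precisely here that a sharp quantitative version (the Ganelius/Tenenbaum-type lemma advertised in the abstract) will be needed to produce an effective error term.
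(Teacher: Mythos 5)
A preliminary remark on the comparison itself: the paper does not prove Theorem \ref{class_Wienik} at all — it is quoted from Korevaar \cite{Ko} — so the only meaningful benchmarks are the classical proof (which is the route you follow) and the paper's later effective machinery, which recovers the statement for nondecreasing $A$ as a special case of Theorem \ref{th:kle}. The first half of your argument is the classical Bochner--Korevaar method and is sound: the identity $G(s)=\int_0^\infty e^{-st}\left(e^{-t}A(t)-c\right)dt$ for $\Re s>0$, smoothing against the Fej\'er kernel, passage to the boundary (which you correctly read as $\Re s\to 0^{+}$, the ``$x\to1$'' of the quoted statement being a normalization slip) using the uniform/$L^1$ hypothesis on one side and monotone convergence for the nonnegative part $B=e^{-t}A$ on the other, then Riemann--Lebesgue, giving $\int_0^\infty B(u)F_T(t-u)\,du\to 2\pi c$ for each fixed $T$; the a priori bound $\limsup_{t\to\infty}B(t)<\infty$ is indeed extractable from this, as you indicate.

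The gap is in the Tauberian conversion, which is the crux of the theorem: you only describe it, and the single inequality you state for it is false. Monotonicity of $A$ gives exclusively the one-sided bounds $B(u)\ge e^{-\delta}B(t)$ for $u\in[t,t+\delta]$ and $B(u)\le e^{\delta}B(t)$ for $u\in[t-\delta,t]$; the two-sided pinching $e^{-\delta}B(t)\le B(u)\le e^{\delta}B(t)$ on all of $[t-\delta,t+\delta]$ (your ``slow increase'') fails, for instance whenever $A$ has a jump at $t$. With only the one-sided bounds, the kernel centered at $t$ places merely half of its mass over the usable window, so the direct squeeze yields only $\limsup B\le 2c$ and nothing useful from below. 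The missing idea is the recentering device of the classical proof: evaluate the smoothed average at the shifted points $t+\delta$ (resp. $t-\delta$), so that the full mass $\int_{-\delta}^{\delta}F_T\to 2\pi$ (as $T\to\infty$) sits over $[t,t+2\delta]$ (resp. $[t-2\delta,t]$) where the relevant one-sided bound applies, control the remaining range by $\limsup B<\infty$ times $\int_{|v|>\delta}F_T\to 0$, and finally let $T\to\infty$ and then $\delta\to 0$ to obtain $c\le\liminf B\le\limsup B\le c$. Your closing sentence also points in the wrong direction: no Ganelius--Tenenbaum lemma is needed for this qualitative statement; that machinery (Lemmas \ref{l:G} and \ref{l:T}) is what the paper develops to obtain \emph{effective} error terms under weaker decrease conditions (Theorems \ref{th:kle} and \ref{th:kle1}), whereas Theorem \ref{class_Wienik} closes with the elementary shifted-window squeeze just described.
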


\begin{remark}\label{l:convergenceetordre}
The convergence of the integral $\mathcal{A}(s)$ for $\Re{s}>1$ implies in itself an upper bound for the summatory function $A(t)$ of  locally bounded variation on $\reel^+$ ($A$ does not even need to be nondecreasing). Integration by parts indeed leads to the equivalence between the convergence of the Laplace-Stieltjes transform for $\Re{s}>1$ and the estimation $|A(t)|\ll_{\ve} e^{(1+\varepsilon)t} $for any positive $\ve$.
\end{remark}

Theorem \ref{class_Wienik} has been generalised in different ways.
\begin{itemize}
\item The Tauberian condition "$A$ nondecreasing" can be relaxed to some boundedly or slowly decreasing conditions, see \cite[p.135, 141, 142]{Ko} but in case of bounded decrease, one has to strengthen the regularity of the Laplace-Stieltjes transform on the boundary by assuming its analyticity on the border line.
\item One may allow $\mathcal{A}(s)$ to have some more general singularity at $s=1$ \cite[p.326]{T2} and one may take $T$ in Theorem \ref{class_Wienik} to be fixed or even as small as we want but this may lead to a boundedness result rather than a convergence result \cite[p.128, 142]{Ko}.
\item There also exists some effective Wiener-Ikehara theorem giving an explicit error term \cite[p.326]{T2}. In order to get an explicit error term, we need to know more about the behavior of the Laplace-Stieltjes transform $\mathcal{A}(s)$ near and on the border line ($T$ has to be taken large) and we cannot relax too much the Tauberian conditions.
\end{itemize}

In this work, we give some boundedness theorem for slowly and even boundedly decreasing functions $A$ under some assumption of local behavior of $\mathcal{A}(s)$ and an effective Wiener-Ikehara theorem for functions $A$ admitting a slowly or moderately decreasing condition (to be defined later) under some regularity assumptions of $\mathcal{A}(s)$ on the border line.

Section \ref{sec:slowdecrease} will be devoted to the definitions of boundedly, slowly and moderately decreasing functions and to the links between these notions. We recall and state some preliminary results in Section \ref{preliminary}. In Section \ref{sec:boundedness}, we present the boundedness Tauberian theorem under local assumptions. Finally, in Section \ref{sec:multiplepoles}, we shall present the generalized effective Wiener-Ikehara theorems. The proof of these results will require some lemma close to the Ganelius lemma as generalized by Tenenbaum in \cite[p.328]{T2}. This lemma will be given in Section \ref{sec:Ganelius}.

\bigskip

We shall use in the following the notion of functions of bounded variation, see e.g. \cite[pp. 116--]{SS}. For a short account of the notion explained in the Lebesgue integral context, i.e. for Lebesgue-Stieltjes integrals, see e.g. Appendix 6 on p. 436-437 of \cite{BGT}.
\begin{notation*}
If $I$ is an interval and $f$ a function defined on an interval containing $I$, we shall write $\|f\|_{I}$ for $\sup_{x\in I}|f(x)|$.

We define the Fourier transform of a function $f\in L^1(\RR)$ as
\begin{equation*}
    \widehat{f}(\tau)=\int_{-\infty}^{+\infty}e^{-i\tau x}f(x)dx \quad (\tau\in\RR).
\end{equation*}
\end{notation*}

\section{Conditions on controlled decrease of functions}\label{sec:slowdecrease}

Here we recall and introduce some relevant function theoretic notions
-- that of \emph{slowly decreasing functions}, introduced by Schmidt
in \cite{S1, S2}, an essentially weaker version of the property,
called here \emph{(locally uniformly) bounded decrease} and used in a
similar form e.g. by Korevaar \cite[Proposition 10.2. (i)]{Ko} and a
variation of the notion, which we will term as \emph{moderate
  decrease}. Our analysis will cover some aspects not
found in the literature and altogether is meant 
to be a self-contained precursor to the later applications of these properties
as Tauberian conditions in Sections \ref{sec:boundedness} and
\ref{sec:multiplepoles}.

Let us start with a general analysis of real functions $f:[a,\infty) \to \RR$, where $a>0$ is an arbitrarily fixed parameter. We can always consider, for arbitrary $\lambda \geq 1$, the quantities
\begin{equation}\label{eq:nubardef}
\overline{\nu}(f;\lambda):=\overline{\nu}(\lambda):=-\inf \{ f(y)-f(x)~ : ~ a \le x \le y \le \lambda x\}
\end{equation}
and
\begin{align}\label{eq:nudef}
\nu(f;\lambda):=\nu(\lambda)&:=-\liminf_{ x \to \infty} \{
f(y)-f(x)~ : ~ a \le x \le y \le \lambda x \}
 \\ \notag & =-\lim_{ z \to \infty} \Big(\inf \{ f(y)-f(x)~ : ~ z \le x \le y \le \lambda x \}\Big).
\end{align}
The definition of $\nu(\lambda)$ and $\overline{\nu}(\lambda)$ entails that for any $\lambda\geq 1$ there exists a nonincreasing function $\Psi_{\lambda}$ with $\lim\limits_{x\to\infty} \Psi_\lambda(x)=0$ satisfying
\begin{equation}\label{eq:nuPsilambda}
f(y)-f(x) \geq -\nu(\lambda)-\Psi_{\lambda}(x) \geq - \overline{\nu}(\lambda) \quad \textrm{whenever}\quad a\leq x\leq y \leq \lambda x.
\end{equation}
Clearly $\nu(1)=\overline{\nu}(1)=0$ and $0\leq {\nu}(\lambda) \leq
\overline{\nu}(\lambda)$. Both quantities are nondecreasing functions of their variable $\lambda\in [1,\infty)$ and are subadditive in the sense that for any values $ \lambda_1,\lambda_2\geq 1$,
we have $ {\nu}(\lambda_1\lambda_2)\leq
{\nu}(\lambda_1)+\nu(\lambda_2)$ and similarly for
$\overline{\nu}$. Of course the above remains valid even if
the values become infinite, so when $\nu$ or $\overline{\nu}$ are
functions only in the extended sense. However, once for \emph{any}
value $\lambda_0>1$ we have $\overline{\nu}(\lambda_0)$ or
$\nu(\lambda_0)$ finite, then by monotonicity and subadditivity
\emph{all} values remain finite. As we will always consider locally bounded functions $f$,
in fact for us finiteness of $\overline{\nu}$ and $\nu$ will be equivalent,
although in general there exist functions
which are unbounded and oscillate badly at some point and thus have
$\nu(\lambda)<+\infty$ whereas $ \overline{\nu}(\lambda)=+\infty$.

The first notion which we will use further as a condition on the decrease of a function is thus expressed by the finiteness of these characteristics.

\begin{definition}\label{def:Korevaar}
We say that a real function $f$ defined on $[a,+\infty)$ is \emph{boundedly decreasing}
if we have $\overline{\nu}(f;\lambda)<\infty$ for some -- and hence for all -- $\lambda >1$.
\end{definition}
Applying Definition \ref{def:Korevaar} on intervals of the form $[e^kx,e^{k+1}x]$ with $\lambda=e$, we infer for any $\lambda\geq e$ the lower bound
\begin{equation}\label{minlambda}
f(y) - f(x) \geq - \overline{\nu}(f;e)\lceil \log\lambda \rceil \qquad \textrm{whenever}\qquad (a \leq) x\leq y \leq \lambda x.
\end{equation}
In this work we basically consider functions $F$ with $F(x)/x$ boundedly decreasing on $[a,\infty)$ for some $a>0$. Taking $x=a$ in inequality \eqref{minlambda}, we see that if $F(x)/x$ is a boundedly decreasing function, then for $y\geq a$
\begin{equation}\label{minslowdec}
F(y) \geq - M y(1+\log{y}-\log{a}) \geq - 2 M y\max(1-\log{a},\log{y})
\end{equation}
with $M:=M(F,a):=\overline{\nu}(F(x)/x;e)+F(a)_{-}/a$.

Subadditivity, nonnegativity and monotonicity entails that whenever $\overline{\nu}$ is continuous at 1, that is when we have $\lim_{\lambda\to 1} \overline{\nu}(\lambda) =\overline{\nu}(1)=0$, then $\overline{\nu}$ is continuous everywhere on $[1,\infty)$. (It is clear that this limit must be at least 0, the question is if it is indeed zero.) The situation is the same for $\nu$.
\begin{definition}[Schmidt]\label{def:Schmidt}
Let $a>0$. We say that a real function $f$ defined on $[a,+\infty)$ is \emph{slowly decreasing} if we have
\begin{equation}\label{eq:Schmidttailored}
\lim\limits_{\lambda\downarrow 1} \liminf\limits_{x\to\infty} \inf\limits_{x\leq y\leq\lambda x} \left\{f(y) - f(x) \right\}= 0,
\end{equation}
that is if $~\lim_{\lambda\downarrow 1} \nu(\lambda) =0$, and consequently $\nu$ is a nonnegative, finite, continuous, subadditive, nondecreasing function of $\lambda\in [1,\infty)$.
\end{definition}

Ever since R. Schmidt's pioneering work \cite{S1, S2}, slowly decreasing functions have been used extensively in the context of Tauberian theorems, see e.g. \cite[Ch. III, \S 8]{Wi}, \cite[p.41]{BGT}, \cite[p.33, p.69]{Ko}.
Properties of slowly decreasing functions were already well-explored by Schmidt \cite{S1}, \cite{S2}, and can be found in several places, see e.g. \cite[p. 298]{Wi}.

In the following we will encounter various restrictions on the decrease of a function as Tauberian conditions in deriving an effective Wiener-Ikehara Theorem. A usual condition in this context, (and one used also in Karamata's Theorem) is that $F(x)/(x\log^m x)$ is slowly decreasing. One would think at first sight, that once $F(x)/x$ is slowly decreasing, so does the further divided $F(x)/(x\log^m x)$, but caution is aroused in seeing that division by a nice increasing function in itself does not preserve slow decrease, as is the case with division by $x$, see Corollary \ref{cor:dividebylog}.

The next proposition will give, for some nondecreasing function $\ell$,  the order estimates on a general slowly or boundedly decreasing function $q(x)$ that guarantee that also $q/\ell$ be slowly or boundedly decreasing.
\begin{proposition}\label{prop:dividebyfi}
Let $a>0$ be a real number and $\ell:[a,+\infty)\rightarrow (0,\infty)$ be a positive, nondecreasing, differentiable function.
If $\ell'/\ell$ is nonincreasing, then for all slowly (respectively boundedly) decreasing functions $q(x)$ satisfying $q(x)=O(\ell^2(x)/(x\ell'(x)))$ we have $q/\ell$ also slowly (respectively boundedly) decreasing on $[a,+\infty)$.
\end{proposition}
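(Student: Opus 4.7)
The natural plan is to control
$$\frac{q(y)}{\ell(y)}-\frac{q(x)}{\ell(x)} = \frac{q(y)-q(x)}{\ell(y)} - q(x)\,\frac{\ell(y)-\ell(x)}{\ell(x)\ell(y)}$$
by treating the two summands separately: the first will be handled via the decrease hypothesis on $q$, the second via the size bound $q(x)=O(\ell^2(x)/(x\ell'(x)))$ together with the monotonicity of $\ell'/\ell$.

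For the first summand the key facts are that $\ell(y)\ge \ell(a)>0$ (by positivity and monotonicity of $\ell$) and the inequality \eqref{eq:nuPsilambda} expressing slow (respectively bounded) decrease, which together yield the lower bound $-(\nu(q;\lambda)+\Psi_\lambda(x))/\ell(a)$, respectively $-\overline{\nu}(q;\lambda)/\ell(a)$. Taking $\liminf_{x\to\infty}$, so that $\Psi_\lambda(x)$ drops out, and then $\lambda\downarrow 1$, one obtains the desired vanishing in the slowly decreasing case, and a uniform bound for any fixed $\lambda>1$ in the boundedly decreasing case. The main obstacle, and the heart of the argument, is the second summand. Here the hypothesis that $\ell'/\ell$ is nonincreasing lets us bound, for $x\le t\le y$, $\ell'(t)\le \ell(t)\,\ell'(x)/\ell(x)\le \ell(y)\,\ell'(x)/\ell(x)$, and integration gives
$$\ell(y)-\ell(x)\le (y-x)\,\frac{\ell(y)\,\ell'(x)}{\ell(x)}.$$
Combined with $|q(x)|\ll \ell^2(x)/(x\ell'(x))$ and $y-x\le(\lambda-1)x$, the factors involving $\ell$ and $\ell'$ cancel, leaving the second summand bounded in absolute value by $C(\lambda-1)$, where $C$ depends only on the implicit constant in the size bound on $q$.

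Adding the two estimates gives
$$\nu(q/\ell;\lambda)\le \frac{\nu(q;\lambda)}{\ell(a)} + C(\lambda-1),$$
which tends to $0$ as $\lambda\downarrow 1$ by the slow decrease of $q$, establishing the slow decrease of $q/\ell$. The boundedly decreasing case follows from the same chain with $\overline{\nu}$ in place of $\nu$, giving finiteness of $\overline{\nu}(q/\ell;\lambda)$ for every $\lambda>1$. It is worth noting that without the monotonicity hypothesis on $\ell'/\ell$ one could only obtain something like $\ell(y)-\ell(x)\le (y-x)\sup_{[x,y]}\ell'$, which is not sharp enough to produce the cancellation against $\ell^2(x)/(x\ell'(x))$; this is precisely why the structural hypothesis on $\ell$ is needed.
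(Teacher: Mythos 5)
Your proof is correct and follows essentially the same route as the paper: the same splitting of $\frac{q(y)}{\ell(y)}-\frac{q(x)}{\ell(x)}$, the bound $\ell(y)\ge\ell(a)$ for the first term, and the cancellation $|q(x)|\,(y-x)\,\ell'(x)/\ell^2(x)\le C(\lambda-1)$ for the second, using the monotonicity of $\ell'/\ell$. The only cosmetic difference is that you bound $\ell(y)-\ell(x)$ by integrating $\ell'$ while the paper invokes the Mean Value Theorem, which amounts to the same estimate.
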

\begin{remark}
Analogous results can be derived for more general functions $\ell$. Since all the functions $\ell$ which we shall use satisfy the assumptions in the proposition, we do not state the proposition in its full generality.
\end{remark}

\begin{corollary}\label{cor:dividebylog} Let $q$ be a slowly (respectively boundedly) decreasing function on $[1,+\infty)$.
If $m$ is a positive real number such that $q(x)=O(\log^{m+1} x)$ for $x\geq e$, then also $q(x)/\log^m x$ is slowly (respectively boundedly) decreasing on $[1,+\infty)$ whereas if $q(x)=O(x)$ for $x\geq 1$, then also $q(x)/x$ is slowly (respectively boundedly) decreasing on $[1,+\infty)$.

Then again, for any positive function $\omega$ tending to infinity, however slowly, there exists a slowly decreasing function $q$ on $[1,+\infty)$ such that $q(x)=O(x\omega(x))$ for $x$ large enough, but $q(x)/x$ is not even boundedly decreasing.
\end{corollary}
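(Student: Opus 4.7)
The first two assertions follow directly from Proposition \ref{prop:dividebyfi}. My plan is to apply the proposition with $\ell(x)=\log^m x$ in the first case and $\ell(x)=x$ in the second; since slow and bounded decrease are properties at infinity, the vanishing of $\log^m x$ at $x=1$ is harmless, and the statement on $[1,+\infty)$ is to be read with that tacit restriction to $[e,+\infty)$. The verification is then a short calculation: for $\ell(x)=\log^m x$ one obtains $\ell'(x)/\ell(x)=m/(x\log x)$, which is nonincreasing, and $\ell(x)^2/(x\ell'(x))=\log^{m+1}x/m$; for $\ell(x)=x$ one has $\ell'/\ell=1/x$, nonincreasing, and $\ell^2/(x\ell')=x$. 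In both cases the growth bound assumed on $q$ is exactly what the proposition requires.

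For the final, negative assertion, the plan is to exhibit an explicit \emph{nondecreasing} function $q$, which is thus trivially slowly decreasing with $\nu\equiv\overline{\nu}\equiv 0$, whose irregularity appears only after division by $x$. After replacing $\omega$ by its nondecreasing minorant $\tilde\omega(x):=\inf_{y\ge x}\omega(y)$ (still tending to $+\infty$), one may assume $\omega$ nondecreasing. I then pick an increasing sequence $a_n\to\infty$ with $a_{n+1}\ge 4a_n\omega(a_n)$ and define $q$ on $[1,\infty)$ piecewise linearly, equal to $0$ on $[1,a_1]$, growing linearly from its left-endpoint value up to $a_n\omega(a_n)$ on $[a_n,2a_n]$, and remaining constant at $a_n\omega(a_n)$ on $[2a_n,a_{n+1}]$. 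Monotonicity of $\omega$ readily gives $q(x)\le x\omega(x)$ throughout, establishing the claimed growth. Bounded decrease of $q(x)/x$ is then refuted by the pairs $x_n=2a_n$, $y_n=4a_n$ (so $y_n/x_n=2$), for which
\begin{equation*}
\frac{q(y_n)}{y_n}-\frac{q(x_n)}{x_n}=\frac{\omega(a_n)}{4}-\frac{\omega(a_n)}{2}=-\frac{\omega(a_n)}{4}\longrightarrow-\infty,
\end{equation*}
so that $\overline{\nu}(q/x;2)=+\infty$ and, by Definition \ref{def:Korevaar}, $q/x$ is not boundedly decreasing.

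The main difficulty is not in parts (i) and (ii), which are routine verifications of the hypotheses of Proposition \ref{prop:dividebyfi}, but in the calibration behind the counterexample: the jumps $a_n\omega(a_n)$ must be large enough to force $q(x)/x$ to drop by $\omega(a_n)/4$ across a fixed dilation factor, yet the plateau values must remain bounded by $x\omega(x)$ for all larger $x$. The rapid growth condition $a_{n+1}\ge 4a_n\omega(a_n)$ is exactly what reconciles these two competing requirements while keeping $q$ nondecreasing.
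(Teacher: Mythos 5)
Your proof is correct. For the first two assertions you do exactly what the paper does: apply Proposition \ref{prop:dividebyfi} with $\ell(x)=\log^m x$ and $\ell(x)=x$, after the same routine verification of the hypotheses (and your remark about the harmless degeneracy of $\log^m x$ at $x=1$ is a fair reading of the statement). For the counterexample your construction differs from the paper's, though it exploits the same mechanism: keep $q$ essentially constant across a multiplicative window on which $q(x)/x$ is unboundedly large, so that $q/x$ must drop there by an amount tending to infinity. The paper tames $\omega$ into a nondecreasing unbounded minorant $s$ with the extra increment property $s(ex)\le s(x)+2$ and sets $q(x)=e^{\lfloor\log x\rfloor}s(x)$; the drops then occur at every scale $e^n$ and for every dilation factor $\lambda\in(1,e)$, with magnitude roughly $(1-1/\lambda)s(e^n)$. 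You instead use sparse scales $a_n$ with long plateaus at height $a_n\omega(a_n)$, producing drops of size $\omega(a_n)/4$ across the fixed factor $2$; since finiteness of $\overline{\nu}(\lambda)$ at one $\lambda>1$ is equivalent (by monotonicity and subadditivity) to finiteness at all, witnessing $\overline{\nu}(q/x;2)=+\infty$ is indeed enough, so your conclusion stands. Your version is somewhat simpler, dispensing with the increment property of $s$, at the price of exhibiting the failure only at one fixed $\lambda$, which is all the statement requires. One micro-detail to add: the evaluation $q(4a_n)=a_n\omega(a_n)$ needs $4a_n\le a_{n+1}$, which your condition $a_{n+1}\ge 4a_n\omega(a_n)$ yields only when $\omega(a_n)\ge 1$; this is automatic once you start the sequence where the nondecreasing minorant already exceeds $1$ (possible since it tends to infinity), so nothing essential is affected.
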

Note that the first statement of the Corollary shows that for certain functions $\ell$, $q(x)=O(\ell(x))$ is not the ultimate (most general) order condition on a slowly decreasing function $q$ to ensure slow decrease of $q /\ell$; but the last statement exemplifies that for $\ell(x)=x$ it is.
\begin{proof}[Proof of the Corollary] Proposition \ref{prop:dividebyfi} with the choices $\ell(x)=\log^m x$ and $\ell(x)=x$, respectively, yields the first two assertions.

To see the last, we first define a nondecreasing function $s_0$ which tends to infinity as $x$ tends to infinity by $s_0(x):=\inf_{x'\geq x} \omega(x')$. Then we define the function $s$ on$[1,\infty)$ by $s(1)=0$ and for any integer $n\geq 0$,
$$
s(x)=\min(s_0(x),s(e^n)+1), \quad \left(x\in(e^n,e^{n+1}]\right).
$$
The function $s$ is nondecreasing and unbounded. Furthermore it satisfies $0\leq s\leq s_0\leq \omega$, and $s(ex)\leq s(x)+2$ for $x\geq 1$.
The function $q$ defined by $q(x):=e^{\lfloor\log x\rfloor}s(x)$ satisfies $q(x)=O(x\omega(x))$ and is nondecreasing. Nevertheless, $q(x)/x$ is not slowly decreasing since for arbitrary $1<\lambda<e$,
$$
\inf_{n\in\NN}\frac{q(\lambda e^n)}{\lambda e^n}-\frac{q(e^n)}{e^n} =\inf_{n}\frac{e^{n}s(\lambda e^n)}{\lambda e^n}-\frac{e^ns(e^n)}{e^n}
\leq\inf_{n}\frac{2+(1-\lambda)s(e^n)}{\lambda}=-\infty.
$$
\end{proof}
\begin{proof}[Proof of Proposition \ref{prop:dividebyfi}]
Let $C$ be a constant satisfying $q(x)\leq C\dfrac{\ell^2(x)}{x\ell'(x)}$ for any $x\geq a$.

We write for arbitrary $a\leq x\leq y\leq \lambda x$, $\lambda>1$
\begin{align}\label{eq:qperfidifference}
\frac{q(y)}{\ell(y)}- \frac{q(x)}{\ell(x)} &= \frac{q(y)-q(x)}{\ell(y)}-q(x) \left( \frac{1}{\ell(x)} - \frac{1}{\ell(y)} \right).
\end{align}
By the Mean Value Theorem, there exists $\xi\in(x,y)$ such that
\begin{align*}
\left|q(x) \left( \frac{1}{\ell(x)} - \frac{1}{\ell(y)} \right) \right|&=\frac{|q(x)|}{\ell(x)} \frac{\ell(y)-\ell(x)}{\ell(y)}  =  \frac{|q(x)|}{\ell(x)} \frac{\ell'(\xi)(y-x)}{\ell(y)}\\
& \leq \frac{|q(x)|}{\ell(x)} \frac{\ell'(\xi)}{\ell(\xi)} (\lambda-1)x
\leq (\lambda-1) \frac{|q(x)| x\ell'(x)}{\ell^2(x)} \\& \leq C (\lambda-1),
\end{align*}
in view of the monotonicity of $\ell$, $\ell'/\ell$ and the growth order assumption on $q$.
Writing this last formula in \eqref{eq:qperfidifference} yields
$$\frac{q(y)}{\ell(y)}- \frac{q(x)}{\ell(x)} \geq  -\frac{\{q(y)-q(x)\}_-}{\ell(a)}-C (\lambda-1).$$
This gives
$$\frac{q(y)}{\ell(y)}- \frac{q(x)}{\ell(x)} \geq -\frac{\overline{\nu}(\lambda)}{\ell(a)}-C (\lambda-1)$$
in case $q$ is boundedly decreasing and
$$
\liminf_{x\rightarrow +\infty}\inf_{x\leq y\leq\lambda x} \left(\frac{q(y)}{\ell(y)}- \frac{q(x)}{\ell(x)} \right) \geq  - \frac{1}{\ell(a)}\nu(\lambda) - C(\lambda-1),
$$
which tends to 0 when $\lambda\to 1$ in case  $q$ is slowly decreasing . That concludes the proof.
\end{proof}

We introduce now a new class, that of \emph{moderately decreasing} functions, which will have a natural occurrence in our analysis. Some similar, but less general, condition for functions were considered by Korevaar in \cite{Ko}, p.382.

\begin{definition}\label{def:moderatedec}
Let $a\geq0$ be arbitrary. A real function $F$ on $[a,+\infty)$ is \emph{moderately decreasing} if there exist some positive constants $B_1, B_2$ such that for any pair $(u,v)$ of real numbers satisfying $v\geq 0$ and $u\geq a$ we have
\begin{equation}\label{eq:moderatedec}
F(u+v)-F(u) \geq -B_1 v - B_2 \max(1,u)\varphi(u)
\end{equation}
where $\varphi$ is a nonincreasing function on $[a,+\infty)$ satisfying $\varphi(x)=1$ for any $x\in[a,\max(1,a)]$ and $\lim_{u\rightarrow\infty}\varphi(u)=0$.
\end{definition}

From the definition applied to $u=a$ and $v=x-a\geq 0$ one obtains immediately
that for $x\geq \max(a,1)$
\begin{equation}\label{minF}
F(x)\geq - B_1(x-a) -B_2 \max(a,1) - F(a)_{-} \geq  -B x
\end{equation}
with $B:=B(F,a):=B(F,a,B_1,B_2):=B_1 + F(a)_{-}+B_2 $.

Note that \emph{upper} bounds for $F(x)$ \emph{cannot} be derived from the moderately decreasing property, since any increasing function is necessarily moderately decreasing.

In Theorem \ref{th:kleTfixe} we will show, however, that in case we have some control on the Mellin-Stieltjes transform of a moderately decreasing function $A$, then it entails some respective non-trivial upper bound on $A$.

\begin{remark}\label{rem:slowandmoderatenonequiv}
As proved in the proof of Corollary \ref{cor:dividebylog}, there are some increasing (hence moderately decreasing) functions $F$ such that $F(x)/x$ is not even boundedly decreasing, hence not slowly decreasing either.

Conversely, there are some functions $F$ which are not moderately decreasing whereas $F(x)/x$ is slowly decreasing. Even more so, $F(x)/x$ can be chosen \emph{very slowly decreasing} as introduced e.g. in \cite[formula (10.2), page 143]{Ko} and meaning that $\nu(\lambda)=0$ for some -- and hence (in view of monotonicity and subadditivity) for all -- $\lambda>1$.

For instance if $F(x)=- x\log x/\log\log x$, then $F(x)/x$ is very slowly decreasing, but for $u$ large and $v=u/\sqrt{\log u}$, condition \eqref{eq:moderatedec} fails for $F$, moreover, not even condition \eqref{minF} holds true for $x$ large.
\end{remark}

Nevertheless, for well-bounded functions there is \emph{some} connection between these notions of controlled decrease.

\begin{proposition}\label{prop:modOxslow}
Let $a$ and $A$ be real numbers satisfying $1\leq a\leq A$ and $A\geq e$, $m>0$ and $\ell(x):=\log^m x$. If a real function $F$ defined and moderately decreasing on $[a,\infty)$ satisfies $F(x)=O(x\ell(x))$ for $x\geq A$, then $F(x)/(x\ell(x))$ is slowly decreasing on $[A,\infty)$. However, the converse fails to hold, as there exist functions $F(x)=O(x)$ with $F(x)/x$ slowly decreasing, but $F(x)$ not moderately decreasing.
\end{proposition}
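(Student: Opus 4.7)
The proposition splits into the positive implication and a counterexample, which I treat separately. For the positive direction, fix $x\geq A$, $\lambda>1$ near $1$ and $y\in[x,\lambda x]$, and decompose
\[
\frac{F(y)}{y\ell(y)}-\frac{F(x)}{x\ell(x)} \;=\; \frac{F(y)-F(x)}{y\ell(y)} \;-\; F(x)\!\left(\frac{1}{x\ell(x)}-\frac{1}{y\ell(y)}\right).
\]
The first summand is handled by moderate decrease, $F(y)-F(x)\geq -B_1(y-x)-B_2 x\varphi(x)$, combined with $y\ell(y)\geq x\ell(x)$ and $y-x\leq(\lambda-1)x$; this produces a lower bound $-[B_1(\lambda-1)+B_2\varphi(x)]/\ell(x)\to 0$ as $x\to\infty$ (for fixed $\lambda$). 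For the second summand the hypothesis $|F(x)|\leq Cx\ell(x)$ reduces its absolute value to $C(1-g(x)/g(y))$ with $g(t):=t\ell(t)$; the logarithmic-derivative bound $g'(t)/g(t)=(1+m/\log t)/t\leq (1+m)/t$ (valid for $t\geq e$) integrates to $g(y)/g(x)\leq\lambda^{1+m}$, so this piece is at most $C(1-\lambda^{-(1+m)})\to 0$ as $\lambda\to 1$. Taking $\liminf_{x\to\infty}\inf_y$ and then $\lambda\to 1$ yields slow decrease of $F/(x\ell)$.

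For the counterexample I pass to the logarithmic variable by writing $F(x)=xh(x)$ with $h(x)=\tilde h(\log x)$. Then $F=O(x)$ becomes boundedness of $\tilde h$, and slow decrease of $F/x$ becomes uniform continuity of $\tilde h$ (because $\nu(h;\lambda)\leq\omega_{\tilde h}(\log\lambda)$). Substituting $u=e^s$, $v=e^s(e^\delta-1)$ in the moderate-decrease condition on $F$, dividing by $e^s$ and using boundedness of $\tilde h$, extracts the additive condition $\tilde h(s+\delta)-\tilde h(s)\geq -B_1'\delta-B_2\varphi(e^s)$. It therefore suffices to exhibit a bounded, uniformly continuous $\tilde h$ that is \emph{not} additively moderately decreasing. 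I take $\tilde h$ piecewise linear, vanishing outside pairwise disjoint ``down-up tents'' $[s_n,s_n+2\delta_n]$ (with $s_n\to\infty$ widely spaced) on which $\tilde h$ falls linearly from $0$ to $-\omega(\delta_n)$ and then returns to $0$, where $\omega(\delta):=1/\log(1/\delta)$. Disjointness together with the monotonicity of $\omega(\delta)/\delta$ on $(0,1)$ force the modulus of continuity of $\tilde h$ to be at most $\omega$, hence $\tilde h$ is uniformly continuous. A diagonal choice of $(\delta_n)$ arranges that for every integer $k\geq 2$ the value $\delta_n=\delta^\ast(k):=1/(k\log^2 k)$ (a near-maximizer of $\delta\mapsto\omega(\delta)-k\delta$) occurs for infinitely many $n$; at such $n$ one has $\omega(\delta_n)-k\delta_n\sim 1/\log k>0$, which cannot be absorbed by any vanishing $\varphi$, so the additive inequality fails for every $B_1'$.

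The principal obstacle is the counterexample, specifically the balancing act required of $\tilde h$: uniform continuity (to push slow decrease through to $h=F/x$) together with descents \emph{realizing} a non-Lipschitz modulus (to defeat moderate decrease of $F$ at every level $B_1$). The crucial quantitative input is $\omega(\delta)/\delta\to\infty$ as $\delta\to 0$; a Lipschitz modulus would immediately yield additive moderate decrease of $\tilde h$ and hence moderate decrease of $F$, whereas the diagonalization over the scales $\delta^\ast(k)$ exploits $\omega(\delta)/\delta\to\infty$ to produce a uniform-in-$k$ witness. Verifying the modulus bound for $\tilde h$ case-by-case (tent fully or partially inside the probing interval) is routine but relies essentially on the monotonicity property of $\omega(\delta)/\delta$.
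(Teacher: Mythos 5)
Your proof is correct, and the positive half coincides with the paper's argument: the same two-term decomposition, with moderate decrease controlling $(F(y)-F(x))/(y\ell(y))$ and the growth hypothesis $|F(x)|\le Cx\ell(x)$ controlling the second term; your logarithmic-derivative bound $g(y)/g(x)\le\lambda^{1+m}$ is just another way of seeing that this second piece is $o(1)$ as $\lambda\to1$, uniformly in $x\ge A$. The counterexample is where you genuinely diverge in execution, though not in mechanism. The paper stays in the variable $x$: on each dyadic block $[2^n,2^{n+1})$ it lets $f=F/x$ drop linearly by $1/\sqrt{m_n}$ over the relative width $1/m_n$ (with $(m_n)$ hitting every integer $\ge 2$ infinitely often), checks $\nu(f;\lambda)\le\sqrt{2(\lambda-1)}$ directly, and defeats a prospective constant $C$ by choosing $N>4C^2$, so that the drop $\asymp x/\sqrt N$ beats $Cx/N$. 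You instead pass to the additive variable, first proving the reduction that moderate decrease of $F=x\,\tilde h(\log x)$ with $\tilde h$ bounded forces an additive moderate-decrease inequality for $\tilde h$, and then building tents that realize the modulus $\omega(\delta)=1/\log(1/\delta)$ at the near-optimal scales $\delta^\ast(k)$, diagonalizing over $k$. Both constructions exploit exactly the same point, namely that a modulus with $\omega(\delta)/\delta\to\infty$ is compatible with slow decrease of $F/x$ but incompatible with any bound of the form $-B_1v-B_2\max(1,u)\varphi(u)$; yours makes that mechanism explicit and reusable, while the paper's staircase is more economical and fully explicit in its constants. Two small repairs on your side: $\omega(\delta)/\delta$ is monotone only on $(0,1/e)$ and $\delta^\ast(2)>1$, so restrict to small $\delta_n$ and to large $k$ (which suffices, since refuting the additive inequality for arbitrarily large integer $k$ already covers every $B_1'$), and keep the inter-tent gaps bounded below by $1$ so that an interval of length at most $1$ meets only one tent when you verify the modulus bound; neither affects the validity of the argument.
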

\begin{remark}
Corollary \ref{cor:dividebylog} gives that if $F(x)=O(x)$ with $F(x)/x$ slowly decreasing, then $F(x)/(x\ell(x))$ is also slowly decreasing. Therefore the second assertion really proves that the converse of the first one does not hold.
\end{remark}

\begin{proof}[Proof of Proposition \ref{prop:modOxslow}]
Assume that $F$ satisfies \eqref{eq:moderatedec} and $|F(x)|\leq Cx\ell(x)$ for $x\geq A$ and some positive constant $C$. Consider for any $1<\lambda<\log 2$ and $A\leq x\leq y\leq \lambda x$ the estimation
\begin{align*}
\frac{F(y)}{y\ell(y)} - \frac{F(x)}{x\ell(x)} & = \frac{F(y)-F(x)}{y\ell(y)} - \frac{y\ell(y)-x\ell(x)}{y\ell(y)}\frac{F(x)}{x\ell(x)} \\ &\geq - \frac{B_1(y-x) +B_2\varphi(x) x}{y\ell(y)} - C\left( \frac{y-x}{y}+\frac{x}{y}\left(1-\frac{\ell(x)}{\ell(y)}\right)\right)
\\ & \geq - B_2\varphi(x)-(B_1+C)\frac{\lambda-1}{\lambda}-C\left(1-\frac{\ell(x)}{\ell(\lambda x)}\right) ,
\end{align*}
which is already independent of $y\in[x,\lambda x]$. Taking $\lim\limits_{\lambda\downarrow 1} \liminf\limits_{x\to\infty}$ gives that $F(x)/(x\ell(x))$ is slowly decreasing.

To disprove a converse implication, first we define a slowly decreasing and bounded function $f(x)$ with $\nu(\lambda)$ to be large in a sense to be made more precise later.

Let $(m_n)$ be a sequence of integers at least $2$ covering all the integers of $[2,+\infty)$ infinitely often. We define the function $f$ on $[1,+\infty)$ as follows:
$$f(x)=\left\{\begin{array}{lll}
-\frac{\sqrt{m_n}}{2^n}(x-2^n) & \mbox{ if } &x\in I_n:=[2^n,\frac{m_n+1}{m_n}2^n)\\
-\frac{1}{\sqrt{m_n}} & \mbox{ if } &x\in J_n:=[\frac{m_n+1}{m_n}2^n,2^{n+1})
\end{array} ~.
\right.$$
Clearly  $\frac{-1}{\sqrt{2}}\leq \frac{-1}{\sqrt{m_n}} \leq f\leq 0$ on any interval $[2^n,2^{n+1})$, so $f(x)=O(1)$.

Next we show that $f$ is indeed slowly decreasing. For any $x\in[1,+\infty)$ and any $\lambda\in(1,4/3]$, the interval $[x,\lambda x]$ can mesh with at most one interval $I_n$, in which case we have  for any $y\in[x,\lambda x]$,
\begin{align*}
f(y)-f(x) &\geq -\frac{\sqrt{m_n}}{2^n}\min\left(y-x,|I_n| \right) \geq -\frac{\sqrt{m_n}}{2^{n}} \min\left( (\lambda-1) 2^{n+1}, \frac{2^n }{m_n} \right) \\
&\geq - \min\left(2(\lambda-1)\sqrt{m_n},\frac{1}{\sqrt{m_n}} \right) \geq - \sqrt{2(\lambda-1)}
\end{align*}
since $2^{n-1}<x<2^{n+1}$.
In case $[x,\lambda x]$ does not mesh with any interval $I_n$, this is also satisfied since $f(y)-f(x) =0$ for any $y \in[x,\lambda x]$.
This finally proves that $\nu(\lambda)\leq \sqrt{2(\lambda-1)}$ and thus $f$ is slowly decreasing.

So now we define $F(x)=xf(x)$. Then $F(x)=O(x)$, $F(x)/x$ is slowly decreasing, and it remains to see that nevertheless $F(x)$ is \emph{not} moderately decreasing. To do this, we show that for any given fixed positive constant $C$, there exists some $c>0$ such that the difference $F(x+y)-F(x)+Cy$ stays below $-c x$ for some choice of $x,y$ tending to $\infty$.

Making use of the fact that $F(x)\leq 0$, we write
\begin{align*}
F(x+y) - F(x) &\leq  (x+y) \left( \frac{F(x+y)}{x+y} -\frac{F(x)}{x}  \right) = (x+y) \left( f(x+y)-f(x) \right).
\end{align*}
\vbox{
Let us fix $C>0$ and choose $N\in \NN$ so that $N>4 C^2$ and let $n_k=n_k(N)$ be a strictly increasing sequence with $m_{n_k}=N$ ($k=1,2,\dots$). Now with $x_k=2^{n_k}$ and $y_k=\frac{1}{N} 2^{n_k}=|I_{n_k}|$, we have $f(x_k+y_k)-f(x_k) = -1/\sqrt{N}$, thus
$$
F(x_k+y_k) - F(x_k) +C y_k \leq - \frac{x_k+y_k}{\sqrt{N}} + C \frac{x_k}{N} \leq - \frac{x_k}{\sqrt{N}} + \frac{\sqrt{N}}{2} \frac{x_k}{N} = - \frac{x_k}{2\sqrt{N}},
$$
because $C<\sqrt{N}/2$. Therefore, choosing $c:=1/(2\sqrt{N})$ yields the assertion.}
\end{proof}

\begin{remark}\label{weakas}
With Corollary \ref{cor:dividebylog}, we proved that under the assumption $F(x)=O(x \log^m x)$, the assumption $F(x)/x$ slowly (respectively boundedly) decreasing implies that $F(x)/(x \log^{m-1}x)$ is slowly (respectively boundedly) decreasing.
\end{remark}

\section{Preliminary lemmas} \label{preliminary}

We recall here a few classical tools in analysis and formulate some technical lemmas.

We define, for $T>0$ and $u\in\reel$, the usual Fej\'er kernels as
\begin{equation}\label{eq:chiFejer}
\chi(u):=\frac{1}{2\pi}\left(\frac{\sin({u}/{2})}{{u}/{2}}\right)^2\quad\mbox{ and } \quad \chi_T(u):=T \chi(Tu).
\end{equation}
These kernels satisfy the following properties:
\begin{equation}\label{intfejer}
\int_\RR\chi_T(u)du=1,
\end{equation}
\begin{equation}\label{fourierfejer}
\widehat{\chi_T}(\tau)=\widehat{\chi}(\tau/T)=(1-|\tau|/T)_{+},
\end{equation}
and for $q>0$,
\begin{equation}\label{eq:Iesti}
I_q:=\int_{|u|>q/T} \chi_T(u) du=\int_{|u|>q}\chi(u) du \leq \frac{4}{\pi q}.
\end{equation}

We now state a lemma which will come in handy in Sections \ref{sec:boundedness} and \ref{sec:multiplepoles}.
\begin{lemme}\label{lemf}
Let $T>0$ be a positive constant and $f:\RR\rightarrow \RR$ be a bounded function of locally bounded variation.
Let $\lambda_1>0$, $\lambda_2\ge 0$ and $\lambda_3\ge 0$ be some 
constants and assume that $f$ satisfies
\begin{equation}\label{condlemf}
f(u+v)\geq \lambda_1 f(u) \mbox{ as soon as }f(u)>\lambda_2 \mbox{ and } v\in[0,10/T].
\end{equation}
Assume furthermore that
\begin{itemize}
\item either $f(u)\geq -\lambda_3$ for any $u\in\RR$ (Case 1),
\item or $\lambda_1>\dfrac{4}{5\pi-4}$ and the function $g$ defined on $\RR$ by $g(u)=-f(-u)$ satisfies \eqref{condlemf} (Case 2).
\end{itemize}
Then we have
$$\|f\|_\infty\leq k_1\int_{-T}^{T} \left\vert \widehat{f}(\tau) \right\vert d\tau+k_2$$
with
$$ 
k_1:=\frac{5}{2(5\pi-4)\lambda_1}, \qquad \textrm{and} \qquad k_2:=\max\left(\lambda_2,\frac{4 \lambda_3}{(5\pi-4)\lambda_1},\lambda_3\right).
$$
in the first case and
$$
k_1:=\frac{5}{2(5\pi-4)\lambda_1-8}, \qquad \textrm{and} \qquad k_2:=\lambda_2.
$$
in the second one.
\end{lemme}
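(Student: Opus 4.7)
The plan is to smooth $f$ at (or near) its supremum by convolving with the Fej\'er kernel $\chi_T$, and to match two estimates of the resulting value. On one hand, Fourier inversion applied to $\chi_T$ together with $\mathrm{supp}\,\widehat{\chi_T}\subset[-T,T]$ and $\|\widehat{\chi_T}\|_\infty\leq 1$ (see \eqref{fourierfejer}) will give the uniform upper bound
\begin{equation*}
|(f\ast\chi_T)(x)|\leq \frac{1}{2\pi}\int_{-T}^T|\widehat{f}(\tau)|\,d\tau.
\end{equation*}
On the other hand, the controlled-decrease hypothesis \eqref{condlemf} will force a matching lower bound whenever $x$ is close to a point where $f$ nearly attains its supremum. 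The symmetric bound on $-\inf f$ is then handled either via $f\geq -\lambda_3$ (Case 1) or via the analogous argument applied to $g$ (Case 2).

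First I would fix $u_0$ with $f(u_0)$ arbitrarily close to $\sup f$, under the assumption $\sup f>\lambda_2$ (otherwise $\sup f\leq \lambda_2\leq k_2$ is immediate). Setting $x_0:=u_0+5/T$, for $v\in[-5/T,5/T]$ the shifted point $x_0-v$ lies in $[u_0,u_0+10/T]$, so \eqref{condlemf} yields $f(x_0-v)\geq \lambda_1 f(u_0)$. Splitting the convolution $\int f(x_0-v)\chi_T(v)\,dv$ at $|v|=5/T$, using the positivity of $\chi_T$ and the lower bound $f\geq -\lambda_3$ on the tail, together with $I_5\leq 4/(5\pi)$ from \eqref{eq:Iesti}, one obtains
\begin{equation*}
(f\ast\chi_T)(x_0)\geq \lambda_1 f(u_0)\,\frac{5\pi-4}{5\pi}-\lambda_3\,\frac{4}{5\pi}.
\end{equation*}
Combining with the Fourier upper bound and solving for $f(u_0)$ gives $f(u_0)\leq k_1\int_{-T}^T|\widehat{f}(\tau)|\,d\tau+\frac{4\lambda_3}{(5\pi-4)\lambda_1}$. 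Passing $u_0$ to the supremum and folding in the alternative $\sup f\leq\lambda_2$ together with the trivial bound $-\inf f\leq\lambda_3$, all three candidate constants collapse into $k_2=\max\bigl(\lambda_2,\;4\lambda_3/((5\pi-4)\lambda_1),\;\lambda_3\bigr)$, which settles Case 1.

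In Case 2 the bound $f\geq-\lambda_3$ is no longer available, and I would have to replace it by the tautological $f\geq -M$ with $M:=\|f\|_\infty$; carrying the same computation through yields only an estimate of $\sup f$ in terms of $M$. To simultaneously control $-\inf f$, I would apply the identical argument to $g(u):=-f(-u)$, which by hypothesis satisfies \eqref{condlemf}, has $\|g\|_\infty=M$ and also $\int_{-T}^T|\widehat{g}(\tau)|\,d\tau=\int_{-T}^T|\widehat{f}(\tau)|\,d\tau$ (since $\widehat{g}(\tau)=-\widehat{f}(-\tau)$). Because $M=\max(\sup f,\sup g)$, the two applications combine into
\begin{equation*}
M\leq \frac{5}{2(5\pi-4)\lambda_1}\int_{-T}^T|\widehat{f}(\tau)|\,d\tau+\max\Bigl(\lambda_2,\frac{4M}{(5\pi-4)\lambda_1}\Bigr).
\end{equation*}
The main obstacle is closing this bootstrap: the hypothesis $\lambda_1>4/(5\pi-4)$ is exactly what makes the coefficient of $M$ on the right strictly less than $1$, so that $M$ can be absorbed into the left-hand side; straightforward algebra then rearranges the resulting inequality into the announced form with $k_1=5/(2(5\pi-4)\lambda_1-8)$ and $k_2=\lambda_2$.
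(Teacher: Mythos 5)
Your proposal is correct and follows essentially the same route as the paper: convolve $f$ with the Fej\'er kernel at a point shifted by $5/T$ from a near-maximum, bound the convolution from above by $\frac{1}{2\pi}\int_{-T}^{T}|\widehat{f}(\tau)|\,d\tau$ via $\widehat{\chi_T}$, and from below by splitting at $|v|=5/T$ and using \eqref{condlemf} together with $I_5\leq 4/(5\pi)$, which yields exactly the stated constants. The only (harmless) difference is in Case 2, where the paper normalizes $\|f\|_\infty=\sup f$ and bounds the tail by $-f(u)/(1-\varepsilon)$ before letting $\varepsilon\to 0$, whereas you run the argument for both $f$ and $g$ with tail bound $-\|f\|_\infty$ and then absorb $\|f\|_\infty$ using $\lambda_1>4/(5\pi-4)$; both lead to the same $k_1$ and $k_2$.
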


\begin{proof}
In both cases, we may assume that $\|f\|_\infty=\sup_uf(u)$. Otherwise the lower bound  $f(u)\geq -\lambda_3$ would give the result in the first case (as $\lambda_3\leq k_2$) and in the second case, we could consider $g$ defined by $g(u)=-f(-u)$ instead of $f$. Furthermore, if $\|f\|_\infty \leq \lambda_2$, we are done as $\lambda_2\leq k_2$ in both cases.

So for the rest of the proof we assume, as we may, that $\|f\|_\infty=\sup_uf(u)>\lambda_2$. For $u\in\RR$, we estimate the integral
$$J(u):=\int_\RR \chi_{T}(v) f(u+ 5/T+v) dv,$$
where $T>0$ and $\chi_T$ is defined in \eqref{eq:chiFejer}.
Plancherel's formula and \eqref{fourierfejer} lead to the upper bound
\begin{align}\label{majJ}
|J(u)|&=\left|\frac{1}{2\pi}\int_{-\infty}^{\infty} \widehat{\chi_{T}}(\tau) \widehat{f}(\tau) e^{i( 5/T+u)\tau} d\tau\right|\leq \frac{1}{2\pi}\int_{-T}^{T} \left\vert \widehat{f}(\tau) \right\vert d\tau.
\end{align}

The integral $J(u)$ is real, and cutting the integral to parts over $[-5/T,5/T]$ and outside that, we find the lower estimation
\begin{equation}\label{minJ_gen}
J(u)\geq (1-I_5) \min_{v\in [0,10/T]} f(u+v) + I _5 \min_{t\not\in[u,u+10/T]} f(t),
\end{equation}
where $I_5$ is defined in \eqref{eq:Iesti}.

Now for any $0<\varepsilon<1-\la_2/\|f\|_\infty$ we take some $u\in \RR$ with $(\la_2<)$ $(1-\ve) \|f\|_\infty < f(u) \leq \|f\|_\infty$. For such a $u$, we have $f(u+v) \geq \lambda_1 f(u)$ for all $v\in[0,10/T]$ by assumption, thus \eqref{minJ_gen} gives
\begin{align}\label{minJ2_1}
J(u)\geq (1-I_5) \lambda_1f(u) + I _5 \min_{\RR} f
\end{align}
In Case 1, we use $\min_{\RR} f \geq -\lambda_3$ whereas in Case 2 we use $\min_{\RR} f \geq -\|f\|_\infty>-f(u)/(1-\ve)$.
Combining this with \eqref{minJ2_1} and \eqref{majJ}, we are led to
$$
f(u)\leq  \lambda_4 \int_{-T}^{T} \left\vert \widehat{f}(\tau) \right\vert d\tau+\lambda_5
$$
with
$$\left\{\begin{array}{ll}
\lambda_4&=\frac{1}{2\pi} ((1-I_5)\lambda_1)^{-1}\leq 5\left(2(5\pi-4)\lambda_1\right)^{-1}, \\ \lambda_5&=\lambda_3I_5 ((1-I_5)\lambda_1)^{-1}\leq 4(5\pi-4)^{-1} (\lambda_3/\lambda_1)
\end{array}\right.$$
in Case 1 and in Case 2
$$\left\{\begin{array}{ll}
\lambda_4&=\frac{1}{2\pi} ((1-I_5)\lambda_1-I_5/(1-\varepsilon))^{-1}\leq 5\left(2(5\pi-4)\lambda_1-8/(1-\varepsilon)\right)^{-1},\\ \lambda_5&=0
\end{array}\right.$$
where $\varepsilon$ is taken small enough so that $(5\pi-4)\lambda_1(1-\varepsilon)>4$. (Recall that in Case 2, $\lambda_1>4/(5\pi-4)$ by condition).

Taking into account also the already settled case when $\|f\|_\infty<\lambda_2$, and letting $\ve$ tend to $0$, we therefore get the announced inequalities.
\end{proof}
Now we introduce some integrals which we shall need in Sections \ref{sec:boundedness} and \ref{sec:multiplepoles}. Let $m, T$ be positive real numbers and $\si$ be a nonnegative real number. We consider the following quantities.
\begin{equation}\label{defwallis}
W_m(\si,T):=\int_{0}^T \frac{dt}{|\si+it|^{m}},\quad Z_m(\si,T):=\int_{0}^T \frac{dt}{|\si+it|^{m}|1+\si+it|}.
\end{equation}
The integrals $W_m(\si,T)$ satisfy the following properties:
\begin{equation}\label{Wsi}
W_m(\si,T)=\si^{1-m}W_m(1,T/\si);\qquad
\end{equation}
\begin{equation}\label{estWallis}
W_m(\si,T)\ll_m \begin{cases} \si^{1-m}& \mbox{ if }m>1\\ \log\left(1+ \frac{T}{\si}\right)& \mbox{ if }m=1\\ T\{\max(T,\si)\}^{-m}& \mbox{ if }m<1\end{cases}.
\end{equation}
When $m>1$, we also have
\begin{equation}\label{estwallis2}
W_m(1,T) < W_m(1,\infty) < \frac{m}{m-1}.
\end{equation}
Analogously, a standard calculation furnishes for $0<\si<1$ (with $\asymp$ standing for $\ll$ and $\gg$ together)
\begin{equation}\label{estZm}
Z_m(\si,1)   \asymp_m Z_m(\si,\infty)   \asymp_m \begin{cases} \si^{1-m}& \mbox{ if } ~ ~ m>1 \\ \log(1+1/\si)& \mbox{ if}~~ m=1\\ 1 & \mbox{ if} ~~ 0<m<1\end{cases}
\end{equation}
and
\begin{equation}\label{estZ0}
Z_0(\si,T)   \asymp\log(1+T).
\end{equation}

\bigskip

We finally state a lemma providing some upper bounds for a certain function $\beta$ and its differences.
\begin{lemme}\label{l:beta} For any $\omega>-1$ and $t\in\RR$, let $\beta(\omega,t)$ be defined by
\begin{equation}\label{eq:betadef}
\beta(\omega,t):=\left\{\begin{array}{ll} \frac{t^{\omega}}{\Gamma(\omega+1)}e^{-t}(1-e^{-t})&(t>0)\\0&(t\leq0).\end{array}\right.
\end{equation}
Then for $\omega>-1$, $x\in\RR$ and $y>0$, we have
\begin{equation}\label{eq:betaest}
\left\{\begin{array}{l} |\beta(\omega,x)| \leq 1/\sqrt{\pi}, \\|\beta(\omega,x+y)-\beta(\omega,x)|\leq y^{\om+1}+2y /\sqrt{\pi}. \end{array}\right.
\end{equation}
\end{lemme}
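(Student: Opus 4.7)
My plan is to prove the two inequalities separately, both resting on elementary estimates for $e^{-t}(1-e^{-t})$ combined with Stirling's lower bound $\Gamma(x+1) > \sqrt{2\pi x}(x/e)^x$ valid for $x > 0$.

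For the pointwise bound, since $\beta(\omega,\cdot) \geq 0$ I need only an upper estimate, and I use two complementary majorants. The inequality $1-e^{-t} \leq 1$ gives $\beta(\omega,t) \leq t^\omega e^{-t}/\Gamma(\omega+1)$, whose maximum over $t>0$ is $\omega^\omega e^{-\omega}/\Gamma(\omega+1) \leq 1/\sqrt{2\pi\omega}$ by Stirling, hence $\leq 1/\sqrt\pi$ once $\omega \geq 1/2$. The alternative $1-e^{-t} \leq t$ gives $\beta(\omega,t) \leq t^{\omega+1}e^{-t}/\Gamma(\omega+1)$ with maximum $(\omega+1)^{\omega+1}e^{-(\omega+1)}/\Gamma(\omega+1) \leq \sqrt{(\omega+1)/(2\pi)}$ (Stirling applied to $\Gamma(\omega+2)$ and the identity $\Gamma(\omega+2)=(\omega+1)\Gamma(\omega+1)$), hence $\leq 1/\sqrt\pi$ for $\omega \leq 1$. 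These two ranges together cover all $\omega > -1$.

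For the increment bound, I split on the size of $y$ and on the signs of $x$ and $x+y$. If $y \geq 1$, the triangle inequality and the pointwise bound give $|\beta(\omega,x+y)-\beta(\omega,x)| \leq 2/\sqrt\pi \leq 2y/\sqrt\pi$. If $y<1$ and $x+y \leq 0$, both terms vanish; if $y<1$ and $-y<x \leq 0$, only $\beta(\omega,x+y)$ contributes, and $1-e^{-(x+y)} \leq x+y \leq y$ yields $\beta(\omega,x+y) \leq y^{\omega+1}/\Gamma(\omega+1)$, dominated by $y^{\omega+1}$ whenever $\Gamma(\omega+1) \geq 1$, and on the residual range $\omega \in (0,1)$ by $y^{\omega+1}+2y/\sqrt\pi$ since $\Gamma(\omega+1)$ exceeds its positive minimum $\Gamma_{\min} \approx 0.886$, and $y^{\omega+1} \leq y$ (from $\omega \geq 0$, $y \leq 1$) gives $y^{\omega+1}(\Gamma(\omega+1)^{-1}-1) \leq y(1/\Gamma_{\min}-1) < 2y/\sqrt\pi$.

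The remaining subcase $x>0$, $y<1$ is the most delicate, and I will handle it by the fundamental theorem of calculus: $\beta(\omega,x+y) - \beta(\omega,x) = \int_x^{x+y}\beta'(\omega,t)\,dt$ with the explicit derivative
$$
\beta'(\omega,t) = \frac{t^{\omega-1}e^{-t}}{\Gamma(\omega+1)}\bigl[\omega(1-e^{-t}) - t(1-2e^{-t})\bigr] \qquad (t>0),
$$
whose bracket is bounded by $|\omega|+t$ via $|1-e^{-t}|,\,|1-2e^{-t}| \leq 1$. Integration then splits into two contributions: a piece proportional to $(x+y)^\omega - x^\omega$, which I will majorize so as to enter the $y^{\omega+1}$ term by subadditivity of $t\mapsto t^{\omega+1}$ (for $\omega \in (-1,0]$) or the mean value theorem (for $\omega>0$); and a piece proportional to $\int_x^{x+y} t^\omega e^{-t}\,dt$, controlled uniformly by Stirling to produce the $2y/\sqrt\pi$ term. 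The main obstacle will be absorbing the potential singularity of $\beta'$ at $t=0$ when $\omega \in (-1,0)$ cleanly into the $y^{\omega+1}$ headroom, while matching the gamma-function constants precisely enough to end up with $2/\sqrt\pi$ rather than a larger constant.
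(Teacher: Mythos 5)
Your pointwise bound $|\beta(\omega,x)|\le 1/\sqrt{\pi}$ is correct and is essentially the paper's own argument (split the range of $\omega$, use $1-e^{-t}\le t$ or $\le 1$, then Stirling), and your reductions of the increment bound for $y\ge 1$, for $x+y\le 0$, and for $-y<x\le 0$ are also fine. But the heart of the lemma --- the case $x>0$, $0<y<1$ --- is only a plan, and the plan's key estimate fails. Bounding the bracket in $\beta'(\omega,t)=\frac{t^{\omega-1}e^{-t}}{\Gamma(\omega+1)}\bigl[\omega(1-e^{-t})-t(1-2e^{-t})\bigr]$ by $|\omega|+t$ throws away the cancellation at the origin: the bracket behaves like $(\omega+1)t$ as $t\to 0^{+}$, while $|\omega|+t\to|\omega|$. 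Concretely, your majorant contains the piece $\frac{|\omega|}{\Gamma(\omega+1)}\int_x^{x+y}t^{\omega-1}e^{-t}\,dt$. For $-1<\omega<0$ this is of the order of $\frac{x^{\omega}-(x+y)^{\omega}}{\Gamma(\omega+1)}$, which tends to $+\infty$ as $x\to 0^{+}$ with $y$ fixed, so it cannot be absorbed into the fixed right-hand side $y^{\omega+1}+2y/\sqrt{\pi}$; and for $0<\omega<1$ it can be as large as about $y^{\omega}/\Gamma(\omega+1)$ (take $x$ small), which for small $y$ is far larger than $y^{\omega+1}+2y/\sqrt{\pi}$. Neither subadditivity of $t\mapsto t^{\omega+1}$ nor the mean value theorem can repair this, because the defect is in the majorant of $\beta'$ itself, not in how you integrate it; so the obstacle you flag at the end is not a matter of constant-chasing but of the estimate being genuinely insufficient on $\omega\in(-1,1)$, $x$ near $0$.

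What is needed --- and what the paper does --- is to keep the cancellation before taking absolute values and to split according to $\omega$. For $-1<\omega\le 0$, using $t(e^{-t}-1)\le 0$ together with $1-e^{-t}\ge te^{-t}$ gives $\beta'(\omega,t)\le(1+\omega)t^{\omega}$, which integrates over an interval of length at most $y$ (shifted to $[0,y]$ since $t^{\omega}$ is decreasing) to at most $y^{\omega+1}$; a companion lower bound of the form $-(c_1y^{\omega+1}+c_2y)$ comes from $1-e^{-t}\le t$ plus Stirling for the coefficients. For $0<\omega<2$ one gets $|\beta'(\omega,t)|\le(\omega+1)t^{\omega}+O(1/\sqrt{\pi})$, and for $\omega\ge 2$ simply $|\beta'(\omega,t)|=O(1/\sqrt{\pi})$, again via Stirling. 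Until the case $x>0$, $0<y<1$ is carried out with bounds of this sharper kind (and the constants tracked to land on $y^{\omega+1}+2y/\sqrt{\pi}$), your argument is incomplete.
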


\begin{proof}
In all of this argument we will repeatedly use that for any $\xi>0$
\begin{equation}\label{max1}
\max_{t>0} t^\xi e^{-t} = \xi^\xi e^{-\xi},
\end{equation}
 and (see \cite[\S 12.33]{WW})
 \begin{equation}\label{mingamma}
 \Gamma(\xi) > \xi^{\xi-1/2}{e}^{-\xi} \sqrt{2\pi}.
 \end{equation}
 We also recall the functional equation of $\Gamma$: for any $\xi\not\in\ZZ^{-}$, $\Gamma(\xi+1)=\xi\Gamma(\xi)$.
 Let $\omega>-1$ and $t$ be real numbers. First we give an upper bound for $|\beta(\omega,t)|$.

 If $t\leq 0$, then $\beta(\omega,t)=0$ so we will restrict to $t>0$, where $\beta(\om,t)>0$ and it suffices to estimate $\beta(\omega,t)$ from above.
\begin{itemize}
\item If $-1<\omega\leq 1$, we use $1-e^{-t}<t$ and \eqref{max1} and \eqref{mingamma} with $\xi=\omega+1$. We obtain
$$\beta(\om,t) <\frac{ t^{\om+1} e^{-t}}{\Gamma(\om+1)}\leq \frac{ (\om+1)^{\om+1} e^{-\om-1} }{ (\om+1)^{\om+1/2} e^{-\om-1} \sqrt{2\pi}} =\frac{\sqrt{\om+1}}{\sqrt{2\pi}} \leq \frac{ 1}{ \sqrt{\pi}}.$$
\item If $\omega>1$, we use $1-e^{-t}<1$ and \eqref{max1} and \eqref{mingamma} with $\xi=\omega$. We obtain
$$\beta(\om,t)<\frac{t^{\om}e^{-t}}{\Gamma(\om+1)}\leq \frac{ \om^\om e^{-\om} }{ \om \Gamma(\om)} \leq \frac{\om^\om e^{-\om} }{
\om^{\om+1/2} e^{-\om} \sqrt{2\pi}} = \frac{1}{\sqrt{2\pi\om}} < \frac{1}{\sqrt{2\pi}}.$$
\end{itemize}

Now consider the change of the function, hence the derivative of $\beta$ for $t>0$:
$$
\beta'(\om,t)=\frac{[2te^{-t}-t+\om(1-e^{-t})] t^{\om-1} e^{-t}}{\Gamma(\omega+1)}.
$$
\begin{itemize}
\item
For $-1<\om\leq 0$, we use $t(e^{-t}-1)\leq 0$ first and then $1-e^{-t}>te^{-t}$ and $e^{-t}<1$ to infer
$$
\beta'(\om,t)<\frac{[te^{-t}+\om(1-e^{-t})] t^{\om-1} e^{-t}}{\Gamma(\omega+1)}<\frac{(1+\om)t^{\om}}{\Gamma(\om+1)} \leq (1+\om) t^\om,
$$
as $\Gamma(\xi)\geq1$ when $0<\xi<1$. For $x\in\RR$ and $y>0$ this yields
$$
\beta(\omega,x+y)-\beta(\omega,x) \leq {1+\om}\int_{\max(0,x)}^{\max(0,{x+y})} t^{\om} dt < {1+\om} \int_0^{y}  t^{\om} dt \leq y^{\om+1}.
$$
Next, for a lower estimation using $te^{-t}>0$ and $1-e^{-t}<t$ we obtain
\begin{align*}
\beta'(\om,t)
&> \frac{-t(1-e^{-t})+\om t}{\Gamma(\omega+1)}t^{\om-1} e^{-t}>  \frac{(-t+\om )t^{\om} e^{-t}}{\Gamma(\om+1)}.
\end{align*}
As before, integration yields  for $x\in \RR$ and $y>0$
\begin{align*}
\beta(\omega,x+y)-\beta(\omega,x)
&\geq\frac{1}{\Gamma(\om+1)}\left(\frac{\om}{\om+1} y^{\om+1} - \left(\frac{\om+1}{e}\right)^{\om+1} y\right)
\end{align*}
where we used \eqref{max1} to estimate the last integral.
For the first coefficient, using that $\Gamma(\xi)$ is already increasing for $\xi\geq 2$, we get
$$\frac{\om}{(\om+1)\Gamma(\om+1)}=\frac{\om}{\Gamma(\om+2)}=\frac{\om(\om+2)}{\Gamma(\om+3)} > \frac{-1}{\Gamma(\om+3)}> \frac{-1}{\Gamma(2)}=-1.$$
For the second coefficient, we apply \eqref{mingamma} with $\xi=\om+1$ and get
$$\frac{1}{\Gamma(\om+1)} \left(\frac{\om+1}{e}\right)^{\om+1}\leq \sqrt{\frac{1+\om}{2\pi}}\leq\frac{1}{\sqrt{2\pi}}.$$
Whence for $-1<\om\leq 0$ we get $|\beta(\omega,x+y)-\beta(\omega,x)| \leq y^{\om+1} + y/\sqrt{2\pi}$.

\item Consider now the case $\om\geq 2$.
We use $(e^{-t}-1)<0$ and $1-e^{-t}<1$ and then we use \eqref{max1} and \eqref{mingamma}. We get
\begin{align*}
\beta'(\om,t)&<\frac{[te^{-t}+\om ] t^{\om-1} e^{-t}}{\Gamma(\omega+1)}<\frac{ t^{\om} e^{-2t}}{\om\Gamma(\omega)}+\frac{ t^{\om-1} e^{-t}}{(\om-1)\Gamma(\omega-1)}\\
&\leq 2^{-\om}\frac{1}{\sqrt{2\pi\om}}+\frac{1}{\sqrt{2\pi(\om-1)}}\leq\left(\frac{2^{-2}}{2}+\frac{1}{\sqrt{2}}\right)\frac{1}{\sqrt{\pi}}\leq\frac{1}{\sqrt{\pi}}.
\end{align*}
 This entails
$\beta(\omega,x+y)-\beta(\omega,x) < y/\sqrt{\pi}$.

From the other direction, we have
\begin{align*}
\beta'(\om,t)&\geq-\frac{e^{-t} t^{\om}}{\om\Gamma(\omega)}\geq -\frac{1}{\sqrt{2\pi\om}}\geq -\frac{1}{2\sqrt{\pi}}\end{align*}
thus
$\beta(\omega,x+y)-\beta(\omega,x) \geq -y/(2\sqrt{\pi})$.\\
Whence for $\om\geq 2$ we get $|\beta(\omega,x+y)-\beta(\omega,x)| \leq  y/\sqrt{\pi}$.

\item Finally, let us assume that $0<\om<2$.
Similarly as before, we use \eqref{max1} and \eqref{mingamma} to get for $t>0$
\begin{align*}
\beta'(\om,t)&\leq\frac{[te^{-t}+\om t] t^{\om-1} e^{-t}}{\Gamma(\omega+1)}=\frac{ t^{\om} e^{-2t}}{\Gamma(\omega+1)}+\frac{ t^{\om} e^{-t}}{\Gamma(\omega)}\\
&\leq\frac{ t^{\om} }{\Gamma(\omega+1)}+\sqrt{\frac{ \om}{2\pi}}\leq \frac{ (\om+1)t^{\om} }{\Gamma(\omega+2)}+\frac{1}{\sqrt{\pi}} \leq (\om+1)t^{\om}+\frac{1}{\sqrt{\pi}}.
\end{align*}
This entails
$\beta(\omega,x+y)-\beta(\omega,x) \leq y^{\omega+1} + y/\sqrt{\pi}$.
We also have
\begin{align*}
\beta'(\om,t)&\geq -\frac{ t^{\om+1} e^{-t}}{\Gamma(\omega+1)}\geq -\sqrt{\frac{\omega+1}{2\pi}}> -\frac{2}{\sqrt{\pi}}
\end{align*}
yielding
$\beta(\omega,x+y)-\beta(\omega,x) \geq -2y/\sqrt{\pi}$.
\end{itemize}
\end{proof}


\section{Boundedness derived from local assumptions on the Mellin-Stieltjes transform}\label{sec:boundedness}

We can find in the book of Korevaar, see \cite[Theorem III.10.1, p. 142 and Proposition III.10.2, p. 143]{Ko} some convergence and boundedness theorems involving general Tauberian conditions.
These results are of two kinds. If the Mellin-Stieltjes transform of $A$ is regular enough throughout the boundary, we get a convergence theorem, whereas if we have local regularity of the Mellin-Stieltjes transform near $s=1$, we only get a boundedness theorem. In this section we work out a more general "local theorem" allowing the Mellin-Stieltjes transform to have some singularities at $s=1$.


\begin{theoreme}\label{th:kleTfixe}
Let $m \geq 0$ and $T>0$ be fixed parameters and let $A:\RR\to \RR$ be a function of locally bounded variation vanishing on $(-\infty,1)$ such that
\begin{enumerate}
\item either we have that $A(x)/x$ is of bounded decrease on $[1,\infty)$ according to Definition \ref{def:Korevaar},
\item or we have that $A(x)/(x\log^{(m-2)_{+}}{x})$ is of bounded decrease on $[e,\infty)$ according to Definition \ref{def:Korevaar},
\item or $A$ satisfies the moderate decrease condition of Definition \ref{def:moderatedec} on $[1,+\infty)$.
\end{enumerate}

Assume further that the Mellin-Stieltjes transform $\AAA(s):=\int_1^\infty x^{-s}dA(x)$ converges in the half plane $\Re{s}>1$ and satisfy for all $0<\si<1$
\begin{equation}\label{eq:integralcondionA}
\int_0^{T} \frac{|\AAA(1+\si+it)|}{|1+\si+it|}dt \leq K_1 \left\{\begin{array}{ll}Z_m(\si,\infty) & \mbox{ if }m>0,\\Z_0(\si,T) & \mbox{ if }m=0\end{array}\right.
\end{equation}
where $Z_m$ is defined in \eqref{defwallis} and $K_1$ is some positive constant. 

Then we have $A(x) = O( x\log^{(m-1)_{+}}x)$ for all $x\geq e$ if $m\not=1$ and $A(x)=O(x\log\log{x})$ for all $x\geq e^e$ if $m=1$, where the implicit constant of the $O$ symbol depends explicitly on $m$, on $K_1$ and
\begin{itemize}
\item on $\left\|A(x)/x\right\|_{1\leq x \leq e^{1+50/T}}$ and on $\overline{\nu}(A(x)/x;e^{10/T})$ under Condition {\it 1},
\item on $\left\|A(x)/x\right\|_{1\leq x \leq e^{1+c/T}} $ with $c=10\max(5,1+(m-2)_+/\log{2})$ and on $\overline{\nu}(A(x)/(x\log^{(m-2)_{+}}x);e^{10/T})$ under Condition {\it 2},
\item on $\left\|A(x)/x\right\|_{1\leq x \leq e}$, on $T$ (more precisely, on $e^{10/T}$) and on the constants $B_1$, $B_2$ under Condition {\it 3}.
\end{itemize}
In case $m=0$, we also have a dependence on $T$ with both the boundedly and the moderately decreasing conditions.
\end{theoreme}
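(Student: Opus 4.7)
The plan is to apply Lemma \ref{lemf} to the auxiliary function
$$f_\sigma(u) := A(e^u) e^{-(1+\sigma)u} \quad (u \geq 0), \qquad f_\sigma(u) := 0 \quad (u < 0),$$
where $\sigma > 0$ is a small parameter to be tuned at the end to a value depending on the target point. By Remark \ref{l:convergenceetordre}, convergence of $\AAA(s)$ on $\Re s > 1$ yields $|A(x)| \ll_{\ve} x^{1+\ve}$, so $f_\sigma$ is bounded and of locally bounded variation. A standard integration by parts gives
$$\widehat{f_\sigma}(t) = \int_0^\infty A(e^u) e^{-(1+\sigma+it)u}\, du = \frac{\AAA(1+\sigma+it)}{1+\sigma+it},$$
and combining \eqref{eq:integralcondionA} with the symmetry $|\widehat{f_\sigma}(-t)| = |\widehat{f_\sigma}(t)|$ yields $\int_{-T}^T |\widehat{f_\sigma}(t)|\, dt \leq 2 K_1 Z_m(\sigma, \infty)$ for $m>0$, and $\leq 2K_1 Z_0(\sigma, T)$ for $m=0$.

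The key technical step is to verify hypothesis \eqref{condlemf} of Lemma \ref{lemf}. For $v \in [0, 10/T]$ one rewrites $f_\sigma(u+v) = e^{-\sigma(u+v)} A(e^{u+v})/e^{u+v}$; each of the three decrease conditions then translates into an inequality of the form $f_\sigma(u+v) \geq e^{-10\sigma/T} f_\sigma(u) - E_\sigma$, with additive error $E_\sigma = O(1)$ under Conditions \emph{1} and \emph{3}, and $E_\sigma = O(\sigma^{-(m-2)_+})$ under Condition \emph{2}. This yields \eqref{condlemf} with $\lambda_1 := e^{-10\sigma/T}/2$ and $\lambda_2 \ll E_\sigma$ (augmented to absorb the initial bound $\|A(x)/x\|$ over an interval $[1, e^{c/T}]$ of appropriate length $c$). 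Moreover $\lambda_1 > 4/(5\pi-4)$ as soon as $\sigma/T$ is sufficiently small, enabling Case~2 of the lemma.

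The choice between Case~1 and Case~2 of Lemma \ref{lemf} then depends on the decrease condition. Under Condition~\emph{3}, inequality \eqref{minF} gives $f_\sigma \geq -B$ uniformly, so Case~1 applies with $\lambda_3 := B$. Under Conditions \emph{1} or \emph{2} with $m > 2$, the lower bound \eqref{minslowdec} applied to the relevant quotient yields $f_\sigma \geq -C\sigma^{1-\max(2,m)}$ after maximising $u^{\al} e^{-\sigma u}$; this quantity is dominated by $Z_m(\sigma,\infty) \asymp \sigma^{1-m}$, so Case~1 still suffices. For Conditions \emph{1} or \emph{2} with $m \leq 2$ (they coincide, since $(m-2)_+ = 0$), I would use Case~2: the companion function $g(u) := -f_\sigma(-u)$ vanishes on $(0,\infty)$; on $[-10/T, 0]$ it is bounded by $\|A/x\|_{[1, e^{10/T}]}$, a bound absorbed in $\lambda_2$; and on $(-\infty, -10/T]$ the condition \eqref{condlemf} for $g$ translates, via the substitution $x := e^{-u}$, $y := e^{-(u+v)}$, into $-A(y)/y^{1+\sigma} \geq \lambda_1 (-A(x)/x^{1+\sigma})$ for $y \leq x \leq e^{10/T} y$, valid whenever $-A(x)/x \geq N/(1-\lambda_1)$ with $N := \overline{\nu}(A(x)/x; e^{10/T})$; this latter threshold is absorbed in $\lambda_2$, and the inequality itself is an immediate consequence of bounded decrease of $A(x)/x$.

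Applying Lemma \ref{lemf} yields $\|f_\sigma\|_\infty \leq C_1 Z_m(\sigma, \infty) + C_2$ (or the analogous inequality with $Z_0(\sigma,T)$ for $m=0$). Setting $\sigma := 1/u_0$ with $u_0 := \log x_0$ at the target point gives $f_\sigma(u_0) = e^{-1} A(x_0)/x_0$, hence $|A(x_0)|/x_0 \leq e\|f_\sigma\|_\infty$; the asymptotics \eqref{estZm}--\eqref{estZ0} for $Z_m$ then produce the announced bounds: $O(\log^{m-1} x_0)$ for $m>1$, $O(\log \log x_0)$ for $m=1$, $O(1)$ for $0<m<1$, and $O(\log(1+T))$ (i.e., $O(1)$ in $x_0$) for $m=0$. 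The main obstacle I anticipate is the Case~2 verification of \eqref{condlemf} for $g$: one must translate a right-ward bounded-decrease condition on $A(x)/x$ into a left-ward comparison between $-A(y)/y^{1+\sigma}$ and $-A(x)/x^{1+\sigma}$, carefully splitting into the regime where $-A(x)/x$ is small (absorbed in $\lambda_2$) versus where it exceeds the threshold $N/(1-\lambda_1)$, and tracking the initial-value contributions $\|A(x)/x\|$ on the appropriate compact interval that enter the final constants.
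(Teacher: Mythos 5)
Your proposal is correct and follows essentially the same route as the paper's proof: the same weighted function $h_\sigma(u)=A(e^u)e^{-(1+\sigma)u}$ with $\widehat{h_\sigma}(\tau)=\AAA(1+\sigma+i\tau)/(1+\sigma+i\tau)$, verification of \eqref{condlemf} from the decrease hypotheses with the threshold $\lambda_2$ absorbing $\overline{\nu}$ and initial-segment norms, an application of Lemma \ref{lemf} (Case 1 under moderate decrease, Case 2 under bounded decrease), the bound \eqref{eq:integralcondionA} together with \eqref{estZm}--\eqref{estZ0}, and finally $\sigma=1/\log x$. Your only deviation --- invoking Case 1 with $\lambda_3\asymp\sigma^{1-m}$ for Conditions \emph{1}--\emph{2} when $m>2$, where the paper runs the Case 2 backward comparison for the $\log^{(m-2)_{+}}$-weighted quotient --- is sound, since that a priori lower bound is dominated by the target $Z_m(\sigma,\infty)\asymp\sigma^{1-m}$.
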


\begin{remark}\label{rem:retro}
In retrospect we can observe that \emph{under the condition of \eqref{eq:integralcondionA}} the first assumption, i.e. that  "$A(x)/x$ is boundedly decreasing", is stronger (more restrictive) than the second one stating "$A(x)/(x\log^{{(m-2)}_{+}}{x})$ is boundedly decreasing". Indeed, if the first assumption is satisfied, then the above theorem furnishes the $O$-result which in turn entails this second assumption as noticed in Remark \ref{weakas}.
\end{remark}

\begin{proof}
Concerning the bounded decrease condition, the case $m=0$ is given in the book of Korevaar, see \cite[Proposition III.10.2, (i) p. 143]{Ko}. We will generalize his arguments in this proof.\\
For $\sigma>0$, we define the function $h_\sigma$ on $\RR$ by $h_\sigma(t):= e^{-(1+\sigma) t}A(e^t)$. According to Remark \ref{l:convergenceetordre}, the function $h_{\sigma}$ is bounded.
To prove the theorem, it is enough to prove with $\mu:=(m-1)_{+}$ and some $\si_0\in(0,1)$ that it holds
\begin{equation}\label{boundh}
\|h_{\si}\|_{\infty}\ll \left\{\begin{array}{cc} \sigma^{-\mu} &\mbox{ if }m\not=1\\ \log(1+1/\sigma) & \mbox{ if }m=1 \end{array}\right. \quad (0<\sigma<\sigma_0 ).
\end{equation}
Indeed, with $t:=\log x$, we have $A(x)=h_{1/t}(t)e^{t+1}$, thus  \eqref{boundh} implies for $x>e^{1/\si_0}$,
$$A(x)\ll \begin{cases}  x\log^{\mu}x &\mbox{ if }m\not=1\\ x\log\log{x}& \mbox{ in case }m=1\end{cases},$$
while for $x\leq e^{1/\si_0}$, local boundedness of $A$ ensures the statement provided that we allow a dependence of the implied constant on $\|A(x)/x\|_{[1,e^{1/\si_0}]}$.

For a parameter $\lambda\geq0$, to be fixed later, we define the sets $S_\mu^{\pm}(\lambda)$ as
$$
S_\mu^{+}(\lambda):=\{ t\geq 0 ~:~ A(e^t) >\lambda e^t t^{\mu}\}, \quad S_\mu^{-}(\lambda):=\{ t\geq 0 ~:~ A(e^t) <-\lambda e^t t^{\mu}\}.
$$
If $t\geq 0$ and $t\not\in S_\mu(\lambda):=S_\mu^+(\lambda)\cup S_\mu^-(\lambda)$, then with the convention $0^0=1$
$$
| h_\si(t)|\leq  \lambda t^{\mu} e^{-\si t}\leq \frac{\lambda}{\si^{\mu}}\|e^{-v}v^{\mu}\|_{\RR^+}=\frac{\lambda}{\si^{\mu}} \left(\frac{\mu}{e}\right)^{\mu} \left( \ll_{\mu,\lambda} \si^{-\mu} \right).
$$
Now we put $\mu_0=(\mu-1)_{+}=(m-2)_{+}$ under Condition {\it 2} and $\mu_0=0$ under Conditions {\it 1} and {\it 3}.

\begin{itemize}
\item Under Condition {\it 1} and {\it 2}, we put
\begin{equation}\label{deflambda2}
\lambda_2:=\max\left\{ \left\|\frac{A(x)}{x}\right\|_{\left[1,\exp(1+\frac{10}{T} \max\{5,\frac{\mu_0}{\log 2}+1\})\right]} , ~ \lambda \left(\frac{\mu}{e}\right)^{\mu} \si^{-\mu} \right\}.
\end{equation}
Let $u$ be a real number such that $|h_\sigma(u)|>\lambda_2$.\\
In view of $|h_\si(u)|<|A(e^u)|/e^u$, we necessarily have $u\geq 1+50/T$. Moreover, if $\theta$ is the sign of $h_\sigma(u)$, then $u\in S_\mu^\theta(\lambda)$ and, as obviously $S_{\mu}^\theta(\lambda)\cap[1,+\infty)\subset S_{\mu'}^\theta(\lambda)$  for any $\mu'\leq \mu$, we have as well $u\in S_{\mu_0}^\theta(\lambda)$.


By condition of bounded decrease we have for $e \leq x\leq y\leq e^{10/T} x$
\begin{equation}\label{xy}
\frac{A(y)}{y(\log y)^{\mu_0}} - \frac{A(x)}{x(\log x)^{\mu_0}} \geq  -\overline{\nu}(e^{10/T}),
\end{equation}
where $\overline{\nu}(e^{10/T}):= \overline{\nu}(A(x)/(x\log^{\mu_0}x);e^{10/T})$.\\
Let $v$ be a real number satisfying $0\leq v\leq 10/T$.
We apply \eqref{xy} with $(x,y)=(e^u,e^{u+v})$ if $h_\si(u)>0$ or $(x,y)=(e^{u-v}, e^u)$ if $h_\si(u)<0$. Note that $u\geq 1+50/T>1$ and $u-v \geq 1+40/T>1$, so in case $\lambda>0$, we infer for any $u$ and $v$ as above
\begin{align*}
\theta h_\si(u+\theta v)&=\theta \frac{(u+\theta v)^{\mu_0}}{e^{\si(u+\theta v)}}\left(\frac{A(e^{u+\theta v})}{e^{u+\theta v}(u+\theta v)^{\mu_0}}-\frac{A(e^{u})}{e^{u}u^{\mu_0}}+\frac{A(e^{u})}{e^{u}u^{\mu_0}} \right)\\
& \geq \theta e^{-\si(u+\theta v)}\left( -\theta \overline{\nu}(e^{10/T}) +\frac{A(e^{u})}{e^{u}u^{\mu_0}} \right){(u+\theta v)^{\mu_0}}\\
& \geq \theta e^{-\si(u+\theta v)}\left(1-\frac{\overline{\nu}(e^{10/T})}{\lambda}\right) \frac{A(e^u){(u+\theta v)^{\mu_0}}}{e^{u}u^{\mu_0}}\\
& \geq e^{-\theta \si v}\left(1-\frac{\overline{\nu}(e^{10/T})}{\lambda}\right)|h_\sigma(u)|\left(\frac{u+\theta v}{u}\right)^{\mu_0}.
\end{align*}
Now we chose the parameters $\lambda$ and $\si_0$ as $\lambda:=5 \overline{\nu}(e^{10/T})$ and $\si_0:=\min(1,\frac{\log 2}{10} T)$. \\
When $\theta=+$, we obtain from the above for any $0<\si <\si_0$ the estimate
\begin{equation}\label{minh+}
h_\si(u+v) \geq \frac45 e^{-\si 10/T} h_\sigma(u)\geq \frac25 h_\sigma(u).
\end{equation}
Similarly, for $\theta=-$, using $u > (1+\mu_0/\log 2)10/T$ we obtain for $\si>0$
\begin{equation}\label{minh-}
- h_\si(u-v) \geq e^{\si v} \frac45 |h_\sigma(u)| \left(\frac{u-10/T}{u} \right)^{\mu_0}  \geq
\frac25 \left(-h_\sigma(u)\right),
\end{equation}
because $1+w\leq e^w$ implies $\frac{q+1}{q}\leq \exp(\frac{1}{q})$ and $\frac{q}{q+1}\geq \exp(\frac{-1}{q})$ which  yields
$$
\left(\frac{u-10/T}{u} \right)^{\mu_0}  \geq \left(\frac{\mu_0/\log 2}{\mu_0/\log 2 +1} \right)^{\mu_0} \geq \exp\left(\frac{-\mu_0}{\mu_0/\log 2} \right)= \frac12.
$$
Inequalities \eqref{minh+} and \eqref{minh-} remain true when $\overline{\nu}(e^{10/T})=0$ and hence $\lambda=0$.\\
We have just proved that if $0<\si<\si_0=\min(1,\frac{\log 2}{10} T)$, then for any $u$ such that $\theta h_\si(u)>\lambda_2$ and any $v\in[0,10/T]$, we have $\theta h_\si(u+\theta v)\geq \frac25 \theta h_\si(u)$.
So choosing $\lambda_1:=2/5$ and noting that $2/5> 4/(5\pi-4)$, an application of the second part of Lemma \ref{lemf} to $h_\si$ leads to
\begin{equation}\label{eq:hsigmatransf}
\|h_\si\|_\infty\ll \int_{-T}^{T} \left\vert \widehat{h_\si}(\tau) \right\vert d\tau+\lambda_2 \qquad (0<\si<\si_0),
\end{equation}
with an absolute implicit constant.

\item Assume now that Condition {\it 3} holds. We take  $\lambda>0$.
\\
Let $u$ be a real number such that $h_\sigma(u)>\lambda$, then $u\in S_0^+(\lambda)$.
By condition of moderate decrease, we have for any $v\geq 0$
$$
A(e^{u+v})-A(e^u) \geq-B_1(e^v-1)e^u-B_2\varphi(e^u) e^u.
$$
If $v\in[0,10/T]$ and $\lambda>0$, we can infer
\begin{align*}
h_\si(u+v)&=e^{-\si(u+v)}\left( \frac{A(e^{u+v})-A(e^u)}{e^{u+v}}+e^{-v}\frac{A(e^u)}{e^{u}}\right)
 \\& \geq e^{-\si(u+v)}\left( -B_1(1-e^{-v})-B_2e^{-v}\varphi(e^u)  +e^{-v}\frac{A(e^u)}{e^{u}}\right)\\
 &\geq e^{-\si(u+v)-v}\left( -\max(B_1,B_2)e^{10/T} + \frac{A(e^u)}{e^{u}}\right)\\
 & \geq e^{-(\si+1) v}\left(1-\frac{\max(B_1,B_2)e^{10/T}}{\lambda}\right) h_\sigma(u).
\end{align*}
Choosing $\lambda:=5\max(B_1,B_2)e^{10/T}$, we get for $0<\si<1$ and $u$ and $v$ as above
\begin{equation}\label{minhplus}
h_\si(u+v)\geq \frac45e^{-(\si+1) 10/T} h_\sigma(u)
\geq \frac45e^{-20/T} h_\sigma(u).
\end{equation}
This last inequality remains valid when $\lambda=0$, too.

Now we use \eqref{minF} to see that $h_{\si} (u)>-B$ with $B:=B(A,1)$. Finally, we apply the first part of Lemma \ref{lemf} with $\lambda_2=\lambda$,
$\lambda_1=\frac45e^{-20/T} $ and $\lambda_3=B$. Thus we get for $0<\si<1$,
$$
\|h_\si\|_\infty\ll \int_{-T}^{T} \left\vert \widehat{h_\si}(\tau) \right\vert d\tau+\lambda_2
$$
where the implicit constant depends on $e^{20/T}$ and $B$.
\end{itemize}

Since in all cases $\la_2\ll \si^{-\mu}$, it remains to estimate the integral $\int_{-T}^{T} \left\vert \widehat{h_\sigma}(\tau) \right\vert d\tau$. As
\begin{equation}\label{eq:handA}
\widehat{h_{\sigma}}(\tau)=\frac{\AAA(1+\si+i\tau)}{(1+\si+i\tau)},
\end{equation}
using the hypothesis \eqref{eq:integralcondionA} on $\AAA$, we get
$$
\int_{-T}^{T} \left\vert \widehat{h_\sigma}(\tau) \right\vert d\tau =\int_{-T}^{T}  \left|\frac{\AAA(1+\si+i\tau) }{1+\si+i\tau}\right|d\tau \ll \left\{\begin{array}{ll}Z_m(\si,\infty) & \mbox{ if }m>0,\\Z_0(\si,T) & \mbox{ if }m=0.\end{array}\right.
$$
Applying \eqref{estZm} and \eqref{estZ0} gives \eqref{boundh}, whence the result.
\end{proof}

We shall now discuss the condition \eqref{eq:integralcondionA} on $\AAA$. First, we give some more natural but stronger  condition under which the conclusion holds.

\begin{corollary}\label{c:meromorphiclocal}
Under the same conditions than in Theorem \ref{th:kleTfixe} but replacing \eqref{eq:integralcondionA} by the fact that the Mellin-Stieltjes transform $\AAA$ has a meromorphic continuation in a neighborhood of $s=1$, with a pole of order $m \in \NN$ (i.e., a regular point if $m=0$) at $s=1$, we have $A(x) = O( x\log^{(m-1)_{+}}x) $ for all $x\geq e$.
\end{corollary}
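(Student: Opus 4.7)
The plan is to reduce the corollary directly to Theorem \ref{th:kleTfixe} by showing that the local meromorphic hypothesis implies the integral estimate \eqref{eq:integralcondionA} for at least one admissible value of $T$ and some constant $K_1$. Since the conclusion of Theorem \ref{th:kleTfixe} does not depend on $T$, it suffices to verify \eqref{eq:integralcondionA} for one conveniently small choice of $T$.

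Fix $R > 0$ so that $\AAA$ is meromorphic on the open disk $D(1, R) := \{s \in \CC : |s-1| < R\}$ with a single pole of order $m$ at $s = 1$ (read as holomorphy when $m=0$). Partial-fraction decomposition yields
$$
\AAA(s) = \sum_{k=1}^{m} \frac{c_k}{(s-1)^{k}} + H(s) \qquad (s \in D(1, R)),
$$
where $c_1, \ldots, c_m \in \CC$ and $H$ is holomorphic on $D(1, R)$, hence bounded by some constant $M$ on the closed sub-disk $\overline{D(1, R/2)}$. Fix $T := R/4$ as the parameter in Theorem \ref{th:kleTfixe} and split the range $\si \in (0, 1)$ into $\si \leq R/4$ and $\si > R/4$.

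For $\si \in (0, R/4]$, every point $1 + \si + it$ with $t \in [0, T]$ lies in $D(1, R/2)$, so the decomposition applies and, using notation \eqref{defwallis}, gives
$$
\int_0^T \frac{|\AAA(1 + \si + it)|}{|1 + \si + it|}\, dt \;\leq\; \sum_{k=1}^{m} |c_k|\, Z_k(\si, T) + M \cdot Z_0(\si, T).
$$
By \eqref{estZm} and \eqref{estZ0}, for $m \geq 1$ each $Z_k(\si, T)$ with $1 \leq k \leq m$ is dominated by $Z_m(\si, \infty)$ (since $\si \leq 1$ gives $\si^{1-k} \leq \si^{1-m}$ for $k \leq m$), and $Z_0(\si, T) = O(1)$ since $T$ is fixed, so the whole right-hand side is $O(Z_m(\si, \infty))$; for $m = 0$ the sum is empty and the remainder is exactly $M \cdot Z_0(\si, T)$, as required. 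For $\si \in (R/4, 1)$ and $t \in [0, T]$, the point $1 + \si + it$ varies over a fixed compact subset of $\{\Re s > 1\}$, on which the holomorphic function $\AAA$ is bounded by some $M' < \infty$; the integral in question is then at most $M' T$, dominated by a constant times $Z_m(\si, \infty)$ when $m \geq 1$ (since $Z_m(\si, \infty)$ stays bounded away from $0$ in this range) and by a constant times $Z_0(\si, T) \asymp \log(1+T)$ when $m = 0$.

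Combining the two ranges yields \eqref{eq:integralcondionA} with some explicit $K_1 > 0$, and Theorem \ref{th:kleTfixe} then furnishes the stated order estimate on $A$. The only delicate step is the uniformity of the integral bound in $\si \in (0, 1)$: since the meromorphic continuation is only local, the partial-fraction estimate is valid only for $\si$ small (where the strip of integration remains inside the disk of meromorphy), and must be complemented by a compactness argument for the remaining range of $\si$; this is precisely what the splitting at $\si = R/4$ accomplishes.
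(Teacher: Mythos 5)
Your reduction of the local meromorphy hypothesis to the integral condition \eqref{eq:integralcondionA} is correct and is essentially the paper's first step (the paper uses the pointwise bound $|\AAA(s)|\ll |s-1|^{-m}$ near $s=1$; your partial-fraction version with the splitting at $\si=R/4$ just makes the same estimate explicit, and fixing one admissible $T$ is indeed enough). However, there is a genuine gap at $m=1$. The conclusion of Theorem \ref{th:kleTfixe} in the case $m=1$ is only $A(x)=O(x\log\log x)$, not $A(x)=O(x)$, whereas the corollary asserts $A(x)=O\bigl(x\log^{(m-1)_+}x\bigr)=O(x)$ when $m=1$. So your final sentence ``Theorem \ref{th:kleTfixe} then furnishes the stated order estimate on $A$'' fails precisely for a simple pole, which is the most important case in applications.

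The paper closes this gap by a subtraction argument: if $s=1$ is a simple pole with residue $r$, apply the $m=0$ case of the theorem to $A(x)-rx$ instead of $A(x)$. One checks that the Tauberian hypotheses are inherited --- together with $A(x)/x$ also $(A(x)-rx)/x$ is boundedly decreasing, and together with $A(x)$ also $A(x)-rx$ is moderately decreasing (note that for $m=1$ Condition \emph{2} coincides with Condition \emph{1}) --- while the Mellin--Stieltjes transform of $A(x)-rx$ is $\AAA(s)$ minus the polar term, hence regular at $s=1$, so the $m=0$ bound gives $A(x)-rx=O(x)$ and therefore $A(x)=O(x)$. You would need to add this step (or some equivalent device) to obtain the stated conclusion for $m=1$; for all other $m\in\NN$ your argument is complete.
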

\begin{proof}
Developing $\AAA$ in a neighborhood of $s=1$ gives for some $T>0$,
\begin{equation}\label{eq:ptwisecondionA}
|\AAA(s)|\ll \frac{1}{|s-1|^{m}}\quad (1<\Re(s)<2,\quad |\Im(s)|\leq T).
\end{equation}
 Clearly, the condition \eqref{eq:ptwisecondionA} entails \eqref{eq:integralcondionA}, hence the result follows except for $m=1$.\\
In case of a simple pole with residue $r$, however, we can apply this result to $A(x)-rx$ with $m=0$. To this note that together with $A(x)/x$ also $(A(x)-rx)/x$ is boundedly decreasing, and with $A(x)$ also $A(x)-rx$ is moderately decreasing, while the Mellin-Stieltjes transform of $A(x)-rx$ is $\AAA(s)-r/s$, that is regular. So under either assumptions on the controlled decrease of $A$, we get $A(x)-rx=O(x)$, thus $A(x)=O(x)$.
\end{proof}


In the effective Wiener-Ikehara Theorem (Theorem \ref{ikin}), the condition to get an estimate for $A(x)$ is
\begin{equation}\label{eq:firstetadef}
\eta(\si,T):=\int_0^T \left|\frac{\AAA(1+\si+it)}{1+\si+it} - \frac{\AAA(1+2\si+it)}{1+2\si+it}\right| dt\ll \si^{-(m-1)_+}
\end{equation}
for some $T>0$ and for any $0<\si<1$.
For concrete Laplace transforms the expression in Condition \eqref{eq:integralcondionA} is larger (hence the condition is stronger), than $\eta(\si,T)$
because we usually find considerable cancelation in the latter. By the triangle inequality, if \eqref{eq:integralcondionA} holds, then we also have
$$\eta(\si,T)\ll \begin{cases}Z_m(\si,\infty)+Z_m(2\si,\infty)\asymp Z_m(\si,\infty)&\mbox{ if }m>0\\Z_m(\si,T)+Z_m(2\si,T) \asymp Z_m(\si,T)&\mbox{ if }m=0\end{cases}.$$
The following lemma states that the converse is also true.

\begin{lemma}\label{l:Aandetacompare}
Let $A:\RR\to \RR$ be a function of locally bounded variation vanishing on $(-\infty,1)$ and assume that the Mellin-Stieltjes transform $\AAA(s):=\int_1^\infty x^{-s}dA(x)$ converges in the half plane $\Re{s}>1$.\\
Assume further that for some fixed nonnegative value of $T$, there exist some positive constants $\mu$ and $k$ such that $\eta(\si,T)\leq k \si^{-\mu}$ for $0<\si<1$. \\
Then there exists a positive constant $K'$ such that for all $\si\in(0, 1)$ we have the estimate
\begin{equation}\label{eq:intAcompareeta}
\int_0^T \left|\frac{\AAA(1+\si+it)}{1+\si+it}\right| dt \leq K' Z_{\mu+1}(\si,\infty).
\end{equation}
If $\mu>0$, we have $K'=kk'$ with $k'$ depending explicitly only on $\mu$. \\
In case $\mu=0$ we can make the constant $K'$ explicit if either Condition \it{1} or {\it 2} for some $m>0$ or Condition \it{3} of controlled decrease in Theorem \ref{th:kleTfixe} holds. In this case the constant $K'$ depends explicitly on the parameters $k$, $T$ and $\AAA(2)$  and on the constant $M(A,1)$ under Condition {\it 1}, or on $M(A/\log^{(m-2)_+},e)$, $m$ and $\|A\|_{[1,e]} $ under Condition {\it 2}, or on $B(A,1)$ under Condition {\it 3}.
\end{lemma}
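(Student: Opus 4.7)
The plan is to build the integral on the left-hand side of \eqref{eq:intAcompareeta} as a telescoping dyadic sum whose individual blocks are precisely the ones bounded by $\eta$. For every positive integer $N$ the trivial identity
\begin{equation*}
\frac{\AAA(1+\si+it)}{1+\si+it} = \sum_{n=0}^{N-1}\!\left(\frac{\AAA(1+2^n\si+it)}{1+2^n\si+it}-\frac{\AAA(1+2^{n+1}\si+it)}{1+2^{n+1}\si+it}\right) + \frac{\AAA(1+2^N\si+it)}{1+2^N\si+it}
\end{equation*}
holds, and after taking moduli, integrating over $t\in[0,T]$ and applying the triangle inequality,
\begin{equation*}
\int_0^T \left|\frac{\AAA(1+\si+it)}{1+\si+it}\right|dt \le \sum_{n=0}^{N-1}\eta(2^n\si,T) + L_N(\si),
\end{equation*}
where $L_N(\si)$ denotes the integral of the absolute value of the remainder. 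I would choose the iteration depth $N=N(\si):=\lceil\log_2(1/\si)\rceil$, the smallest integer that forces $2^N\si\in[1,2)$; this is exactly the range on which the standing hypothesis $\eta(\si',T)\le k(\si')^{-\mu}$ remains applicable to every $n=0,1,\dots,N-1$.

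Summing the resulting geometric progression $\sum_{n=0}^{N-1}k(2^n\si)^{-\mu}$ handles the bulk of the estimate at once. When $\mu>0$ it is majorized by $k\si^{-\mu}/(1-2^{-\mu})$, a constant of the announced shape $kk'(\mu)$; when $\mu=0$ it collapses to $kN\ll k\log(1+1/\si)$, which matches $Z_1(\si,\infty)$ up to an absolute constant via \eqref{estZm}. In every case this already produces the main term of the order $Z_{\mu+1}(\si,\infty)$ on the right of \eqref{eq:intAcompareeta}.

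What remains is the residual $L_N(\si)=\int_0^T|\AAA(1+\tilde\si+it)/(1+\tilde\si+it)|\,dt$ with $\tilde\si:=2^N\si\in[1,2)$, i.e.\ a vertical integral on the strip $\Re s\in[2,3)$. For $\mu>0$ I would push the telescoping to infinity: Remark \ref{l:convergenceetordre} yields $A(x)\ll_\varepsilon x^{1+\varepsilon}$, so integration by parts gives $\AAA(s)/s\to0$ as $\Re s\to\infty$ pointwise in $\Im s$, and hence $\AAA(1+\tilde\si+it)/(1+\tilde\si+it)=\sum_{n\ge 0}\bigl(\AAA(1+2^n\tilde\si+it)/(1+2^n\tilde\si+it)-\AAA(1+2^{n+1}\tilde\si+it)/(1+2^{n+1}\tilde\si+it)\bigr)$; combined with the fact that $Z_{\mu+1}(\si,\infty)$ is bounded below uniformly on $[1,2)$ and with one last application of the hypothesis at $\si=\tilde\si/2\in[1/2,1)$, this controls $L_N(\si)$ by a term of the form $kk''(\mu)$. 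For $\mu=0$ the tail does not close up on itself, and one must instead invoke Theorem \ref{th:kleTfixe} with $m=0$: under Condition \textit{1} with the control furnished by $M(A,1)$, under Condition \textit{2} with $m$, $M(A/\log^{(m-2)_+},e)$ and $\|A\|_{[1,e]}$, or under Condition \textit{3} with $B(A,1)$, the theorem gives $A(x)=O(x)$ with an explicit constant, whence integration by parts bounds $|\AAA(s)/s|$ uniformly on $\Re s\in[2,3)$ and yields $L_N(\si)\ll T$ with the explicit constants announced (where $\AAA(2)$ also enters via the trivial estimate $|\AAA(s)|\le|\AAA(2)|+\int_2^{\Re s}|\AAA'(u+it)|\,du$). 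Since $Z_1(\si,\infty)\gg1$ for $\si\in(0,1)$, this $O(T)$ contribution is absorbed into the dominant logarithmic term.

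The chief obstacle I foresee is precisely the $\mu>0$ case of the residual: turning the qualitative statement \textit{``$\AAA(s)/s$ is bounded and goes to $0$ as $\Re s\to\infty$''} into a quantitative bound whose constant can be honestly written as $kk'(\mu)$, without letting any $A$-dependent quantity slip in. The careful bookkeeping at the boundary $\Re s=2$, where the hypothesis on $\eta$ only barely applies (via continuity in $\si$ at $\si=1^-$), is the delicate point of the argument.
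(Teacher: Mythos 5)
Your dyadic telescoping with $N=\lceil\log_2(1/\si)\rceil$ and the geometric-sum estimate of $\sum_n \eta(2^n\si,T)$ is exactly the paper's skeleton, but the way you close off the residual term contains a genuine gap, most seriously in the $\mu=0$ case. There you propose to invoke Theorem \ref{th:kleTfixe} with $m=0$ to get $A(x)=O(x)$ and hence a uniform bound on $|\AAA(s)/s|$ on $\Re s\in[2,3)$. This is circular: Theorem \ref{th:kleTfixe} needs, besides the decrease condition, the hypothesis \eqref{eq:integralcondionA}, i.e.\ precisely a bound on $\int_0^T|\AAA(1+\si+it)/(1+\si+it)|\,dt$ (for $m=0$ even a bound uniform in $\si$), which is the very estimate Lemma \ref{l:Aandetacompare} is meant to supply so that Corollary \ref{c:localbddnessbyeta} can then apply Theorem \ref{th:kleTfixe}. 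Nor can the decrease conditions alone yield an upper bound $A(x)=O(x)$ -- any nondecreasing function is moderately decreasing, so Conditions 1--3 only control $A$ from below. The paper's proof avoids any upper bound on $A$: it uses the one-sided estimates \eqref{minslowdec} and \eqref{minF} (giving $A(x)>-k^{*}x(1+\log x)^{1+\mu_0}$ with the explicit constants $M(A,1)$, $M(A/\log^{(m-2)_+},e)+\|A\|_{[1,e]}$ or $B(A,1)$), writes $|A(e^u)|=A(e^u)+2A(e^u)_{-}$, and evaluates $\int_0^\infty A(e^u)e^{-2u}\,du=\AAA(2)/2$; that identity, not an a priori bound on $|\AAA|$, is how $\AAA(2)$ enters the explicit constant.

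Your treatment of the $\mu>0$ residual is also not yet a proof, as you half-concede. After rescaling to $\tilde\si=2^N\si\in[1,2)$, the infinite telescoping of $L_N(\si)$ produces the terms $\eta(2^n\tilde\si,T)$ with $2^n\tilde\si\ge 1$, which lie entirely outside the range $0<\si<1$ of the hypothesis, and ``one last application of the hypothesis at $\tilde\si/2$'' only bounds the block joining $\tilde\si/2$ to $\tilde\si$, not those above $\tilde\si$; so the claimed bound $kk''(\mu)$ with no $A$-dependence does not follow from what you wrote. The paper's route is simpler: for $\mu>0$ it keeps the original scale, lets $N\to\infty$, and kills the remainder outright via $\|\widehat{h_{2^N\si}}\|_\infty\le\|h_{2^N\si}\|_1\to 0$ (Remark \ref{l:convergenceetordre} with $\ve=1$ and monotone convergence), so no finite-scale residual ever needs to be estimated and the constant stays of the form $kk'(\mu)=k/(1-2^{-\mu})$. (Admittedly this uses the dyadic bound at all scales $2^n\si$, including those $\ge 1$ -- a point the paper glosses over -- but your alternative does not repair this and additionally imports the circular use of Theorem \ref{th:kleTfixe}.) To fix your write-up, drop the appeal to Theorem \ref{th:kleTfixe} and reproduce the one-sided-bound argument with $|A|=A+2A_{-}$ and $\AAA(2)$ for $\mu=0$, and for $\mu>0$ dispose of the remainder by the $\|h_\theta\|_1\to 0$ limit rather than by a second telescoping at scale $\tilde\si$.
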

\begin{proof} First let $\mu>0$. By a repeated application of the triangle inequality and using \eqref{eq:handA} we obtain for arbitrary $N\in \NN$
\begin{align*}
\int_0^T \left|\frac{\AAA(1+\si+it)}{1+\si+it}\right| dt & \leq \sum_{n=0}^{N-1} \eta(2^n\si,T) +  \int_0^T \left|\frac{\AAA(1+2^N\si+it)}{1+2^{N}\si+it}\right| dt \\&\leq \sum_{n=0}^{N-1} k (2^n\si)^{-\mu} + \int_0^T |\widehat{h_{2^N\si}}(t)|dt
\\& \leq \frac{k}{1-2^{-\mu}}~{\si^{-\mu}} +T \|\widehat{h_{2^N\si}}\|_\infty .
\end{align*}
But for any $\theta>0$ we have
$$
\|\widehat{h_{\theta}}\|_\infty\leq  \|h_{\theta}\|_{1} = \int_{0}^{\infty} |A(e^u)|e^{(-1-\theta) u} du \ll \int_{0}^{\infty} e^{(1-\theta) u} du
$$
using Remark \ref{l:convergenceetordre} with $\ve=1$, so according to the monotone convergence theorem (or the Lebesgue convergence theorem) $\lim_{\theta\to \infty} \|{h_{\theta}}\|_1=0$ and we are led to
$$
\int_0^T \left|\frac{\AAA(1+\si+it)}{1+\si+it}\right| dt \leq \frac{k}{1-2^{-\mu}}~{\si^{-\mu}},
$$
which gives the assertion for $\mu>0$ in view of \eqref{estZm}.

In case $\mu=0$ we start our calculation similarly, but fix the value of $N$ as $N:=\lceil\log_2(1/\si)\rceil$
and thus have $ 2^N \si \geq 1$, always. The above calculation then leads to
\begin{align*}
\int_0^T \left|\frac{\AAA(1+\si+it)}{1+\si+it}\right| dt & \leq kN + T \int_{0}^{\infty} |A(e^u)|e^{(-1-2^N\si) u} du \\& \leq k \left\{1+ \log_2\left(\frac{1}{\si}\right)\right\} + T \int_{0}^{\infty} |A(e^u)|e^{-2u}du.
\end{align*}
Clearly, the last integral is bounded in virtue of Remark \ref{l:convergenceetordre}, so at least with some ineffective constant $K'$ the assertion in \eqref{eq:intAcompareeta} follows. However, to make the constant explicit, we need to invoke the above explicit calculations with functions of controlled decrease.

Let us denote $\mu_0=0$ under Conditions {\it 1} and {\it 3} and $\mu_0=(m-2)_+$ under Condition {\it 2} and
$$k^{*}:=\left\{\begin{array}{ll}
M(A,1) & \mbox{ under Condition {\it 1}}\\
M(A/\log^{(m-2)_+},e)+\|A\|_{[1,e]} & \mbox{ under Condition {\it 2}}\\
 B(A,1) &\mbox{  under Condition {\it 3}.}
 \end{array}\right.$$

  By \eqref{minslowdec} and \eqref{minF}, we always have $A(x)>-k^{*}x(1+\log x)^{1+\mu_0}$ for $x\geq 1$ and so in view of $|A(e^u)|=A(e^u)_{+}+A(e^u)_{-}=A(e^u)+2A(e^u)_{-}$ we are led to
\begin{align*}
\int_{0}^{\infty} |A(e^u)|e^{-2 u}du & \leq \int_{0}^{\infty} A(e^u) e^{-2u}du +2k^{*} \int_0^\infty (u+1)^{1+\mu_0}e^{-u}du \\& \leq \int_{0}^{\infty} A(e^u) e^{-2 u}du +2k^{*} e\Gamma(2+\mu_0)
\\&=\frac{\AAA(2)}{2}+2e\Gamma(2+\mu_0)k^{*}=:K^{*},
\end{align*}
just denoting this last constant as $K^{*}$. So now we have for $0<\si<1$
\begin{align*}
\int_0^T \left|\frac{\AAA(1+\si+it)}{1+\si+it}\right| dt & \leq k\left(1+\log_2\left(\frac{1}{\si}\right) \right) + K^{*}T \\
&\leq \frac{2}{\log{2}} (K^{*}T+k) \log\left(1+\frac{1}{\si}\right).
\end{align*}

Taking into account $Z_1(\si,\infty)\asymp \log(1+1/\si)$ from \eqref{estZm} we get
$$
\int_0^T \left|\frac{\AAA(1+\si+it)}{1+\si+it}\right| dt \leq K^{**} \log\left(1+\frac{1}{\si}\right) \leq K' Z_1(\si,\infty),
$$
with $K^{**}$ and hence $K'$ depending explicitly on the parameters given.
\end{proof}

\begin{corollary}\label{c:localbddnessbyeta} Assume the same hypothesis as in Theorem \ref{th:kleTfixe}, but replacing \eqref{eq:integralcondionA} by
\begin{equation}\label{eq:integralconditioneta}
\eta(\si,T)\leq K_2 \si^{-(m-1)_+} \quad (0<\si<1)
\end{equation}
for some positive constant $K_2$.
Then we have
\begin{equation}\label{eq:semistrong}
|A(x)| \leq \begin{cases} K_3 x\log^{m-1} x & \textrm{if} ~~~ m>1 \\ K_3x\log\log x  &{\textrm{if}}~~~ 0<m\leq 1  \end{cases} \quad \left(x\geq e^e \right).
\end{equation}
Here the constant $K_3$ is explicit and depends on $|\AAA(2)|$ (if $m\leq 1$) and on the same parameters as in Theorem \ref{th:kleTfixe} but now dependence on $K_1$ in \eqref{eq:integralcondionA} is replaced by dependence on $K_2$ in \eqref{eq:integralconditioneta}.
\end{corollary}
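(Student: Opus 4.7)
The plan is to use Lemma \ref{l:Aandetacompare} as a bridge that converts the hypothesis \eqref{eq:integralconditioneta} on $\eta(\sigma,T)$ into a bound of exactly the form required by condition \eqref{eq:integralcondionA} of Theorem \ref{th:kleTfixe}, after which the conclusion will follow by a direct application of that theorem. Concretely, I would apply Lemma \ref{l:Aandetacompare} with $\mu:=(m-1)_{+}$ and $k:=K_2$, obtaining
$$
\int_0^T \left|\frac{\mathcal{A}(1+\sigma+it)}{1+\sigma+it}\right|\,dt \,\leq\, K' Z_{\mu+1}(\sigma,\infty) \qquad (0<\sigma<1),
$$
with $K'$ depending explicitly on the parameters allowed in the statement.

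The argument then splits according to whether $m>1$ or $0<m\leq 1$. When $m>1$, we have $\mu=m-1>0$, so $Z_{\mu+1}(\sigma,\infty)=Z_m(\sigma,\infty)$, and the displayed bound is literally condition \eqref{eq:integralcondionA} with $K_1:=K'$ and parameter $m$. Since the controlled-decrease hypotheses of the Corollary coincide with those of Theorem \ref{th:kleTfixe}, a direct application of the latter yields $A(x)=O(x\log^{m-1}x)$ for $x\geq e$, with the implicit constant $K_3$ inheriting the dependences of $K'$ and of the Theorem.

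When $0<m\leq 1$, we have $\mu=0$, and in this regime the explicit branch of Lemma \ref{l:Aandetacompare} must be invoked. This branch precisely requires one of Conditions \emph{1}, \emph{2}, or \emph{3} --- each of which is assumed here --- and it produces a constant $K'$ that depends, additionally, on $|\mathcal{A}(2)|$, accounting for the extra dependence noted in the Corollary. The resulting bound involves $Z_1(\sigma,\infty)$, which matches condition \eqref{eq:integralcondionA} for the parameter $m':=1$. To conclude I would apply Theorem \ref{th:kleTfixe} with $m'=1$ in place of $m$, after verifying that the controlled-decrease hypothesis transfers: Conditions \emph{1} and \emph{3} make no reference to $m$; Condition \emph{2} is preserved because $0<m\leq 1\leq 2$ forces $(m-2)_{+}=0=(m'-2)_{+}$, so in all three cases the assumption on $A$ is literally unchanged. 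The Theorem then yields $A(x)=O(x\log\log x)$ for $x\geq e^{e}$, as desired.

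There is no genuine analytic obstacle; the whole argument is a bookkeeping of parameters across the bridge lemma. The only step requiring real care is the case $0<m\leq 1$, where one must apply Theorem \ref{th:kleTfixe} with the \emph{shifted} parameter $m'=1$ rather than with the original $m$, and check that the controlled-decrease hypotheses survive this shift --- which they do, thanks to the coincidence $(m-2)_{+}=(1-2)_{+}=0$ throughout that range. Tracking the constants through the two invocations of the Lemma and the Theorem then furnishes the dependence of $K_3$ advertised in the statement.
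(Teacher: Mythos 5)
Your proposal is correct and follows essentially the same route as the paper: convert the $\eta$-bound into a bound of the form \eqref{eq:integralcondionA} via Lemma \ref{l:Aandetacompare} with $\mu=(m-1)_{+}$, then apply Theorem \ref{th:kleTfixe}. Your explicit treatment of the case $0<m\leq 1$ (applying the theorem with the shifted parameter $m'=1$ and checking that the controlled-decrease hypotheses are unchanged since $(m-2)_{+}=0$) simply spells out what the paper leaves implicit in its closing remark.
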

\begin{proof}
We apply Lemma \ref{l:Aandetacompare} with $\mu=(m-1)_{+}$,  and thus obtain
\begin{equation}\label{eq:conditiontoZfrometa}
\int_0^T \left|\frac{\AAA(1+\si+it)}{1+\si+it}\right| dt \ll Z_{\mu+1}(\si,\infty).
\end{equation}
An application of Theorem \ref{th:kleTfixe} then gives the result.

Note that in case $\mu=0$, Condition {\it 2} also reduces to Condition {\it 1} and dependence of the implied constants on $m$ becomes out of consideration.
\end{proof}


\section{Local lemma of "Ganelius-Tenenbaum type"}\label{sec:Ganelius}

In this section, we prove a new variant of the Lemma of Ganelius, (which is itself a further effective version of an inequality of Bohr). Ganelius' Lemma was used in various versions by Tenenbaum to obtain effective estimations related to the Wiener-Ikehara theorem and other Tauberian theorems. However, as it is seen from the version we quote below, the estimates worked in a "uniform" way, that is, only globally valid estimates could be used in the final results. Our goal will be to relax on this, that is to exploit asymptotic decrease of the estimates in the conditions. Succeeding in that provides a key to our approach in finding effective error terms in the Wiener-Ikehara Theorem even if only asymptotic negligibility conditions are available on the functions to be analyzed. That relaxation opens up the possibility to consider even not necessarily increasing functions $A(x)$, but also those who have only some relaxed asymptotic conditioning on their monotonicity (as discussed in Section \ref{sec:slowdecrease}).

\begin{lemme}[Ganelius-Tenenbaum]\label{lemmegalten}
Let $g:\reel\rightarrow\reel$ be a bounded function of $L^{1}(\reel)$ satisfying for some positive $T$
$$\sup_{x\leq y\leq x+1/T}(g(y)-g(x)) \leq K<\infty.$$
Then we have
$$\|g\|_\infty\leq 16 K+6\int_{-T}^{T}|\widehat{g}(\tau)|d\tau.$$
\end{lemme}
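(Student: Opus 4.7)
My plan is to adapt the scheme of Lemma \ref{lemf}: test a translated convolution of $g$ against the Fej\'er kernel and estimate the resulting quantity from two sides---spectrally by Fourier inversion, and pointwise by the one-sided Tauberian hypothesis. First I would reduce to the case $\|g\|_\infty = \sup_{\reel} g$. Setting $\tilde g(x) := -g(-x)$, one readily checks that $\tilde g$ satisfies the same one-sided condition with the same $K$, that $\|\tilde g\|_\infty = \|g\|_\infty$, and that $|\widehat{\tilde g}(\tau)| = |\widehat g(-\tau)|$, so the right-hand side of the claimed inequality is unchanged and the case $\|g\|_\infty = -\inf g$ reduces to the other. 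Fix a sequence $(u_n)\subset \reel$ with $g(u_n) \to M := \|g\|_\infty$.

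For a parameter $c > 0$ to be fixed later, consider
$$
J_n := \int_\reel \chi_T(v) g(u_n - c/T + v)\,dv = (\chi_T \ast g)(u_n - c/T).
$$
Since $\widehat{\chi_T}$ is supported on $[-T,T]$ and bounded by $1$ by \eqref{fourierfejer}, applying Fourier inversion to this convolution yields the spectral upper bound $|J_n| \leq \tfrac{1}{2\pi}\int_{-T}^T |\widehat g(\tau)|\,d\tau$. For a lower bound on $J_n$, the one-sided Tauberian hypothesis iterates: for $y \leq u_n$ with $u_n - y \leq 2c/T$, partitioning $[y,u_n]$ into $\lceil T(u_n-y)\rceil$ consecutive pieces of length $\leq 1/T$ gives $g(u_n) - g(y) \leq \lceil T(u_n-y)\rceil K \leq (T(u_n - y) + 1)K$. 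Setting $y = u_n - c/T + v$ with $v \in [-c/T, c/T]$ delivers the affine pointwise bound $g(u_n - c/T + v) \geq g(u_n) - (c + 1 - Tv)K$. Splitting $J_n$ at $|v| = c/T$, using $g \geq -M$ on the outside and this affine bound on the inside, and observing that $\int_{|v| \leq c/T} \chi_T(v)\,Tv\,dv = 0$ by evenness of $\chi_T$, I obtain
$$
J_n \geq (g(u_n) - (c+1)K)(1 - I_c) - M\,I_c,
$$
where $I_c$ is the tail mass from \eqref{eq:Iesti}.

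Passing $n \to \infty$ and combining the two bounds gives $M(1 - 2I_c) \leq (c+1)(1 - I_c) K + \tfrac{1}{2\pi}\int_{-T}^T|\widehat g(\tau)|\,d\tau$. Choosing, e.g., $c = 5$ and invoking $I_5 \leq 4/(5\pi)$ from \eqref{eq:Iesti} ensures $1 - 2I_c > 0$ and rearranges into an inequality of the form $\|g\|_\infty \leq aK + b\int_{-T}^T |\widehat g(\tau)|\,d\tau$ with explicit constants $a, b$ safely below $16$ and $6$ respectively, proving the claim. The main obstacle is the lower-bound step: a crude replacement of the iterated Tauberian estimate by its worst-case value $2cK$ on the whole inner interval already inflates the $K$-coefficient beyond what is asserted. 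The sharper affine-in-$v$ bound $(c+1-Tv)K$, combined with the oddness of its linear part against the even kernel $\chi_T$---and with the initial symmetrization needed to deal with the one-sided nature of the hypothesis---is what brings the numerical constants into the claimed range.
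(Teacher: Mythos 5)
Your argument is correct, and it is \emph{not} the route the paper relies on: the paper gives no proof of Lemma \ref{lemmegalten} at all, referring instead to Tenenbaum \cite{T2}, p.~328, where the statement is deduced in two steps -- first the Ganelius lemma for functions with $\widehat g=0$ on $[-T,T]$ (the scheme reproduced in Lemma \ref{l:G}), then a decomposition $g=g\ast\al+(g-g\ast\al)$ with a trapezoidal multiplier to remove that spectral hypothesis (the scheme reproduced in Lemma \ref{l:T}). You bypass both steps with a single Fej\'er-kernel test, which is closer in spirit to the paper's Lemma \ref{lemf}: after the symmetrization $\tilde g(x)=-g(-x)$ (which indeed preserves the one-sided hypothesis, $\|g\|_\infty$ and $\int_{-T}^T|\widehat g|$), you evaluate $J_n=\int\chi_T(v)g(u_n-c/T+v)\,dv$ near a maximizing sequence, bound it spectrally by $\frac1{2\pi}\int_{-T}^T|\widehat g|$ via \eqref{fourierfejer} (no vanishing of $\widehat g$ is needed, only $|\widehat{\chi_T}|\le 1$ and its support), and bound it from below by iterating the Tauberian condition, the key refinement being the affine bound $g(u_n-c/T+v)\ge g(u_n)-(c+1-Tv)K$ whose odd part integrates to zero against the even kernel. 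All steps check out: the telescoping uses intervals of length $\le 1/T$ with $y\le u_n$, $K\ge 0$ follows from taking $y=x$ in the hypothesis, and with $c=5$ and $I_5\le 4/(5\pi)$ from \eqref{eq:Iesti} one gets $\|g\|_\infty(1-2I_5)\le 6(1-I_5)K+\frac1{2\pi}\int_{-T}^T|\widehat g|$, hence constants about $12.3$ and $0.33$, comfortably inside the asserted $16$ and $6$. What your shortcut buys is brevity and better constants; what it does not give (and was not asked for) is the localized, asymptotic refinement of Lemmas \ref{l:G}--\ref{l:T}, for which the two-step decomposition is still needed because there the error must decay with $x$ rather than be uniform.
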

For a proof, see e.g. \cite{T2}, p. 328.

\begin{lemme}[Ganelius type lemma]\label{l:G} Let $\Phi$ be a nonnegative even function on $\reel$, nonincreasing on the half-line $[0,\infty)$.
Let $a>0$ and $T > 0$ be real numbers and $g: \reel \to \reel$ be a bounded measurable function satisfying
\begin{equation}\label{eq:i}
\sup_{0<y<1/T} \left\{ g(x+y)-g(x) \right\} \leq a + \Phi(x) \quad \left( x\geq x_1:=\frac{8}{T}\sqrt{1+\frac{\Phi(0)}{a}}\right)
\end{equation}
and
\begin{equation}\label{eq:ii}
    \widehat{g}(\tau)=0 \qquad (  |\tau|<T).
\end{equation}
Then we have
\begin{equation}\label{eq:Gan}
|g(x)|\leq  8\sqrt{\frac{14}{3}}\sqrt{(a+\Phi(0))(a+\Phi(x/2))}
\end{equation}
for all $x>x_0(T,a):=(16/T)\sqrt{1+\Phi(0)/a}$.
\end{lemme}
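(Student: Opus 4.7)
The plan is to refine the Ganelius--Tenenbaum inequality (Lemma \ref{lemmegalten}) pointwise, by combining the spectral condition $\widehat{g}\equiv 0$ on $(-T,T)$ with the one-sided iterated control stemming from the increment hypothesis and the monotonicity of $\Phi$. Since $\widehat{\chi_T}$ is supported on $[-T,T]$ with $\widehat{\chi_T}(\pm T)=0$, while $\widehat{g}$ vanishes on $(-T,T)$, the product distribution $\widehat{g}\cdot\widehat{\chi_T}$ vanishes, and hence $g\ast\chi_T\equiv 0$ (robustly interpretable by first replacing $T$ by some $T_1<T$ and then passing to the limit). Together with $\int\chi_T=1$, this yields for every $\Delta\in\RR$ the identity
\[
g(x) \;=\; \int_{\RR}\bigl(g(x)-g(x+u)\bigr)\,\chi_T(u-\Delta)\,du.
\]
Iterating the hypothesis over successive subintervals of length $1/T$ gives, for $y\geq x_1$ and $v>0$ with $y-v\geq x_1$, the one-sided estimates $g(y+v)-g(y)\leq(Tv+1)(a+\Phi(y))$ and $g(y)-g(y-v)\leq(Tv+1)(a+\Phi(y-v))$; for $y=x>x_0$ and $0<v\leq x/2$, the latter right-hand side is dominated by $(Tv+1)(a+\Phi(x/2))$.

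To bound $g(x)$ from above I would choose $\Delta<0$ so that the shifted kernel $\chi_T(\cdot-\Delta)$ places its mass in the backward range $u<0$, where the second iterated estimate provides an upper bound on the integrand $g(x)-g(x+u)$; on the ``wrong side'' $u>0$, where only a lower bound on this integrand is available, the kernel mass is a tail controlled via \eqref{eq:Iesti}. Splitting the integral into the main backward range $u\in(-\beta,0)$ with $\beta\leq x/2$ (contributing at most $(T\beta+1)(a+\Phi(x/2))$ times the kernel mass, since $x+u\geq x/2$), and the complementary tails $u>0$, $u\in(x_1-x,-\beta)$, $u<x_1-x$ (all bounded via \eqref{eq:Iesti} times the trivial estimate $2\|g\|_\infty$), with the optimal choice $|\Delta|=\beta/2$ that symmetrizes the total outside-mass, yields an inequality of the shape
\[
g(x) \;\leq\; C_1(T\beta+1)(a+\Phi(x/2)) \;+\; \frac{C_2\|g\|_\infty}{T\beta}.
\]
Feeding in the a priori estimate $\|g\|_\infty \leq C_0(a+\Phi(0))$, which I would derive by a streamlined variant of the proof of Lemma \ref{lemmegalten} valid in the $L^\infty$ setting (using backward propagation from a near-extremum of $g$ together with $g\ast\chi_T\equiv 0$) with the uniform increment bound $K=a+\Phi(0)$, and optimizing $T\beta$ by AM--GM at $T\beta=\sqrt{C_2 C_0(a+\Phi(0))/(C_1(a+\Phi(x/2)))}$ --- admissible exactly when $x>x_0=(16/T)\sqrt{1+\Phi(0)/a}$, so that $\beta\leq x/2$ --- delivers $g(x)\leq C\sqrt{(a+\Phi(0))(a+\Phi(x/2))}$; careful tracking of the constants arising in \eqref{eq:Iesti} and of $C_0$ produces the asserted coefficient $8\sqrt{14/3}$. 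A symmetric argument with $\Delta>0$ and the forward iterated estimate yields the matching lower bound on $-g(x)$.

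The main obstacle is the one-sided character of the hypothesis, which forces the use of a shifted, non-centered Fej\'er kernel: a centered kernel would place half its mass on the side where only a lower bound on the integrand is available, contributing a full $\|g\|_\infty$ term that would overwhelm the desired geometric-mean bound. The shift $\Delta\neq 0$ routes the essential mass of the kernel into the ``protected'' half-line, at the cost of introducing an off-center tail on the other side; balancing the main-region contribution (depending on $\Phi(x/2)$) against this off-center tail (controlled by the a priori $\|g\|_\infty$ bound and hence by $\Phi(0)$) via the choice $|\Delta|=\beta/2$ and the optimization of the kernel width $\beta$ is precisely what delivers both the geometric-mean form of the final bound and the exact admissibility threshold $x>x_0$.
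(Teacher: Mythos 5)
Your proposal is correct and follows essentially the same route as the paper's proof: exploit $g\ast\chi_T\equiv 0$, evaluate the convolution with a shift of half the window width so that the bulk of the Fej\'er mass sits in the "protected" half-line where the iterated one-sided increment bound applies with $\Phi(x/2)$, bound the tails via \eqref{eq:Iesti} and a Ganelius-type a priori bound $\|g\|_\infty\ll a+\Phi(0)$, and optimize the window width (your $T\beta$ is the paper's $2q$), which is exactly how the constant $8\sqrt{14/3}$ and the threshold $x_0$ arise there. No substantive difference to report.
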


\begin{proof}
Our proof is a variation of the argument on page 327 of \cite{T2}.\\
Fix a real number $x>0$ such that $g(x)\not=0$ and denote $\theta:=g(x)/|g(x)|$. We define $ \chi_T$ as in \eqref{eq:chiFejer}.
We have $\textrm{supp}\widehat{\chi_T}=[-T,T]$, thus $\widehat{g} \widehat{\chi_T} =0$ and $g\ast \chi_T=0$.
Let $q>0$ be fixed.  By the
application of\eqref{eq:i} $(\lfloor 2q\rfloor +1)$-times, we get
\begin{align*}
0 & = \theta \int_{-\infty}^{\infty} g(x-t-\theta q/T) \chi_T(t) dt = \theta\int_{|t|>q/T}g(x-t-\theta q/T)\chi_T(t) dt\\
&+\theta \int_{-q/T}^{q/T} g(x) \chi_T(t)  dt   - \theta\int_{-q/T}^{q/T} (g(x)-g(x-t-\theta q/T)) \chi_T(t) dt  \notag\\
& \geq -\|g\|_{\infty} I_q+(1-I_q) |g(x)| - \left( \lfloor 2q \rfloor +1 \right) (1-I_q)\left(a +\max\limits_{[x-2q/T,x+2q/T]} \Phi \right)\notag
\end{align*}
where $I_q$ is defined in \eqref{eq:Iesti}, and we used that Condition \eqref{eq:i} applies, that is, $x-2q/T$ must exceed $x_1$.

Using \eqref{eq:i},  Ganelius' lemma, (contained in Lemma \ref{lemmegalten} above) implies $\|g\|_{\infty}\leq 16(a+\Phi(0))$.

With $q\geq \left(\pi/4-1/2\right)^{-1}$ (hence $\pi q/4-1\geq q/2$ and $q\geq 3$), one gets for $x\geq 4q/T$
\begin{align}\label{eq:gz}
|g(x)| & \leq  \left\{ (\lfloor 2q\rfloor +1) \left( a+\Phi(x -2q/T)\right) + \frac{16(a+\Phi(0))}{\pi q/4-1} \right\} \notag
\\ & \leq   \left(2+\frac{1}{3}\right)q \left( a+\Phi(x /2)\right) + \frac{32(a+\Phi(0))}{q}.
 \end{align}

Now,
$$\sqrt{\frac{32(a+\Phi(0))}{\left(2+\frac{1}{3}\right)\left(a+\Phi(x/2)\right)}} \geq 4\sqrt{\frac{6}{7}}\geq \left(\pi/4-1/2\right)^{-1},$$
so we can choose $q=\sqrt{\frac{32(a+\Phi(0))}{\left(2+{1}/{3}\right)\left(a+\Phi(x/2)\right)}} $ in \eqref{eq:gz}. With such a $q$ and for $x>\frac{16}{T}\sqrt{1+\Phi(0)/a}$, we also have
$x\geq 4q/T$, and in particular $x-2q/T\geq x/2\geq x_1$, thus we obtain
\begin{align*}
|g(x)|&\leq  8\sqrt{\frac{14}{3}}\sqrt{(a+\Phi(0))\left(a+\Phi(x/2)\right)}.
\end{align*}
\end{proof}

\begin{lemme}[Tenenbaum type Lemma]\label{l:T}
Let $\Phi$ be a nonnegative even function on $\reel$, nonincreasing on the half-line $[0,\infty)$. Let $b>0$ and $T > 0$ be real numbers and $g: \reel \to \reel$ be a bounded measurable function satisfying
\begin{equation}\label{eq:iii}
\sup_{0<y<1/T} \left\{ g(x+y)-g(x) \right\} \leq b + \Phi(x) \quad \left( x\geq x_1:=\frac{8}{T}\right).
\end{equation}
Then, for all $x>x_0(T,b)=(16/T)\sqrt{1+\Phi(0)/b}$, we have
\begin{align}\label{eq:Ten}
|g(x)| \leq 20&\Bigg(\sqrt{\Phi(0)\Phi(x/2)}+ \sqrt{\Phi(0)}\sqrt{b+\frac{1}{\pi}\int_{-T}^{T}|\widehat{g}(\tau)|d\tau}\nonumber\\
& +b+\frac{1}{\pi}\int_{-T}^{T}|\widehat{g}(\tau)|d\tau\Bigg).
\end{align}
\end{lemme}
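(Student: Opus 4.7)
The plan is to adapt the proof of Lemma \ref{l:G} to the situation where $\widehat{g}$ need no longer vanish on $[-T,T]$, by paying with an additive $\int_{-T}^{T}|\widehat g|$ term, and to supply the ambient $\|g\|_\infty$ bound from the original Ganelius inequality (Lemma \ref{lemmegalten}). Fix $x>x_0(T,b)$ with $g(x)\neq 0$, set $\theta:=\mathrm{sgn}(g(x))$, let $q\geq 3$ be a parameter to be chosen, and consider
\[
J:=\theta\int_{\RR}g\!\left(x-t-\theta q/T\right)\chi_T(t)\,dt,
\]
where $\chi_T$ is the Fej\'er kernel from \eqref{eq:chiFejer}. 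On the one hand, Plancherel's formula together with $\mathrm{supp}\,\widehat{\chi_T}\subset[-T,T]$ and $|\widehat{\chi_T}|\leq 1$ yields $|J|\leq (2\pi)^{-1}\int_{-T}^{T}|\widehat g(\tau)|\,d\tau$. On the other hand, splitting the integral defining $J$ into $|t|\leq q/T$ and $|t|>q/T$ as in the proof of Lemma \ref{l:G}, bounding the outer piece by $\|g\|_\infty I_q$, and iterating the one-sided step condition \eqref{eq:iii} at most $\lfloor 2q\rfloor+1$ times on the inner piece gives
\[
J\geq (1-I_q)|g(x)|-\|g\|_\infty I_q-(\lfloor 2q\rfloor+1)(1-I_q)\bigl(b+\Phi(x-2q/T)\bigr).
\]

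The monotonicity and evenness of $\Phi$ allow me to replace $\Phi(x-2q/T)$ by the cleaner $\Phi(x/2)$ provided $q\leq xT/4$, while the hypothesis $x>x_0(T,b)=(16/T)\sqrt{1+\Phi(0)/b}$ simultaneously guarantees $x-2q/T\geq x_1=8/T$, a constraint needed to legitimize the step-condition iteration. Combining the two bounds on $J$ and choosing $q$ large enough so that $I_q\leq 4/(\pi q)\leq 1/2$, then inserting the Ganelius inequality in the form $\|g\|_\infty\leq 16(b+\Phi(0))+6\int_{-T}^{T}|\widehat g|$ (Lemma \ref{lemmegalten} with $K=b+\Phi(0)$), I arrive at an estimate of the shape
\[
|g(x)|\leq C_1\!\left(b+\tfrac{1}{\pi}\int_{-T}^{T}|\widehat g|\right)+\frac{C_2}{q}\!\left(\Phi(0)+\tfrac{1}{\pi}\int_{-T}^{T}|\widehat g|\right)+C_3\,q\bigl(b+\Phi(x/2)\bigr)
\]
with absolute numerical constants $C_1,C_2,C_3$.

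It remains to optimize $q$ by AM-GM so as to balance the $q^{-1}$-term against the $q$-term. The optimal value $q^*\asymp\sqrt{(\Phi(0)+\pi^{-1}\int|\widehat g|)/(b+\Phi(x/2))}$ converts them into a single cross-term of order $\sqrt{(\Phi(0)+\pi^{-1}\int|\widehat g|)(b+\Phi(x/2))}$, and the sub-additivity $\sqrt{u+v}\leq\sqrt u+\sqrt v$, combined with $\Phi(x/2)\leq\Phi(0)$ and $b\leq b+\pi^{-1}\int|\widehat g|$, breaks it into the three types of terms on the right-hand side of \eqref{eq:Ten}. The principal obstacle is bookkeeping: squeezing the combined constants from Ganelius ($16$ and $6$), the Plancherel factor $1/(2\pi)$, the Fej\'er tail $I_q\leq 4/(\pi q)$, and the AM-GM optimization down to the explicit value $20$, while simultaneously verifying that $q^*$ falls in the admissibility range $3\leq q\leq xT/4$ dictated by $x>x_0(T,b)$. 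This part is purely computational; the conceptual content is the trade-off between a Plancherel upper bound for $J$ and the iterated one-sided step condition, exactly as in Tenenbaum's argument in \cite{T2}.
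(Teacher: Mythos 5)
Your route is genuinely different from the paper's. The paper does not rerun the Fej\'er-kernel argument on $g$ itself: it convolves $g$ with an auxiliary kernel $\al$ whose Fourier transform is $1$ on $[-T,T]$ and supported in $[-T-\ve,T+\ve]$, sets $f=g*\al$, $h=g-f$, notes that $\widehat h$ vanishes on $[-T,T]$, and applies Lemma \ref{l:G} to $h$ with the enlarged constant $a=b+Q$, $Q=\frac1\pi\int_{-T-\ve}^{T+\ve}|\widehat g|$, the low-pass part being absorbed both into the step condition for $h$ and into the final bound via $|f|\le Q/2$; then $\ve\to0$. Your one-step version (Plancherel bound $|J|\le\frac{1}{2\pi}\int_{-T}^{T}|\widehat g|$ replacing $J=0$, plus Lemma \ref{lemmegalten} for $\|g\|_\infty$) is more direct and avoids the auxiliary kernel and the limit in $\ve$; the price is that the quantity $Q_0:=\frac1\pi\int_{-T}^T|\widehat g|$ no longer enters symmetrically, which is exactly where your argument has a gap. (Your appeal to Lemma \ref{lemmegalten} although \eqref{eq:iii} is only assumed for $x\ge x_1$ mirrors the paper's own use of Ganelius inside Lemma \ref{l:G}, so I do not count that against you.)

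The gap is the claim that the optimal $q^*\asymp\sqrt{\bigl(\Phi(0)+Q_0\bigr)/\bigl(b+\Phi(x/2)\bigr)}$ "falls in the admissibility range $q\le xT/4$ dictated by $x>x_0(T,b)$". The threshold $x_0(T,b)=(16/T)\sqrt{1+\Phi(0)/b}$ contains no information about $Q_0$, whereas in your optimization $Q_0$ appears only in the numerator (through the Ganelius bound $\|g\|_\infty\le 16(b+\Phi(0))+6\pi Q_0$) and not in the denominator $b+\Phi(x/2)$; hence for $x$ just above $x_0(T,b)$ and $Q_0$ large compared with $b+\Phi(0)$ one has $q^*>xT/4$ and the shift $x-2q^*/T$ drops below $x/2$ (indeed below $x_1$), so the step-condition iteration is no longer legitimate at $q=q^*$. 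This is repairable: take $q=\min(q^*,xT/4)$ and, in the capped regime, bound the sum of the $1/q$- and $q$-terms by $2A/q$ with $q=xT/4$ and use $xT>16\sqrt{1+\Phi(0)/b}$ to land back in the form \eqref{eq:Ten} --- but this extra case analysis must be written out, and the resulting constants (with $16$ and $6$ from Lemma \ref{lemmegalten} and the factor $I_q/(1-I_q)$) come out close to $20$, so the bookkeeping you defer is tight rather than routine. The paper's decomposition avoids the issue structurally: there $Q$ is folded into the additive constant $a=b+Q$ of the step condition, so it appears in both numerator and denominator of the $q$ chosen inside Lemma \ref{l:G}, giving $q\le\sqrt{32/(7/3)}\,\sqrt{1+\Phi(0)/b}<4\sqrt{1+\Phi(0)/b}<xT/4$ uniformly in $Q$ for all $x>x_0(T,b)$.
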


\begin{proof} We follow Tenenbaum's proof of Lemma \ref{lemmegalten}, i.e. Théorème 7.15 on page 328 of \cite{T2}, replacing the Lemma of Ganelius (see \cite[Th\'eor\`eme 7.14]{T2}) by Lemma \ref{l:G} in the argument. So for $\ve>0$ we define on $\reel$ the function
$$
\al(t):=\frac{2}{\pi\ve t^2} \sin\left(\frac{\ve t}{2}\right)\sin\left(\frac{2T+\ve}{2}t\right)
$$
having the Fourier transform

$$
\widehat{\al}(\tau)=\begin{cases} 1 \qquad \qquad\qquad &
\textrm{if}~~|\tau|\leq T \\
\frac{T+\ve-|\tau|}{\ve} & \textrm{if}~~T< |\tau|\leq T+\ve \\
0 & \textrm{if}~~ |\tau| >T+\ve
\end{cases}
$$
and the functions $f:=g*\al$ and $h:=g-f$. For any real number $x$, we obviously have $|g(x)|\leq |h(x)| + \|f\|_{\infty}$, furthermore, thanks to the Condition \eqref{eq:iii}, we also have
\begin{align}\label{hch}
\sup_{0<y<1/T} \{h(x+y) -h(x)\} &\leq 2\|f\|_{\infty} +
\sup_{0<y<1/T} \left\{g(x+y) -g(x)\right\} \\ & \leq
\frac{1}{\pi} \int_{-T-\ve}^{T+\ve} |\widehat{g}(\tau)|d\tau +
 b + \Phi(x)  ,\notag
\end{align}
where we used $\|{f}\|_\infty\leq \|\widehat{f}\|_1/(2\pi)$ and $|\widehat{f}|=|\widehat{\al}\widehat{g}|$ for the last inequality.\\
On the other hand,
$\widehat{h}=\widehat{g}-\widehat{f}=\widehat{g}(1-\widehat{\al})=0$
on the interval $[-T,T]$, whence Lemma \ref{l:G} can be applied
with $a:=b+Q$, $Q:=\frac{1}{\pi}\int_{-T-\ve}^{T+\ve}
|\widehat{g}|$. Noting that $b\leq a$ implies $x_0(T,b)\geq
x_0(T,a)$ while $x_1(a,T)>8/T$, this gives for $x\geq x_0(T,b)$,
\begin{align*}
|h(x)|&\leq 8\sqrt{\frac{14}{3}}\sqrt{(b+Q+\Phi(0))(b+Q+\Phi(x/2))}.
\end{align*}
Adding $~|f(x)|\leq \|f\|_{\infty}\leq\frac{1}{2}Q~$ yields the inequality
\begin{align*}
|g(x)| & \leq 8\sqrt{\frac{14}{3}}\sqrt{(b+Q+\Phi(0))(b+Q+\Phi(x/2))}+\frac{1}{2} Q\\
&\leq 20\sqrt{(b+Q+\Phi(0))(b+Q+\Phi(x/2))}.
\end{align*}
After letting $\ve$ tend to $0$, the estimate \eqref{eq:Ten} results by comparing squares of the expressions on the right there and here. \end{proof}


\section{Effective Wiener-Ikehara Theorem}\label{sec:multiplepoles}

To get a theorem with an effective error term, we shall need a strong restriction on both the decrease of the function $A$ and the regularity of the Mellin-Stieltjes transform on the border line.

We first recall the effective Wiener-Ikehara theorem in Tenenbaum's version \cite[p.326]{T2}.
\begin{theoreme}[Effective Wiener-Ikehara]\label{ikin}
Let $A$ be a nondecreasing function such that the integral
$\mathcal{A}(s)=\int_0^\infty e^{-st}dA(t)$ is convergent for $\sigma>1$.
Furthermore, let $\alpha \geq 0$ and $\omega>-1$ be constants and let us define the functions
\begin{equation}\label{eq:Gdefwomega}
G(s):=\frac{1}{s+1}\mathcal{A}(s+1)-\frac{\alpha}{s^{\omega+1}}
\end{equation}
and correspondingly
\begin{equation}\label{eq:etadef}
\eta_G(\sigma,T):=\int_{-T}^{T}\left\vert G(2\sigma+i\tau)-G(\sigma+i\tau)\right\vert d\tau.
\end{equation}
If we assume that the above functions \eqref{eq:Gdefwomega} and \eqref{eq:etadef} satisfy for any fixed $T>0$ that $\sigma^\omega \eta_G(\sigma,T)=o(1)$ as $\sigma\rightarrow 0^{+}$, then we have (with an explicit $O$-constant depending only on $\alpha$ and $\omega$)
$$
A(t)= e^{t} t^\omega \left(\frac{\alpha}{\Gamma(\omega+1)}+O(r(t))\right) \qquad (t \geq 1),
$$
where
\begin{equation}\label{eq:defR}\notag
r(t)=\inf_{T\geq 64}\left(\frac{1}{T}+\frac{1}{t^\omega}\eta_G\left(\frac{1}{t},T\right)+\frac{1}{(Tt)^{\omega+1}}\right).
\end{equation}
\end{theoreme}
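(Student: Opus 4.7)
The target is to estimate the error function
\[
b(t) := e^{-t}A(t) - \alpha t^\omega/\Gamma(\omega+1),
\]
which, by integration by parts justified by Remark \ref{l:convergenceetordre}, has $G(s)$ as Laplace transform on $\Re s > 0$; it suffices to show $|b(t)| \ll t^\omega r(t)$ uniformly for $t$ large.

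My plan is to apply the Tenenbaum-type inequality of Lemma \ref{l:T} to a weighted version $g_\sigma(t) := b(t) e^{-\sigma t}$ with $\sigma := 1/t$ chosen to match the target point; at this scale $g_\sigma(t) = e^{-1}b(t)$, so a bound on $g_\sigma$ translates directly to a bound on $b$. The Fourier transform of $g_\sigma$ is $\widehat{g_\sigma}(\tau) = G(\sigma+i\tau)$, and although the hypothesis only controls the \emph{differences} $G(\sigma+i\tau) - G(2\sigma+i\tau)$, the integral $\int_{-T}^T|\widehat{g_\sigma}|\,d\tau$ is handled dyadically by telescoping $G(\sigma+i\tau) = \sum_{k\geq 0}[G(2^k\sigma+i\tau)-G(2^{k+1}\sigma+i\tau)]$ and absorbing the tail once $2^N\sigma$ exceeds $1$, exactly as in the proof of Lemma \ref{l:Aandetacompare}. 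The monotonicity of $A$ furnishes the one-sided Tauberian increment bound
\[
b(t+y) - b(t) \geq (e^{-y}-1)e^{-t}A(t) - \alpha\tfrac{(t+y)^\omega - t^\omega}{\Gamma(\omega+1)},
\]
and combined with a crude preliminary bound $A(t) \ll e^t t^{\omega+1}$ (coming from $A(t)\leq \mathcal{A}(1+\sigma)e^{(1+\sigma)t}$ at $\sigma = 1/t$ together with $\mathcal{A}(1+\sigma) \asymp \alpha/\sigma^{\omega+1}$), this yields $b(t+y) - b(t) \geq -Cyt^{\omega+1}$ for $0 < y \leq 1$; here the sharp behaviour of $(t+y)^\omega - t^\omega$ at small and at large $t$ is precisely what Lemma \ref{l:beta} quantifies through its control of differences of $\beta(\omega,\cdot)$. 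Transferring this estimate to $\pm g_\sigma$ via the smoothness of $e^{-\sigma t}$ converts it into an upper bound of the form $b_0 + \Phi(t)$ on increments of $\pm g_\sigma$ over intervals of length $1/T$, where $b_0 \asymp t^{\omega+1}/T$ and $\Phi(t)$ is taken to be the nonincreasing majorant of the Gamma-type envelope $Ct^{\omega+1}e^{-\sigma t}$, whose maximum $\Phi(0)$ is of order $\sigma^{-\omega-1} = t^{\omega+1}$.

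Inserting these ingredients into Lemma \ref{l:T} applied to both $+g_\sigma$ and $-g_\sigma$ then produces a bound
\[
|g_\sigma(t)| \ll \sqrt{\Phi(0)\Phi(t/2)} + \sqrt{\Phi(0)\bigl(b_0 + \eta_G(\sigma,T)\bigr)} + b_0 + \eta_G(\sigma,T).
\]
Substituting $\sigma = 1/t$, using $\Phi(t/2) \asymp (t/2)^{\omega+1}e^{-1/2}$, and dividing through by $e^{-\sigma t} = e^{-1}$, each of the three summands on the right becomes a constant times $t^\omega$ times exactly one of $1/(Tt)^{\omega+1}$, a geometric-mean cross term dominated by the other two, and $1/T + t^{-\omega}\eta_G(1/t,T)$, reproducing the three pieces of $r(t)$. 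Taking the infimum over $T \geq 64$ then concludes the proof. The main obstacle, as I anticipate it, is precisely this final matching step: the envelope $\Phi$, the constant $b_0$, and the dyadic tail estimate on $\int|\widehat{g_\sigma}|$ must be arranged sharply enough for the three terms produced by Lemma \ref{l:T} to collapse to the three terms of $r(t)$ with the correct power $t^\omega$, rather than the a priori factor $t^{\omega+1}$ inherited from the crude preliminary estimate on $A$.
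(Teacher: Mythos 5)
There is a genuine gap, and it sits exactly where you locate it yourself: the "final matching step" cannot be made to work with your choice of $g_\sigma(t)=b(t)e^{-\sigma t}$. With that choice $\widehat{g_\sigma}(\tau)=G(\sigma+i\tau)$, and the hypothesis only controls differences $G(\sigma+i\tau)-G(2\sigma+i\tau)$, so you are forced into the dyadic telescoping of Lemma \ref{l:Aandetacompare}. But that telescoping produces $\sum_{k}\eta_G(2^k/t,T)$ plus a tail, and under the sole hypothesis $\sigma^{\omega}\eta_G(\sigma,T)=o(1)$ (no monotonicity or uniformity of $\eta_G$ in $\sigma$) this sum is not $O\bigl(\eta_G(1/t,T)+t^{\omega}/T+\dots\bigr)$: it can only be bounded by quantities of size $o(\sigma^{-\omega})$ involving all larger scales. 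That is enough for the qualitative Wiener--Ikehara asymptotic, but it destroys precisely the effectivity claimed in Theorem \ref{ikin}, whose error $r(t)$ contains $\eta_G$ evaluated at the single scale $\sigma=1/t$. The same problem appears quantitatively in your envelope: with $\Phi(0)\asymp\Phi(t/2)\asymp t^{\omega+1}$ and $b_0\asymp t^{\omega+1}/T$, the output of Lemma \ref{l:T} is of order $t^{\omega+1}$, which is \emph{larger} than the main term $\alpha t^{\omega}/\Gamma(\omega+1)$, so the resulting bound is vacuous; the three terms do not collapse to $t^{\omega}r(t)$. (Two smaller slips: Lemma \ref{l:T} cannot be applied to $+g_\sigma$, since monotonicity of $A$ only gives the one-sided increment bound needed for $-g_\sigma$ --- but the lemma already bounds $|g|$, so one application suffices; and the crude input $\mathcal{A}(1+\sigma)\asymp\alpha\sigma^{-\omega-1}$ is not immediate from the integrated difference hypothesis, which says nothing pointwise at $\tau=0$.)

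The device that repairs this --- used by Tenenbaum and by the paper, where Theorem \ref{ikin} is recovered as the special case of Theorem \ref{th:kle} via Corollary \ref{cor:generalGerald} and Remark \ref{rem:Gerald} --- is to build the difference structure into the function itself: take $g_\sigma=h_\sigma-h_{2\sigma}$, i.e.\ $A(t)e^{-(1+\sigma)t}(1-e^{-\sigma t})$ in your normalization, and subtract the explicit main term $2\alpha\sigma^{-\omega}\beta(\omega,\sigma t)$ with $\beta$ as in Lemma \ref{l:beta}. Then the Fourier transform of the resulting $L_\sigma$ is exactly $G(\sigma+i\tau)-G(2\sigma+i\tau)$, so $\int_{-T}^{T}|\widehat{L_\sigma}|=\eta_G(\sigma,T)$ with no summation over scales; the Tauberian condition (nondecreasing, hence moderately decreasing with $B_1=B_2=0$, so $\Phi\equiv 0$ in Lemma \ref{l:T}) gives increments of $g_\sigma$ losing only $O(\|g_\sigma\|_\infty/T)$, and $\|g_\sigma\|_\infty\ll \sigma^{-\omega}+\eta_G(\sigma,T)$ is supplied by Lemma \ref{lemf}, not by a crude bound of size $t^{\omega+1}$. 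Evaluating at $t=1/\sigma$, where $1-e^{-\sigma t}=1-e^{-1}$ and $\sigma^{-\omega}\beta(\omega,1)$ reproduces the main term $t^{\omega}/\Gamma(\omega+1)$ up to the same factor, yields exactly the three pieces of $r(t)$: $\eta_G(1/t,T)$ from the Fourier integral, $t^{\omega}/T$ from $\|g_\sigma\|_\infty/T$, and $1/(Tt)^{\omega+1}$ (after division by $t^{\omega}$, the term $\sigma/T^{\omega+1}$) from the $y^{\omega+1}$ part of the $\beta$-difference estimate in Lemma \ref{l:beta}. Your outline as it stands proves at best the ineffective asymptotic, not the stated effective theorem.
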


\begin{remark}
The hypothesis $\sigma^\omega \eta_G(\sigma,T)=o(1)$ in Theorem \ref{ikin} ensures that $r(t)$ be $o(1)$ when $t \to\infty$. This hypothesis can be weakened or withdrawn but the theorem may thus lead to a weaker or trivial upper bound.
\end{remark}

Note that in case $\mathcal{A}$ has a meromorphic continuation on the boundary line, the announced upper bound is nontrivial, since we have the following fact, certainly well-known and folklore, but difficult to trace back as to its origin (c.f. \cite[Exc. 224]{T2} for one direction of the statement).

\begin{proposition}\label{prop:ordermequiv} Let $T$ be a non negative real number and $G$ be a meromorphic function in some open domain containing the closed interval $[-iT,iT]$ with $-iT$ and $iT$ regular points of $G$. With $\eta_G(\sigma,T)$ as above in \eqref{eq:etadef}, the meromorphic function $G$ has poles of order at most $m$ in $(-iT,iT)$, if and only if $\sigma^{m-1}\eta_G(\sigma,T)=O(1)$ as $\sigma\to 0+$. In particular $G$ has no singularities on the interval $[-iT,iT]$ if and only if
$\lim\limits_{\sigma\to 0+} \eta_G(\sigma,T)=0$.\\
Furthermore, if $H=G-P$ where $P$ is the principal part of the function $G$ on $[-iT,iT]$, then we can write
$$
\eta_G(\sigma,T)\leq \sum_{k=0}^{m-1}a_k\sigma^{-k}+\eta_H(\sigma,T),
$$
where $\lim\limits_{\sigma\to 0+} \eta_H(\sigma,T)=0$ and the coefficients $a_k$ can be explicitly computed in terms of the coefficients of the principal part $P$.
\end{proposition}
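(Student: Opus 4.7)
The plan is to decompose $G$ additively as principal part $P$ plus regular remainder $H$ on a neighborhood of $[-iT, iT]$, analyze $\eta_P$ and $\eta_H$ separately to obtain both the upper bound and the two forward implications, and then, for the converse, extract a matching lower bound by localization around a single pole of maximal order.

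First I would note that since $G$ is meromorphic on an open domain containing the compact segment $[-iT, iT]$, it has only finitely many poles $i\tau_1, \dots, i\tau_n$ there, all of them in the \emph{open} interval $(-iT, iT)$ by the hypothesis on the endpoints. Let $P(s) := \sum_{j=1}^n \sum_{k=1}^{m_j} c_{j,k} (s - i\tau_j)^{-k}$ be the sum of the principal parts at these poles, and $H := G - P$, which is then holomorphic on some open neighborhood $U$ of $[-iT, iT]$. By the triangle inequality under the integral, $\eta_G(\sigma, T) \leq \eta_P(\sigma, T) + \eta_H(\sigma, T)$; and for $H$, holomorphy yields $|H(2\sigma + i\tau) - H(\sigma + i\tau)| \leq \sigma \|H'\|_K$ on $\tau \in [-T, T]$ for small $\sigma$, where $K \subset U$ is a fixed compact rectangle around $[-iT, iT]$, so $\eta_H(\sigma, T) \leq 2T\sigma \|H'\|_K \to 0$. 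For a single Laurent term $c(s - i\tau_0)^{-k}$ ($k \geq 1$), the substitution $u = (\tau - \tau_0)/\sigma$ gives
$$\int_{-T}^T \left|\frac{c}{(2\sigma + i(\tau - \tau_0))^k} - \frac{c}{(\sigma + i(\tau - \tau_0))^k}\right| d\tau \leq |c|\sigma^{1-k} C_k, \quad C_k := \int_\RR \bigl|(2+iu)^{-k} - (1+iu)^{-k}\bigr|\, du < \infty.$$
Summing over all Laurent terms of $P$ and setting $m := \max_j m_j$, this yields the desired bound
$$\eta_P(\sigma, T) \leq \sum_{k=0}^{m-1} a_k \sigma^{-k}, \qquad a_k := C_{k+1} \sum_{j \,:\, m_j \geq k+1} |c_{j,k+1}|,$$
which with $\eta_H \to 0$ is the stated inequality; the easy directions of both equivalences follow at once.

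For the converse, I would argue by contradiction: suppose some pole $i\tau_0$ has order exactly $m' \geq m + 1$ with nonzero top Laurent coefficient $c$ (any $m' \geq 1$ for the second equivalence). Pick $\delta > 0$ so small that $(\tau_0 - \delta, \tau_0 + \delta) \subset (-T, T)$ contains no other pole, and split $G = c(s - i\tau_0)^{-m'} + R$; on this interval $R$ is either regular at $i\tau_0$ or has there a pole of order strictly less than $m'$, and by the same single-term calculation its contribution to $\eta$ over $(\tau_0 - \delta, \tau_0 + \delta)$ is $O(\sigma^{2-m'})$. The main term, via the same substitution, contributes
$$|c|\sigma^{1-m'} \int_{-\delta/\sigma}^{\delta/\sigma} \bigl|(2+iu)^{-m'} - (1+iu)^{-m'}\bigr|\, du \;\longrightarrow\; |c|\, C_{m'}\, \sigma^{1-m'}$$
as $\sigma \to 0+$, with $C_{m'} > 0$ since the integrand is a nontrivial, nonnegative, continuous function. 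The reverse triangle inequality then gives $\eta_G(\sigma, T) \geq \tfrac{1}{2}|c| C_{m'} \sigma^{1-m'}$ for $\sigma$ small enough, and hence $\sigma^{m-1}\eta_G(\sigma, T) \gg \sigma^{m - m'} \to \infty$ in the first setting, while in the second setting $\eta_G(\sigma, T) \not\to 0$, both contradicting the hypothesis.

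The step I expect to be most delicate is the last one, namely ensuring that localization around a single pole produces a \emph{positive} lower bound of the sharp order $\sigma^{1-m'}$. This requires that the residual part $R$, which may itself be singular at $i\tau_0$ with order at most $m' - 1$, be genuinely negligible at scale $\sigma^{1-m'}$ — which it is, since its contribution is $O(\sigma^{2-m'}) = o(\sigma^{1-m'})$ by the same substitution applied to its own Laurent terms, but this dominance needs to be written out with care because $R$ is not uniformly bounded near $i\tau_0$.
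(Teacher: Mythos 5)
Your proposal is correct, and its first half coincides with the paper's: you split off the principal part $P$, bound each Laurent term of order $k$ by a constant times $|c|\,\sigma^{1-k}$ (the paper gets $2|a|(j+1)\sigma^{1-j}$ by writing the difference of the two abscissas as an integral of the derivative and invoking its $W_{j+1}$ estimates; your scaling substitution with the constant $C_k$ is the same bound in different clothing), and control $\eta_H$ by $2T\sigma\sup|H'|$ on a compact rectangle. Where you genuinely diverge is the converse (lower-bound) half. The paper localizes on a window of width $\asymp K\sigma$ around a pole $ib$ of order $m$ and writes $G(s)=\al(s-ib)^{-m}(1+h(s))$ with $|h|\le\ve$ on a small rectangle, so that \emph{all} lower-order Laurent terms and the regular part are absorbed multiplicatively with uniform relative error; a pointwise reverse triangle inequality then reduces matters to showing $(1-2\ve)W_m(\si,K\si)-W_m(2\si,K\si)\gg \si^{1-m}$, which requires tuning $K$ and $\ve$ and a separate treatment of $m=1$ versus $m\ge 2$. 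You instead split off only the top Laurent coefficient additively over a window of fixed width $\delta$, evaluate the main term in the scaling limit with the explicit positive constant $C_{m'}$, show the remainder $R$ (lower-order Laurent terms plus a regular part) contributes only $O(\si^{2-m'})=o(\si^{1-m'})$, and apply the reverse triangle inequality at the level of integrals. Your variant buys a case-free argument with no window parameter or $\ve$ to choose; its price is exactly the point you flag, namely justifying that the unbounded remainder $R$ is negligible at scale $\si^{1-m'}$ — which your single-term bound does handle, whereas the paper's multiplicative factorization sidesteps the issue altogether. Both routes are sound and yield the stated equivalences and the explicit bound $\eta_G\le\sum_{k=0}^{m-1}a_k\si^{-k}+\eta_H$.
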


\begin{proof} First let us assume that the meromorphic function $G$ has some pole  $q=ib$, with $-T<b<T$, of order $m\geq 1$. We are to prove that $\sigma^{m-1}\eta_G(\sigma,T)$ is bounded from below by a positive constant.\\
There exists $\alpha\in\CC$ such that for $K>0$ and $\ve>0$, we can find $\delta=\delta(K,\ve)>0$, $p\in\comp^*$ and a function $h$ such that  $|b\pm K\delta|<T$,  $h$ is analytic on $\mathcal{R}:=[0,2\delta]\times [(b-K\delta)i ,(b+K\delta)i ]$ and for any  $s\in\mathcal{R}$, we have $G(s)=\al(s-q)^{-m}(1+h(s))$ and $|h(s)|\leq \ve$.\\
For $0<\sigma<\delta$, we therefore have
\begin{align*}
\eta_G(\sigma,T)&>\int_{b-K\sigma}^{b+K\sigma} \left|G(2\sigma+it)-G(\sigma+it)\right|dt
\\
& \geq
|\al|\int_{-K\sigma}^{K\sigma}(1-\ve)|(\sigma+it)^{-m}|-(1+\ve)|(2\sigma+it)^{-m}|dt
\\
& \geq
2|\al|\left((1-2\ve)W_m(\si,K\si)-W_m(2\si,K\si)\right)
\\
&\geq \frac{2|\al|}{\sigma^{m-1}} \left\{(1-2\ve)\int_{0}^{K}
\frac{du}{(1+u^2)^{m/2}} - 2^{-m+1} \int_{0}^{K/2} \frac{du}{(1+u^2)^{m/2}}
\right\}.
\end{align*}
For $m\geq 2 $ and $\ve\leq 1/4$, we thus have
$$
\eta_G(\sigma,T)\sigma^{m-1}>|\al|\int_{K/2}^{K}(1+u^2)^{-m/2} du,
$$
where the expression on the right hand side is a positive constant, depending only on $\al$, $m$ and $K$.\\
For $m=1$, we have
\begin{align*}
\eta_G(\sigma,T)&\geq 2|\al|
\left\{\int_{K/2}^{K} (1+u^2)^{-1/2} du- 2\ve \int_{0}^{K} (1+u^2)^{-1/2} du \right\}
\\
&\geq 2|\al| \left\{\frac{K}{2\sqrt{1+K^2}}-2\ve(1+\ln{K})\right\}.
\end{align*}
This lower bound is a constant, depending only on $\al, K$ and $\ve$ and strictly positive whenever $K\ge 3$ and $\ve<1/(8\sqrt{2}\ln{K})$.

\bigskip

If we take now a function $H$, analytic on $[-iT,iT]$, then we can choose $\delta>0$ such that $H$ is analytic on $\mathcal{D}:=[0,2\delta] \times [-iT,iT]$
and $K>0$ such that $|H'(s)|\leq K$ for $s\in\mathcal{D}$. For any $\sigma<\delta$ we get
\begin{align*}
\eta_H(\sigma,T)& =\int_{-T}^{T} \left| \int_{\sigma}^{2\sigma} H'(\xi+it) d\xi \right| dt \leq \int_{-T}^{T} \int_{\sigma}^{2\sigma} \left|H'(\xi+it)\right|d\xi dt \leq 2T \sigma K,
\end{align*}
which obviously tends to zero together with $\sigma$. \\
It remains now to calculate the upper estimation of $\eta_Q(\sigma,T)$ for a general singular term $Q(s)=a (s-ib)^{-j}$, where $a\in\CC$, $|b|<T$ and $j\in\NN^*$, in the principal part $P(s)$ of $G(s)$.  We can write
\begin{align*}
\eta_Q(\sigma,T)&\leq \int_{-\infty}^{\infty} \left| \frac{a}{(2\sigma+i(t-b))^j} - \frac{a}{(\sigma+i(t-b))^j} \right|dt
\\
& \leq |a|  j \sigma \int_{-\infty}^{\infty} \frac{dt}{|\sigma+it|^{j+1}}=
2  |a| j\sigma W_{j+1}(\si,\infty)
\leq \frac{2 |a| (j+1)}{\sigma^{j-1}},
\end{align*}
according to \eqref{Wsi} and \eqref{estwallis2}.\\
Summing the contributions of the poles, we get an explicit upper bound of the desired form for $\eta_P(\sigma, T)$ only depending on the coefficients of the principal part $P$ of $G$.
\end{proof}

Theorem \ref{ikin} apply immediately to Dirichlet series with a pole at $s=1$.
\begin{theoreme}[Meromorphic Wiener-Ikehara]\label{ikinmer}
Let $(a_n)_n$ be a sequence of positive real numbers such that the Dirichlet series $\mathcal{A}(s)=\sum_{n\geq 1}a_nn^{-s}$ is convergent for $\sigma>1$ and has a meromorphic continuation on $\sigma\geq 1$ with one single pole at $s=1$, with order $m\in\NN$ and $\al:=\lim_{s\rightarrow 0+}s^{m}\mathcal{A}(s+1)$. With the functions $G(s)$ and $\eta_G(\sigma,T)$ defined in \eqref{eq:Gdefwomega} and \eqref{eq:etadef} above, we have
$$
A(x):=\sum_{n\leq x}a_n=x(\log{x})^{m-1}\left(\frac{\al}{(m-1) !}+O(\rho(x))\right)
$$
with 
\begin{equation}\label{eq:rhoxdef}\notag
\rho(x):=\inf_{T\geq 64}\left(\frac{1}{T}+\frac{1}{\log^{m-1}{x}}~ \eta_G\left(\frac{1}{\log{x}},T\right) \right).
\end{equation}
\end{theoreme}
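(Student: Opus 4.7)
The plan is to reduce Theorem~\ref{ikinmer} to the effective Wiener--Ikehara theorem (Theorem~\ref{ikin}) via the change of variable $t=\log x$. I would set $B(t):=A(e^t)=\sum_{n\le e^t}a_n$, which is nondecreasing, vanishes for $t<0$, and satisfies
$$\int_0^\infty e^{-st}\,dB(t)=\sum_{n\ge1}a_n n^{-s}=\mathcal{A}(s)\qquad(\Re s>1).$$
So $B$ plays the role of "$A$" in Theorem~\ref{ikin}, with the parameter choice $\omega:=m-1\ge 0$.

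The crux is the verification of the hypothesis $\sigma^{\omega}\eta_G(\sigma,T)=o(1)$ for the associated function $G(s)=(s+1)^{-1}\mathcal{A}(s+1)-\alpha/s^{m}$. Near $s=0$ one has $\mathcal{A}(s+1)=\alpha s^{-m}+\sum_{k=1}^{m-1}c_k s^{-k}+(\text{holom.})$, and $(s+1)^{-1}=1-s+O(s^2)$; hence the leading singular term of $(s+1)^{-1}\mathcal{A}(s+1)$ is again $\alpha/s^m$, and subtracting $\alpha/s^m$ brings the order of the pole of $G$ at $s=0$ down to at most $m-1$. Since by hypothesis $\mathcal{A}$ has no other singularity on $\Re s=1$, the function $G$ is meromorphic in a neighbourhood of $[-iT,iT]$ with this as its only pole. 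Applying Proposition~\ref{prop:ordermequiv} to $G$ yields
$$\eta_G(\sigma,T)\le\sum_{k=0}^{m-2}a_k\sigma^{-k}+\eta_H(\sigma,T),\qquad \eta_H(\sigma,T)\to 0\ \ (\sigma\to 0+),$$
and multiplication by $\sigma^{m-1}$ gives $\sigma^{m-1}\eta_G(\sigma,T)\to 0$, as required.

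Theorem~\ref{ikin} then delivers $B(t)=e^t t^{m-1}\bigl(\alpha/\Gamma(m)+O(r(t))\bigr)$ with the function $r$ as defined in \eqref{eq:defR}. Setting $t=\log x$, using $\Gamma(m)=(m-1)!$ and $A(x)=B(\log x)$, I obtain the announced main term with remainder $O(r(\log x))$. To finish it suffices to bound $r(\log x)$ by $\rho(x)$: the only new ingredient in $r(\log x)$ relative to $\rho(x)$ is the summand $(T\log x)^{-m}$, which for $T\ge 1$, $x\ge e$ and $m\ge 1$ satisfies $(T\log x)^{-m}\le 1/T$ and is thus absorbed by the $1/T$-term already present in $\rho$. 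Taking infima over $T\ge 64$ gives $r(\log x)\le 2\rho(x)$, completing the proof.

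The main point to watch is the pole-cancellation step: one must confirm that the plain subtraction of $\alpha/s^m$, without any correction involving the next Laurent coefficients $c_k$, really suffices to reduce the pole order by one --- this hinges on the fact that the factor $(s+1)^{-1}$ contributes $1$ (and not $0$) to the top-degree coefficient of the Laurent product at $s=0$. Once this is in place, the rest is a direct synthesis of Theorem~\ref{ikin} and Proposition~\ref{prop:ordermequiv}, with the comparison between $r(\log x)$ and $\rho(x)$ being entirely routine.
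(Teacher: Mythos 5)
Your proof is correct and follows essentially the same route the paper intends: the paper gives no separate argument, treating Theorem \ref{ikinmer} as an immediate application of Theorem \ref{ikin} to $A(e^t)$ with $\omega=m-1$, with Proposition \ref{prop:ordermequiv} supplying exactly the bound $\eta_G(\sigma,T)\le\sum_{k=0}^{m-2}a_k\sigma^{-k}+\eta_H(\sigma,T)$ that verifies the hypothesis $\sigma^{m-1}\eta_G(\sigma,T)=o(1)$. Your explicit verification of the pole-order reduction after subtracting $\alpha/s^{m}$ and the absorption of the $(T\log x)^{-m}$ term into $1/T$ are precisely the routine details left to the reader.
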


We get a result similar to Theorem \ref{ikinmer} also for complex sequences majorized by "good" sequences.
\begin{corollary}\label{cormajdir}
Let $(a_n)$ be a sequence of complex numbers and $(b_n)$ be a sequence of positive numbers such that the Dirichlet series $\mathcal{A}(s)=\sum_{n\geq 1}a_nn^{-s}$ and $\mathcal{B}(s)=\sum_{n\geq 1}b_nn^{-s}$ are absolutely convergent for $\sigma>1$ and have meromorphic continuations on $\sigma\geq 1$ with at most one single pole, simple, at $s=1$,
with residue $\alpha$ and $\beta$ respectively. We assume further that $|a_n|\leq b_n$ and we define the functions
$$
G_a(s)=\frac{1}{s+1}\mathcal{A}(s+1)-\frac{\alpha}{s}, \qquad
G_b(s)=\frac{1}{s+1}\mathcal{B}(s+1)-\frac{\beta}{s}
$$
and the associated $\eta$ functions $\eta_a$ and $\eta_b$ the way we have done in \eqref{eq:Gdefwomega}-\eqref{eq:etadef}.\\
Then we have
$$A(x):=\sum_{n\leq x}a_n=\alpha x+O( x\rho(x))$$
with
$$\rho(x)=\inf_{T\geq 64}\left(\frac{1}{T}+\eta_b\left(\frac{1}{\log{x}},T\right)+\eta_a\left(\frac{1}{\log{x}},T\right)\right).$$
\end{corollary}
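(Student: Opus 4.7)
The plan is to reduce the complex-coefficient setting to the nonnegative-coefficient Theorem \ref{ikinmer} (in the case $m=1$) by decomposing $a_n$ into real and imaginary parts and majorizing each sign by $b_n$. First, I would introduce the companion Dirichlet series $\mathcal{A}^*(s) := \sum_n \overline{a_n}\,n^{-s}$, which on $\Re s > 1$ satisfies the Schwarz reflection identity $\mathcal{A}^*(s) = \overline{\mathcal{A}(\bar s)}$. Therefore $\mathcal{A}^*$ inherits the meromorphic continuation of $\mathcal{A}$ to a neighborhood of $\Re s = 1$, with a simple pole at $s=1$ of residue $\overline{\alpha}$.

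Next, I form the four sequences $p_n^{\pm} := b_n \pm \Re(a_n)$ and $q_n^{\pm} := b_n \pm \Im(a_n)$. The hypothesis $|a_n|\leq b_n$ ensures each is nonnegative, and their Dirichlet series are
\[
\mathcal{P}^{\pm}(s) = \mathcal{B}(s) \pm \tfrac{1}{2}\bigl(\mathcal{A}(s) + \mathcal{A}^*(s)\bigr),\qquad
\mathcal{Q}^{\pm}(s) = \mathcal{B}(s) \pm \tfrac{1}{2i}\bigl(\mathcal{A}(s) - \mathcal{A}^*(s)\bigr),
\]
each with a single simple pole at $s=1$ of residue $\beta \pm \Re(\alpha)$, respectively $\beta \pm \Im(\alpha)$. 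Applying Theorem \ref{ikinmer} to each of these four nonnegative sequences produces $\sum_{n\leq x} p_n^{\pm} = (\beta \pm \Re(\alpha))\,x + O(x\,\rho_{p^{\pm}}(x))$ and the analogous statements for $q_n^{\pm}$. Taking half-differences, $\tfrac{1}{2}(p_n^+ - p_n^-) = \Re(a_n)$ and $\tfrac{1}{2}(q_n^+ - q_n^-) = \Im(a_n)$, recovers the real and imaginary parts of $A(x)$ and yields $A(x) = \alpha x + O\bigl(x(\rho_{p^+}+\rho_{p^-}+\rho_{q^+}+\rho_{q^-})(x)\bigr)$.

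The last step is to bound each of the four error functions by $\rho(x)$. By linearity of the $G$-construction, $G_{p^{\pm}} = G_b \pm \tfrac{1}{2}(G_a + G_{a^*})$, where $G_{a^*}(s) := \tfrac{1}{s+1}\mathcal{A}^*(s+1) - \tfrac{\overline{\alpha}}{s}$, with an analogous formula for $G_{q^{\pm}}$ involving the factor $1/(2i)$. The triangle inequality in \eqref{eq:etadef} then gives $\eta_{p^{\pm}}, \eta_{q^{\pm}} \leq \eta_b + \tfrac{1}{2}(\eta_a + \eta_{a^*})$. Finally, the Schwarz reflection $G_{a^*}(s) = \overline{G_a(\bar s)}$ combined with the substitution $\tau \mapsto -\tau$ in \eqref{eq:etadef} forces $\eta_{a^*}(\sigma,T) = \eta_a(\sigma,T)$. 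Plugging this back in and taking the infimum over $T\geq 64$ produces the announced bound in terms of $\rho(x)$.

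The main obstacle is essentially one of bookkeeping: checking that the four parallel applications of Theorem \ref{ikinmer} combine without introducing any new $\eta$-quantity beyond $\eta_a$ and $\eta_b$. The identity $\eta_{a^*} = \eta_a$ furnished by Schwarz reflection is precisely what prevents the reflected series from polluting the final error term, and is the one step that must be verified carefully.
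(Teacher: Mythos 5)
Your proposal is correct and follows essentially the same route as the paper, which simply takes the two nonnegative sequences $\Re a_n+b_n$ and $\Im a_n+b_n$, applies Theorem \ref{ikinmer} (with $m=1$) to them and to $b_n$, and subtracts -- a trivial bookkeeping variant of your four sequences $b_n\pm\Re a_n$, $b_n\pm\Im a_n$ with half-differences. The Schwarz-reflection identity $\eta_{a^*}=\eta_a$ that you verify is exactly the (implicit) observation needed in the paper's version as well, so nothing new beyond $\eta_a$ and $\eta_b$ enters the error term in either treatment.
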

To prove this corollary, it is enough to consider $a_n^{(1)}:=\Re{a_n}+b_n$ and  $a_n^{(2)}:=\Im{a_n}+b_n$. This had already been noticed for the non effective Wiener-Ikehara Theorem in \cite[p.7]{M} by M.R and V.K Murty and can be found also in the context of the Karamata Theorem in \cite[Th. 7.7, page 318]{T2}.\\
However, there is no satisfactory description of the class of Dirichlet series with their coefficients admitting a positive majorization by some sequence subject to the Wiener-Ikehara Theorem. Moreover, even if such a majorization is found, from the analytic behavior of the majorant, no consequence as to the analytic behavior of the original Dirichlet series can be drawn. For example, the sequence $a_n:=2\cos(\log n) \Lambda(n)$ (which is the coefficient sequence of the Dirichlet series $\zeta'/\zeta (s+i) + \zeta'/\zeta (s-i)$), is clearly majorized by $2 \Lambda(n)$, the coefficient sequence of $2 \zeta'/\zeta$, which has no singularities on the 1-line (apart from the simple pole at $s=1$), but the original Dirichlet series admits two other singularities at $1\pm i$, too.
Therefore, it seems that descriptive formulation of such majorization results is not convenient, and that the case dependent study of the situation (singularities, which
majorization to use, etc.) is more appropriate.

\bigskip
We can now state our first main result which gives an effective error term for the asymptotic estimate of a moderately decreasing function.
\begin{theoreme}\label{th:kle}
Let $A$ be a real function of locally bounded variation,
vanishing on $(-\infty,1)$, such that the Mellin transform
$\mathcal{A}(s):=\int_{1}^{\infty} x^{-s} dA(x)$ is convergent for
$\sigma>1$. We define the function
\begin{equation}\label{eq:Gdefwresidues}
G(s)=\frac{1}{s+1}\mathcal{A}(s+1) -\sum_{k=0}^{n}\sum_{l=1}^{m_k}\left(\frac{c_{k,l}}{(s+ib_k)^{\omega_{k,l}+1}}+\frac{\overline{c_{k,l}}}{(s-ib_k)^{\omega_{k,l}+1} }\right),
\end{equation}
where $n$ and $m_k$ are positive integers, $c_{k,l}$ are complex numbers, $\omega_{k,l}$ and $b_k$ are real numbers satisfying $b_0=0$ and $b_k>0$ if $k\geq 1$ and $\omega_{k,l}>-1$. Furthermore, for $T>0$ and $\sigma>0$, we also define the function
\begin{equation}\label{eq:etadefwres}
\eta(\sigma,T)=\int_{-T}^{T}\left\vert G(2\sigma+i\tau)-G(\sigma+i\tau)\right\vert d\tau.
\end{equation}

Assume that the function $A$ is moderately decreasing on $[0,+\infty)$ in the sense of Definition \ref{def:moderatedec}.

Then for $x\geq  x_0(T):=\max(e^{16/15}; e^{32/T})$, we have
\begin{equation}\label{eq:Aasymptotic}
\left\vert A(x)
-x\sum_{k=0}^{n}\sum_{l=1}^{m_k}\frac{(\log{x})^{\omega_{k,l}}}{\Gamma(\omega_{k,l}+1)}2\Re\left(c_{k,l}e^{-ib_k\log{x}}\right)
\right\vert
\ll  x\rho_T(x)
\end{equation}
where the implicit constant depends only on $B_1$, $B_2$, $c_{k,l}$, $\Om:=\max_{k,l} \omega_{k,l}$, $b_k$ and $\|A_-\|_{[1,e]}$, and where  with $\omega:=\min_{k,l}\omega_{k,l}$ and $\Om:=\max_{k,l} \omega_{k,l}$,
\begin{align}\label{eq:rhodef}
\rho_T(x) =\sqrt{\varphi(\sqrt{x})}+\max_{\kappa=1/2 ,1} & \Bigg\{ 
\left(\frac{e^{10/T}}{T}+1\right)\eta\left(\frac{1}{\log{x}},T\right)+
\\ & \frac{e^{10/T}}{T} \log^{\Om_{+}} x +\frac{1}{\log x} \left(\frac{1}{T^{\Om+1}}+\frac{1}{T^{\omega+1}}\right)\Bigg\}^\kappa , \notag
\end{align}
with $\varphi$ being the function introduced in \eqref{eq:moderatedec} in Definition \ref{def:moderatedec}.
\end{theoreme}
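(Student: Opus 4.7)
The plan is to apply the Tenenbaum-type estimate of Lemma~\ref{l:T} to a carefully chosen auxiliary function which is simultaneously a second-order ``difference'' transform of $A$ and an explicit perturbation of the expected main term. For $u\geq 0$, I set $a(u):=A(e^u)/e^u$ and
\[
m(u):=\sum_{k,l}\frac{u^{\omega_{k,l}}}{\Gamma(\omega_{k,l}+1)}\,2\,\Re\bigl(c_{k,l}e^{-ib_k u}\bigr),\qquad \phi_\sigma(u):=e^{-\sigma u}-e^{-2\sigma u},
\]
and define $h_\sigma(u):=\phi_\sigma(u)[a(u)-m(u)]$ for $u\geq 0$, extended by $0$ for $u<0$. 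The key identity $\sigma^{-\omega}\beta(\omega,\sigma u)=u^\omega\phi_\sigma(u)/\Gamma(\omega+1)$ (from the very definition of $\beta$ in Lemma~\ref{l:beta}) converts the ``residue'' part of $h_\sigma$ into a sum of $\beta$-terms; a direct computation of the Fourier transform then yields $\widehat{h_\sigma}(\tau)=G(\sigma+i\tau)-G(2\sigma+i\tau)$, so in particular $\int_{-T}^T|\widehat{h_\sigma}|\,d\tau=\eta(\sigma,T)$.

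Next I aim to verify the hypothesis of Lemma~\ref{l:T} for $g=h_\sigma$ with $\Phi(u):=\varphi(e^{|u|})$ (nonincreasing on $[0,\infty)$, even, and $\Phi(0)=\varphi(1)=1$) and a constant $b=b(\sigma,T)$ to be identified. Writing
\[
h_\sigma(u+y)-h_\sigma(u)=[\phi_\sigma(u+y)a(u+y)-\phi_\sigma(u)a(u)]-[\phi_\sigma(u+y)m(u+y)-\phi_\sigma(u)m(u)],
\]
the first bracket splits as $F(u)[A(e^{u+y})-A(e^u)]+A(e^{u+y})[F(u+y)-F(u)]$ with $F(u)=\phi_\sigma(u)e^{-u}$. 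The moderate-decrease condition~\eqref{eq:moderatedec} gives the lower bound $A(e^{u+y})-A(e^u)\geq-B_1 e^u(e^y-1)-B_2 e^u\varphi(e^u)$, which upon multiplying by $F(u)$ and using $y\leq 1/T$ contributes a $B_2\varphi(e^u)$ term (absorbed into $\Phi(u)$) and a $B_1 e^{10/T}/T$ term (absorbed into $b$). The residual $A(e^{u+y})[F(u+y)-F(u)]$ is controlled by $|F(u+y)-F(u)|\ll y e^{-u}$ combined with an a~priori upper bound $|A(x)|\ll x(\log x)^{\Omega_+}$ imported from Corollary~\ref{c:localbddnessbyeta} applied with the $\eta$-bound implicit in $\rho_T$; this is where the $(e^{10/T}/T)\log^{\Omega_+}x$ summand in $b$ arises. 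For the $m$-bracket I use Lemma~\ref{l:beta}: the bound $|\beta(\omega,x+y)-\beta(\omega,x)|\leq y^{\omega+1}+2y/\sqrt{\pi}$ yields $|\sigma^{-\omega}[\beta(\omega,\sigma(u+y))-\beta(\omega,\sigma u)]|\ll\sigma y^{\omega+1}+\sigma^{1-\omega}y$, which for $y\leq 1/T$ and the forthcoming $\sigma=1/\log X$ produces the $(T^{-\Omega-1}+T^{-\omega-1})/\log x$ term in $b$.

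Once these estimates are assembled, Lemma~\ref{l:T} delivers
\[
|h_\sigma(u)|\leq 20\Bigl(\sqrt{\Phi(u/2)}+\sqrt{b+\eta(\sigma,T)/\pi}+b+\eta(\sigma,T)/\pi\Bigr)\qquad(u>x_0(T,b)).
\]
I then specialise $\sigma:=1/\log X$ and $u_0:=\log X$. The crucial feature of this choice is $\phi_\sigma(u_0)=e^{-1}(1-e^{-1})$, a positive constant, so $|a(u_0)-m(u_0)|\ll|h_\sigma(u_0)|$. Recognising $\sqrt{\Phi(u_0/2)}=\sqrt{\varphi(\sqrt X)}$ and collecting the remaining terms as $\max_{\kappa=1/2,1}\{\cdot\}^\kappa$ recovers exactly $\rho_T(X)$; multiplying through by $e^{u_0}=X$ then furnishes the announced estimate~\eqref{eq:Aasymptotic}. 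The threshold $X\geq\max(e^{16/15},e^{32/T})$ corresponds precisely to $u_0\geq x_0(T,b)$ in Lemma~\ref{l:T}.

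The main obstacle is the slow-variation verification in the second step: moderate decrease supplies only a one-sided lower bound on $A(e^{u+y})-A(e^u)$, so an a~priori \emph{upper} bound on $|A(x)|/x$ must be imported from Corollary~\ref{c:localbddnessbyeta} in order to control the positive-jump contribution $F(u)[A(e^{u+y})-A(e^u)]$ — and this is exactly where the $\log^{\Omega_+}x$ factor enters $b$, and in turn $\rho_T$. The remaining ingredients, namely the $\beta$-calculus of Lemma~\ref{l:beta}, the $\phi_\sigma$-difference bookkeeping, and the final invocation of Lemma~\ref{l:T}, are essentially mechanical once the decomposition is set up.
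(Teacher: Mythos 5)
Your setup coincides with the paper's: your $h_\sigma$ is exactly the function $L_\sigma(t)=g_\sigma(t)-2\sum_{k,l}\Re\bigl(c_{k,l}e^{-ib_kt}\bigr)\sigma^{-\omega_{k,l}}\beta(\omega_{k,l},\sigma t)$ used there, the Fourier identity $\widehat{L_\sigma}=G(\sigma+i\cdot)-G(2\sigma+i\cdot)$, the use of Lemma \ref{l:beta} for the $\beta$-differences, the application of Lemma \ref{l:T}, and the final choice $\sigma=1/\log x$ are all the same. The problem is in the step you yourself single out as the main obstacle. To control the term $A(e^{u+y})\bigl[F(u+y)-F(u)\bigr]$ you import the a priori bound $|A(x)|\ll x\log^{\Omega_+}x$ from Corollary \ref{c:localbddnessbyeta}, ``applied with the $\eta$-bound implicit in $\rho_T$''. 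But there is no $\eta$-bound implicit in $\rho_T$: Theorem \ref{th:kle} makes \emph{no} hypothesis on the size of $\eta(\sigma,T)$ — the quantity $\eta(1/\log x,T)$ simply appears as a term of the error, and the theorem is asserted (and proved in the paper) unconditionally, however large $\eta$ may be. Corollary \ref{c:localbddnessbyeta}, by contrast, requires the hypothesis $\eta(\sigma,T)\leq K_2\,\sigma^{-(m-1)_+}$ for all $0<\sigma<1$; invoking it turns your argument into a proof of a weaker, conditional statement (essentially the hypothesis \eqref{eq:etacondifin} that the paper imposes only in the slowly decreasing Theorem \ref{th:kle1}, whose proof indeed does invoke that corollary), and it would also smuggle into the implied constant dependencies on $K_2$, $T$, $\AAA(2)$ and $\|A\|_{[1,e]}$ that the statement of Theorem \ref{th:kle} explicitly excludes (only $B_1,B_2,c_{k,l},\Omega,b_k,\|A_-\|_{[1,e]}$ are allowed, with all $T$-dependence carried explicitly inside $\rho_T$).

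The missing idea is how the paper gets the needed upper control without any assumption on $\eta$: it writes the second term as $g_\sigma(u)F_2(u,v)$, i.e.\ bounds it by $v\,\|g_\sigma\|_\infty$ (plus a $\|A_-\|_{[1,e]}$ contribution for small $u$), and then bounds $\|g_\sigma\|_\infty$ \emph{internally} by the first part of Lemma \ref{lemf}: moderate decrease gives the hypothesis \eqref{condlemf} for $h_\sigma$ with $\lambda_2=5\max(B_1,B_2)e^{10/T}$, $\lambda_1=\tfrac45e^{-20/T}$, $\lambda_3=B_1+B_2$, and the Fourier estimate $\int_{-T}^{T}|\widehat{g_\sigma}|\leq\eta(\sigma,T)+12\sum_{k,l}|c_{k,l}|\sigma^{-\omega_{k,l}}$ then yields $\|g_\sigma\|_\infty\ll B_1+B_2+\eta(\sigma,T)+\sum_{k,l}|c_{k,l}|\sigma^{-\omega_{k,l}}$ with no extra hypotheses. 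Multiplying by $v\leq 1/T$ and setting $\sigma=1/\log x$ is precisely what produces the terms $\bigl(e^{10/T}/T\bigr)\eta(1/\log x,T)$ and $\bigl(e^{10/T}/T\bigr)\log^{\Omega_+}x$ in \eqref{eq:rhodef}. Without this (or an equivalent unconditional bound on $\sup_u|g_\sigma(u)|$), the positive-jump contribution in your decomposition is not controlled, so as written your proof has a genuine gap; repaired via Lemma \ref{lemf} it becomes the paper's proof.
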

\begin{remark}
The upper bound we get only has a chance to be small when $T$ is large. In case $T\geq T_0>0$
we get an upper bound of the form
\begin{align*}
\rho_T(x) &=\sqrt{\varphi(\sqrt{x})}+\max_{\kappa=1/2 ,1} \left\{\eta\left(\frac{1}{\log{x}},T\right)+ \frac{1}{T^{\omega+1}\log{x}} + \frac{\log^{\Om_+} x}{T}\right\}^\kappa,
\end{align*}
where the implicit constant depends on the previous parameters and on $T_0$.
\end{remark}
\begin{remark}
Korevaar gives some tauberian theorem for a similar type of functions in \cite{Ko}, p.381-382 but his result seems less general than ours.
\end{remark}

\begin{proof}
Our proof combines the arguments of the proof of Proposition III.10.2 in \cite{Ko} and of the proof of Th\'eor\`eme II.7.13 in \cite{T2} with our Lemma \ref{l:T}.

We define for  $t\in\reel$ and $\si\in(0,1)$, the functions
$$g_\sigma(t):=h_\si(t)-h_{2\si}(t)=h_\sigma(t)(1-e^{-\sigma t})=A(e^t)e^{-(1+\sigma)t}(1-e^{-\sigma t})$$
and with $\beta(\omega,t)$ defined in \eqref{eq:betadef} also
\begin{align*}
L_\sigma(t):=g_\sigma(t)
&-2\sum_{k=0}^{n}\sum_{l=1}^{m_k}\Re\left(c_{k,l}e^{-ib_kt}\right)\si^{-\omega_{k,l}}\beta(\omega_{k,l},\sigma t).
\end{align*}
We shall use Lemma \ref{l:T} to give an upper bound for the function $L_\sigma$.
Let  $T>0$. We shall need an upper bound for the integral
$$\int_{-T}^{T}|\widehat{L_\sigma}(\tau)|d\tau \quad \mbox{ hence for the integral }\quad \int_{-T}^{T}|\widehat{g_\sigma}(\tau)|d\tau$$
and a lower bound for the difference $L_\sigma(u+v)-L_\si(u)$, hence of  $g_\sigma(u+v)-g_\si(u)$, for arbitrary values of $\sigma\in(0,1)$, $u\in\reel$ and $0\leq v\leq 1/T$.

\bigskip
The Fourier transform of $g_\sigma$ is
\begin{align*}
\widehat{g}_\sigma(\tau)& =\frac{\mathcal{A}(\sigma+i\tau+1)}{\sigma+i\tau+1}
- \frac{\mathcal{A}(2\sigma+i\tau+1)}{2\sigma+i\tau+1}
=G(\sigma+i\tau)-G(2\sigma+i\tau) \notag
\\&+\sum_{k=0}^{n}\sum_{l=1}^{m_k}\left(c_{k,l}\left(\frac{1}{(\si+i(\tau+b_k))^{\omega_{k,l}+1}}
-\frac{1}{(2\si+i(\tau+b_k))^{\omega_{k,l}+1}}\right)\right.\notag 
\\&\quad \quad\quad\quad\left.+~\overline{c_{k,l}}\left(\frac{1}{(\si+i(\tau-b_k)^{\omega_{k,l}+1} }- \frac{1}{(2\si+i(\tau-b_k)^{\omega_{k,l}+1} }\right)\right),\notag
\end{align*}
hence those of $L_\sigma$ is
$\widehat{L_\sigma}(\tau)=G(\sigma+i\tau)-G(2\sigma+i\tau).$
Therefore
\begin{align}\label{Lsigmachapeau}
\int_{-T}^{T}|\widehat{L_\sigma}(\tau)|d\tau =\eta(\sigma,T).
\end{align}
For any $b \in \RR$ and $-1<\om\leq 0$ we can infer
$$
\left\vert\frac{1}{(\si+i(\tau\pm b))^{\omega+1}}-\frac{1}{(2\si+i(\tau \pm b))^{\omega+1}}\right\vert\leq \frac{(\omega+1)\sigma}{|\sigma+i(\tau\pm b)|^{\omega+2}},
$$
expressing the difference by an integral, and so \eqref{defwallis}, \eqref{Wsi} and \eqref{estwallis2} leads to
$$
\int_{-T}^{T} \left\vert\frac{1}{(\si+i(\tau\pm b))^{\omega+1}}-\frac{1}{(2\si+i(\tau \pm b))^{\omega+1}} \right\vert d\tau \leq 2 (\omega+2)\si^{-\omega}\leq 6\si^{-\omega},
$$
while for any $b\in \RR$ and $\om>0$ we integrate the terms separately and apply \eqref{defwallis}, \eqref{Wsi} and \eqref{estwallis2} with $m=\om+1>1$ giving the estimate $\leq 2 (1+2^{-\om}) \frac{\omega+1}{\omega} \si^{-\omega} \leq 6\si^{-\omega}$ for the integral on the left hand side.
So in all cases we have obtained an estimate by $6\si^{-\om}$. Integrating the above expression for $\widehat{g}_\sigma(\tau)$ and using this estimate with $\om=\om_{k,l}$ and $b=b_k$ leads to
\begin{align}\label{gsigmachapeau}
\int_{-T}^{T}|\widehat{g_\sigma}(\tau)|d\tau\leq &\eta(\sigma,T)
+ \sum_{k=0}^{n}\sum_{l=1}^{m_k}|c_{k,l}| ~12 ~ \si^{-\omega_{k,l}}.
\end{align}

Next we give a lower bound for the difference $L_\sigma(u+v)-L_\si(u)$, hence of  $g_\sigma(u+v)-g_\si(u)$, for arbitrary values of $\sigma\in(0,1)$, $u\in\reel$ and $0\leq v\leq 1/T$.\\
Since $A$ vanishes on $(-\infty,1)$, we assume $u+v\geq 0$. For $\sigma\in(0,1)$, $u\in\reel$ and $0\leq v\leq 1/T$, we have
\begin{align*}
g_\sigma(u+v)-g_\sigma(u) =F_1(u,v)+g_\sigma(u)F_2(u,v)
\end{align*}
with
\begin{align*}
F_1(u,v)&=e^{-(1+\sigma)(u+v)}\left(1-e^{-\sigma(u+v)}\right)\left( A\left(e^{u+v}\right)-A\left(e^u\right)\right)\\
F_2(u,v)&=e^{-(1+\sigma)v} \frac{e^{-\sigma u}-e^{-\sigma(u+v)}}{1-e^{-\sigma u}}+e^{-(1+\sigma)v}-1.
\end{align*}

\begin{itemize}
\item  First, we give a lower bound for $g_\si(u)F_2(u,v)$.
\begin{itemize}
\item For $u<0$, $g_\si(u)F_2(u,v)$ obviously vanishes together with $A(e^u)$ and $g_\si(u)$, thus $g_\sigma(u)F_2(u,v)=0$.
\item Note that in case $u=0$, $F_2(u,v)$ is not defined but $$g_\sigma(u)F_2(u,v)=A(1)e^{-(1+\sigma)v} \left(1-e^{-\sigma v}\right)\geq -\si v\|A_{-}\|_{[1,e]}.$$
\item For any $\sigma\in(0,1)$, $u>0$ and $v\geq 0$, we have
$$F_2(u,v)=e^{-(1+\sigma)v} \frac{1-e^{-\sigma v}}{e^{\sigma u}-1}+e^{-(1+\sigma)v}-1,$$
 thus
$$
-2v\leq -(1+\si) v\leq -(1-e^{-(1+\si) v})\leq F_2(u,v)\leq \frac{1-e^{-\si v}}{e^{\si u}-1}\leq \frac{v}{u},
$$
and therefore $F_2(u,v)g_\sigma(u)>-2v\|g_\sigma\|_\infty$ if $A(e^u)\geq 0$.\\
If $A(e^u)< 0$ and $u\geq 1$, we use $F_2(u,v)\leq v$, thus $F_2(u,v)g_\sigma(u)>-v\|g_\sigma\|_\infty$.\\
In case $0<u<1$ and $A(e^u)< 0$, we write
$$F_2(u,v)g_\si(u) = A(e^u) (f(u+v)-f(u))\geq-\{A(e^u)\}_{-} |f(u+v)-f(u)|$$
with $f(u)=e^{-(1+\si)u}(1-e^{-\si u})$. Since
$\sup_{u\geq 0}|f'(u)|\leq 1+\si\leq 2$, we get
$$F_2(u,v)g_\si(u)\geq -2v\{A(e^u)\}_{-}\geq -2v\|A_{-}\|_{[1,e]} .$$
\end{itemize}
Therefore, for $u\in \RR$, $\si\in (0,1)$ and $v\geq 0$, we always have
$$
F_2(u,v)g_\sigma(u)\ge -2v \left( \|A_{-}\|_{[1,e]}+\|g_\sigma\|_\infty\right).
$$
\item To deal with the first term $F_1(u,v)$, we use \eqref{eq:moderatedec}.
\begin{itemize}
\item We therefore get for $\sigma\in(0,1)$, $u\geq 0$ and $v\geq 0$
\begin{align*}
F_1(u,v)\geq -\{F_1(u,v)\}_{-} & =- (1-e^{-\sigma(u+v)}) \frac{\{A(e^{u+v})-A(e^u)\}_{-}}{e^{(1+\sigma)(u+v)}}  \\ & \geq -e^{-(u+v)}\{A(e^{u+v})-A(e^u)\}_{-}\\
&\geq -B_1 (1-e^{-v})-B_2\varphi(e^u)e^{-v}\\
&\geq -B_1 v-\Phi(u),
\end{align*}
with $\Phi(u)=B_2\varphi(e^u)$.
\item For $\sigma\in(0,1)$, $u< 0$ and $u+v\geq 0$, we use $A(e^{u+v})-A(e^u)=A(e^{u+v})=A(e^{u+v})-A(0)$ and get
\begin{align*}
F_1(u,v)& \geq - (1-e^{-\sigma(u+v)}) \frac{\{A(e^{u+v})\}_{-}}{e^{(1+\sigma)(u+v)}}  \\ & \geq -(1-e^{-\sigma(u+v)}) \left(e^{-(u+v)}\{A(e^{u+v})-A(0)\}_{-}\right)\\
&\geq -(1-e^{-\sigma(u+v)}) \left(B_1 +B_2e^{-(u+v)}\right)\\
&\geq -(B_1+B_2)(1-e^{-\sigma(u+v)})\\
&\geq -(B_1+B_2)\sigma(u+v)\\
&\geq  -(B_1+B_2)v
\end{align*}
since $0\leq u+v\leq v$ and $\si<1$.
\end{itemize}
\end{itemize}
For $\sigma\in(0,1)$, $u\in\RR$ and $v\geq 0$, the above lower estimates yield
\begin{equation}\label{eq:gdiff0}
g_\sigma(u+v)-g_\sigma(u) \geq -2v\|g_\sigma\|_\infty -B' v-\Phi(u),
\end{equation}
where $B'=B_1+B_2+2\|A_{-}\|_{[1, e]}$ and $\Phi$ is defined by $\Phi(u):=B_2\varphi(e^{|u|})$.

\bigskip

We shall now use Lemma \ref{lemf} to give an upper bound for $\| g_\si\|_\infty$.

Since $A$ is moderately decreasing, \eqref{minF} provides for any real number $u$ the estimate
$A(e^u)e^{-u}\geq -\lambda_3$ with $\lambda_3=B_1+B_2$. Consequently,
$g_\sigma(u)\geq -\lambda_3$.

Let $\lambda_2$ be a positive fixed constant and define as in the proof of Theorem  \ref{th:kleTfixe},
$$S_0(\lambda_2):=\{ u\in \RR ~:~ A(e^u) >\lambda_2 e^u \}.$$
Note that here, we do not  assume $u\geq 0$. Nevertheless, for $u<0$, we have $A(e^u)=0$ thus obviously $u\notin S_0(\lambda_2)$ and also $g_\si(u)=0$. Therefore, if $u\not\in S_0(\lambda_2)$, then $g_\si(u)\leq \lambda_2$ both for $u<0$ and for $u\geq 0$, too.

We now choose $\lambda_2=5\max(B_1,B_2)e^{10/T}$ as before (preceding \eqref{minhplus}). Then using the first inequality of \eqref{minhplus}, we get for $u\in S_0(\lambda_2)\setminus\{0\}$, $v\in[0,10/T]$ and $0<\sigma<\sigma_0:=\min\left(1,\frac{\log 2}{10}T\right)$,
\begin{align*}
g_\si(u+v)& =(1-e^{-\si(u+v)})h_\si(u+v) \geq (1-e^{-\si(u+v)}) \frac45e^{-(1+\si)10/T} h_\si(u) \\
&= \frac45e^{-\si10/T} e^{-10/T} \frac{1-e^{-\si(u+v)}}{1-e^{-\si u}}g_\sigma(u)
\geq \frac25e^{-10/T} g_\sigma(u).
\end{align*}
Note that the estimate remains valid when $u=0$.\\
Now we can apply (the first part of) Lemma \ref{lemf} similarly as in the proof of Theorem \ref{th:kleTfixe} and with $\lambda_1:=\dfrac25 e^{-10/T}$ and $\lambda_2, \lambda_3$ chosen above. Taking into account \eqref{gsigmachapeau} we infer
\begin{align*}
\|g_\sigma\|_\infty
&\leq e^{ 10/T}\left(5\lambda_3 +  0.55 \int_{-T}^{T} \left|\widehat{g_\sigma}(\tau)\right| d\tau\right)
\\ &\leq e^{ 10/T}\left(5\lambda_3 +  0.6 \eta(\sigma,T)+6.6\sum_{k=0}^{n}\sum_{l=1}^{m_k}|c_{k,l}| \si^{-\omega_{k,l}}\right).
\end{align*}
Thus with \eqref{eq:gdiff0}, we get for $0<\sigma<\si_0$, $u\in \RR$ and $0<v<1/T$
\begin{align}\label{eq:gdiff}
g_\sigma(u+v)-g_\sigma(u) &\geq
-\frac{e^{ 10/T}}{T}\left(13.2\sum_{k=0}^{n}\sum_{l=1}^{m_k}|c_{k,l}| \si^{-\omega_{k,l}}\right.\\
\notag&+10\lambda_3 +  1.2 \eta(\sigma,T)\bigg) -\frac{B'}{T}-\Phi(u).
\end{align}

\bigskip

We shall now give a lower bound for the difference $L_\sigma(u+v)-L_\si(u)$. For any $b, \gamma \in \RR$ and $-1<\om$ Lemma \ref{l:beta} yields with $y:=\si v \in [0,1/T]$
\begin{align*}
&\left\vert  \beta\left(\omega,\sigma(u+v)\right)\cos(b(u+v)+\gamma)-\beta(\omega,\sigma u)\cos(bu+\gamma)\right\vert
\\ & \leq \left\vert  \beta(\omega,\sigma(u+v))- \beta(\omega,\sigma u) \right\vert + |\cos(b(u+v)+\gamma)- \cos(b u+\gamma)|\cdot |\beta(\omega,\sigma u)| 
\\ & \qquad \qquad \qquad \qquad \qquad \qquad \qquad \leq \frac{2}{\sqrt{\pi}}\left(\frac{\sigma}{T}+ \left(\frac{\sigma}{T}\right)^{\omega+1}+\frac{|b|}{T}\right).
\end{align*}
Applying this with $\gamma:=\arg{c_{k,l}}$, $b:=b_k$ and $\om:=\om_{k,l}$ gives
\begin{align}\label{eq:Ldifference}
L_\sigma(u+v)-L_\sigma(u) &\geq  g_\sigma(u+v)-g_\si(u)
\\& - \frac{4}{\sqrt{\pi}}\sum_{k=0}^{n}\sum_{l=1}^{m_k}|c_{k,l}|\left(\frac{\sigma}{T}+
\left(\frac{\sigma}{T}\right)^{\omega_{k,l}+1}+\frac{|b_k|}{T}\right)\si^{-\omega_{k,l}}. \notag
\end{align}
Taking \eqref{eq:gdiff} into account, a calculation gives for all $u\in\reel$, $0\leq v\leq 1/T$ and $0<\sigma\leq \si_0$
\begin{align*}
L_\sigma(u+v)-L_\sigma(u)
& \geq   -\frac{C_3(\si,T)}{T}  -\Phi(u) -\frac{4}{\sqrt\pi}\si\sum_{k=0}^{n}\sum_{l=1}^{m_k}|c_{k,l}|\left(\frac{1}{T}\right)^{\omega_{k,l}+1}
\end{align*}
with
\begin{align*}
C_3(\sigma,T)&:=e^{10/T} (1.2\eta(\sigma,T)+10\lambda_3) +B' \\
&  +\sum_{k=0}^{n}\sum_{l=1}^{m_k}|c_{k,l}| (13.2e^{10/T}+\frac{4}{\sqrt\pi}(\si+|b_k|)) \si^{-\omega_{k,l}}.\\
\end{align*}

Finally, we apply Lemma \ref{l:T} to $-L_\sigma$. Since $C_3(\sigma,T)>10\lambda_3\geq 10B_2=10\Phi(0)$, the validity of the estimate \eqref{eq:Ten} can be assured for $t\geq \frac{16}{T}\sqrt{1+T/10}$, hence for $t\geq
t_0:=32\max(1/30;{1}/{T})>1/\si_0$. Therefore, Lemma \ref{l:T} and \eqref{Lsigmachapeau} gives for such $t$ that
\begin{align*}
|L_\sigma(t)|  \leq &
20 \left(\frac{C_3(\sigma,T)}{T}+\frac{4}{\sqrt\pi}\si \sum_{k=1}^{n}\sum_{l=1}^{m_k}|c_{k,l}|\left(\frac{1}{T}\right)^{\omega_{k,l}+1}+\frac{1}{\pi}\eta(\si,T)\right)\\
&+20\sqrt{B_2} \sqrt{\frac{C_3(\sigma,T)}{T}+\frac{4}{\sqrt\pi}\si \sum_{k=1}^{n}\sum_{l=1}^{m_k}|c_{k,l}|\left(\frac{1}{T}\right)^{\omega_{k,l}+1}+\frac{1}{\pi}\eta(\si,T)}\\
&+20\sqrt{B_2\Phi(t/2)}.
\end{align*}
We choose $\sigma=1/t$ and $x=e^t$ and assume $x\geq x_0(T):=e^{t_0}=\max(e^{16/15}; e^{32/T})$ (hence $\sigma<\si_0$) to obtain
$$
\left\vert A(x)-x\sum_{k=0}^{n}\sum_{l=1}^{m_k}2\Re\left(c_{k,l}e^{-ib_k\log{x}}\right)\frac{(\log{x})^{\omega_{k,l}}}{\Gamma(\omega_{k,l}+1)}\right\vert \leq x\rho_T(x)
$$
with
\begin{equation}\label{rhomoderate}
\rho_T(x)=\frac{20 e}{1-e^{-1}}\left\{B_2\sqrt{\varphi(\sqrt{x})}+R_T(x)+\sqrt{B_2 R_T(x)}\right\}
\end{equation}
where
\begin{align*}
R_T(x)=&\frac{C_4(T)}{T}  + C_5(T)\eta\left(\frac{1}{\log{x}},T\right)
\\&+\frac{1}{T}\sum_{k=0}^{n}\sum_{l=1}^{m_k}
\left(C_{k,l}^{(1)} (\log{x})^{\omega_{k,l}}+\frac{C_{k,l}^{(2)}}{\log x}\left( (\log{x})^{\omega_{k,l}}+\left(\frac{1}{T}\right)^{\omega_{k,l}}\right)\right),
\end{align*}
$$C_4(T)=10\lambda_3 e^{10/T}+B', \quad C_5(T)=\frac{1.2e^{10/T}}{T}+\frac{1}{\pi},$$
$$C_{k,l}^{(1)}(T)=|c_{k,l}|\left(13.2e^{10/T}+\frac{4}{\sqrt\pi}|b_k|\right), \quad C_{k,l}^{(2)}=\frac{4}{\sqrt\pi}|c_{k,l}|.$$
\end{proof}

\begin{remark}\label{rem:noroots}
Observe that in case $B_2=0$ can be taken, the relative losses of taking squareroots in \eqref{rhomoderate} vanish, and we get the sharper estimate with
\begin{equation*}
\rho_T(x)=\left(\frac{e^{10/T}}{T}+1\right)\eta\left(\frac{1}{\log{x}},T\right)+
\frac{e^{10/T}}{T} \log^{\Om_{+}} x +\frac{1}{\log x} \left(\frac{1}{T^{\Om+1}}+\frac{1}{T^{\omega+1}}\right)
\end{equation*}
in place of \eqref{eq:rhodef} in the statement of the theorem.
\end{remark}

\begin{corollary}\label{cor:generalGerald} Under the same conditions then in Theorem \ref{th:kle}, but assuming (instead of the condition of $A(x)$ moderately decreasing) that $A(x)$ is nonnegative and nondecreasing, we obtain the bound \eqref{eq:Aasymptotic} with the error function
\begin{equation}\label{eq:increasingbound}
\rho_T(x)=\left(\frac{e^{10/T}}{T}+1\right)\eta\left(\frac{1}{\log{x}},T\right)+
\frac{e^{10/T}}{T} \log^{\Om} x +\frac{1}{\log x} \left(\frac{1}{T^{\Om+1}}+\frac{1}{T^{\omega+1}}\right).
\end{equation}
In particular, when $T\geq 1$, we find \eqref{eq:Aasymptotic} to hold with
\begin{equation}\label{eq:Geraldbound}
\rho_T(x)=\eta\left(\frac{1}{\log{x}},T\right)+\frac{\log^{\Om} x}{T} + \frac{1}{\log x ~ T^{1+\omega}}.
\end{equation}
\end{corollary}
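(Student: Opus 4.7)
The plan is to apply Theorem \ref{th:kle} after observing that a nonnegative, nondecreasing function $A$ is moderately decreasing in the trivial sense, with $B_1 = B_2 = 0$ in \eqref{eq:moderatedec}. Indeed, $A(u+v) - A(u) \geq 0$ for any $v \geq 0$ matches \eqref{eq:moderatedec} with those choices and any admissible $\varphi$. Furthermore $A_{-} \equiv 0$, whence $\|A_{-}\|_{[1,e]} = 0$ and the constant $\lambda_3 = B_1 + B_2$ coming from \eqref{minF} also vanishes.

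Retracing the proof of Theorem \ref{th:kle} with these trivial parameters, the decomposition $g_\sigma(u+v) - g_\sigma(u) = F_1(u,v) + g_\sigma(u) F_2(u,v)$ now has $F_1(u,v) \geq 0$ directly, since $A$ is nondecreasing. Thus the lower bound \eqref{eq:gdiff0} holds with $B' = 0$ and $\Phi \equiv 0$; the same simplification propagates to the lower bound \eqref{eq:Ldifference} for $L_\sigma(u+v) - L_\sigma(u)$, where both the $\Phi(u)$ contribution and the $\lambda_3$ contribution in $C_3(\sigma, T)$ drop out.

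When Lemma \ref{l:T} is applied to $-L_\sigma$ with $\Phi \equiv 0$ (so $\Phi(0) = 0$), both square-root terms $\sqrt{\Phi(0)\Phi(x/2)}$ and $\sqrt{\Phi(0)}\sqrt{b + \int|\widehat g|/\pi}$ in \eqref{eq:Ten} vanish identically, reducing that bound to its purely linear part. (Equivalently, since $\Phi$ plays no role, one may appeal to the Ganelius--Tenenbaum Lemma \ref{lemmegalten} directly.) Consequently, in the closing formula \eqref{rhomoderate} both $B_2\sqrt{\varphi(\sqrt{x})}$ and $\sqrt{B_2 R_T(x)}$ vanish, leaving $\rho_T(x) \ll R_T(x)$. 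Because no square root has to be extracted, the sum $\sum_{k,l} C_{k,l}^{(1)}(\log x)^{\omega_{k,l}}/T$ can now be bounded sharply by an absolute multiple of $(\log x)^{\Omega}/T$ (valid for $\log x \geq 1$, i.e.\ throughout the range $x \geq x_0(T) \geq e^{16/15}$), rather than the cruder $(\log x)^{\Omega_{+}}/T$ used in Remark \ref{rem:noroots}. Taking into account the analogous sharpening of the remaining terms yields \eqref{eq:increasingbound}.

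Finally, specializing to $T \geq 1$, the prefactor $e^{10/T} \leq e^{10}$ becomes an absolute constant which we absorb into the implicit $O$-constant; the three terms of \eqref{eq:increasingbound} collapse to the form \eqref{eq:Geraldbound}. The substantive analytic work has already been carried out in the proof of Theorem \ref{th:kle}; the only obstacle here is the careful book-keeping that verifies that setting $B_1 = B_2 = 0$ and $\Phi \equiv 0$ genuinely removes every square-root contribution from \eqref{rhomoderate}, with no residual term surviving.
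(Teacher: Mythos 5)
Your proposal is correct and follows essentially the same route as the paper's proof: specialize the proof of Theorem \ref{th:kle} (equivalently, Remark \ref{rem:noroots}) to the case $B_1=B_2=0$ and $\|A_{-}\|_{[1,e]}=0$, so that $\Phi\equiv 0$, $\lambda_3=0$, $B'=0$, the square-root terms in \eqref{rhomoderate} disappear, $\rho_T(x)\ll R_T(x)$, and the case $T\geq 1$ then yields \eqref{eq:Geraldbound} by absorbing $e^{10/T}\leq e^{10}$.

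One causal claim should be corrected, though. The replacement of $\log^{\Om_{+}}x$ by $\log^{\Om}x$ has nothing to do with "no square root has to be extracted": Remark \ref{rem:noroots}, where only $B_2=0$ is assumed, already has no square roots and still must use $\Om_{+}$. The actual reason -- and the one the paper invokes -- is that with $\lambda_3=0$ and $B'=0$ the constant $C_4(T)=10\lambda_3e^{10/T}+B'$ in the expression for $R_T(x)$ vanishes, so no standalone term of order $1/T$ survives; only then can the sum $\sum_{k,l}|c_{k,l}|(\log x)^{\omega_{k,l}}\ll\log^{\Om}x$ (valid for $\log x\geq 1$, as you note) stand alone instead of being merged with a constant into $\log^{\Om_{+}}x$. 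Since your bookkeeping does establish $\lambda_3=B'=0$ (hence $C_4(T)=0$), the conclusion is unaffected; only the stated justification of this one step is off.
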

\begin{proof} Compared to Remark \ref{rem:noroots} the only change we need to explain is the use of $\log^{\Om} x$ in place of $\log^{\Om_{+}} x$. But when $A$ is nondecreasing, then not only $B_2=0$, but also we have the condition \eqref{eq:moderatedec} even with $B_1=0$. Moreover, from nonnegativity of $A$ it also follows that $A_1:=\|A_{-}\|_{[1,e]}=0$, so also $B'=0$ and in the last formula of the proof of Theorem \ref{th:kle} giving $R_T(x)$, we find $C_4(T)=0$, too. Therefore, no term of the order $1/T$ appears, and instead of $1/T +\log^{\Om} x/T \asymp \log^{\Om_{+}} x /T$ we can as well write $\log^{\Om} x/T$.

When we even have $T\geq 1$, \eqref{eq:increasingbound} clearly entails \eqref{eq:Geraldbound}, too.
\end{proof}

\begin{remark}\label{rem:Gerald} Observe that the second part of Corollary \ref{cor:generalGerald} clearly covers the original result Theorem \ref{ikin} of Tenenbaum.
\end{remark}

A similar result can also be derived for functions satisfying the slowly decreasing condition, which is a more standard condition on the controlled decrease of functions.

\begin{theoreme}\label{th:kle1}
Let $A(x)$ be a real function of locally bounded variation, vanishing on $(-\infty,1)$, such that the Mellin transform $\mathcal{A}(s):=\int_{1}^{\infty} x^{-s} dA(x)$ converges for
$\sigma>1$. As in Theorem \ref{th:kle}, we define the functions $G(s)$ and $\eta_G(\si,T)$ according to \eqref{eq:Gdefwresidues} and \eqref{eq:etadefwres}, respectively, where $n$ and $m_k$ are positive integers, $c_{k,l}$ are complex numbers, $\omega_{k,l}$ and $b_k$ are real numbers satisfying $b_0=0$ and $b_k>0$ if $k\geq 1$, and $\omega_{k,l}>-1$, $T>0$ and $\sigma>0$.

Further we assume that $A(x)/x$ is slowly decreasing on $[1,\infty)$ in the sense of Definition \ref{def:Schmidt}, and that with $\Om:=\max_{k,l} \omega_{k,l}$ we have
\begin{equation}\label{eq:etacondifin}
\eta(\si,T)=O(\si^{-\Om_{+}}) \qquad (0<\si<1).
\end{equation}
Then for $x\geq  \max(e; e^{17/T},e^{17/\sqrt{T}})$ the error formula \eqref{eq:Aasymptotic} holds with
\begin{equation}\label{eq:rhoforslow}
\rho_T(x) =\sqrt{\Psi_{e^{1/T}}(1)\Psi_{e^{1/T}}(\sqrt{x})} +R_T(x)+\sqrt{\Psi_{e^{1/T}}(1)R_T(x)},
\end{equation}
where with $\omega:=\min_{k,l}\omega_{k,l}$
\begin{align}\label{eq:Rforslow}
R_T(x)= & \nu\left(\frac{A(x)}{x};e^{1/T}\right) + \left(1 +\frac{1}{T}\right)\eta\left(\frac{1}{\log x},T\right)\nonumber\\ &+ \frac{\log^{\Om_{+}} x +\log\log x}{T} + \frac{1}{T^{\omega+1}\log{x}}+\frac{1}{T^{\Om+1}\log{x}}
\end{align}
with $\overline{\nu}, \nu$ and $\Psi$ being the functions introduced in \eqref{eq:nubardef}, \eqref{eq:nudef} and \eqref{eq:nuPsilambda}, respectively, and where the implicit constants only depend on $\overline{\nu}(A(x)/x;e)$, $\|A\|_{[1,e]}$, $\AAA(2)$, the constant in \eqref{eq:etacondifin}, $c_{k,l}$ and $b_k$.
\end{theoreme}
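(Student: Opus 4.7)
The plan is to follow the proof of Theorem \ref{th:kle} step by step, substituting the slow-decrease tool \eqref{eq:nuPsilambda} for the moderate-decrease inequality \eqref{eq:moderatedec}. Under this substitution, the ``linear drift'' $B_1 v$ is replaced by the constant $\nu(A(x)/x; e^{1/T})$, while the role of $B_2 \varphi(e^{|u|})$ as a vanishing gauge function $\Phi(u)$ is taken over by $\Psi_{e^{1/T}}(e^{|u|})$. Correspondingly, the pointwise bound \eqref{minF} on $A(x)$ is replaced by the slow-decrease lower bound \eqref{minslowdec}, which now carries a logarithmic factor.

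We would introduce the same auxiliary functions $h_\sigma(t) := e^{-(1+\sigma)t} A(e^t)$, $g_\sigma(t) := h_\sigma(t) - h_{2\sigma}(t)$ and
$$L_\sigma(t) := g_\sigma(t) - 2 \sum_{k=0}^{n}\sum_{l=1}^{m_k} \Re(c_{k,l} e^{-i b_k t}) \sigma^{-\omega_{k,l}} \beta(\omega_{k,l}, \sigma t)$$
as in the proof of Theorem \ref{th:kle}. The Fourier-transform identity $\widehat{L_\sigma}(\tau) = G(\sigma + i\tau) - G(2\sigma + i\tau)$ remains valid, whence $\int_{-T}^{T} |\widehat{L_\sigma}(\tau)|\,d\tau = \eta(\sigma, T)$; and the estimate \eqref{gsigmachapeau} combined with hypothesis \eqref{eq:etacondifin} yields $\int_{-T}^{T} |\widehat{g_\sigma}(\tau)|\,d\tau \ll \sigma^{-\Om_{+}}$. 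Applying \eqref{eq:nuPsilambda} with $f(x) = A(x)/x$ at scale $\lambda = e^{1/T}$ then produces (after treating separately the sign cases on $u$ and on $u+v$, as in the moderate case) the slow-decrease counterpart of \eqref{eq:gdiff0}, of the shape
$$g_\sigma(u+v) - g_\sigma(u) \ge - C_1 v \|g_\sigma\|_\infty - \nu(e^{1/T}) - \Psi_{e^{1/T}}(e^{|u|}) \qquad (v \in [0, 1/T]).$$
An application of Lemma \ref{lemf} in the spirit of the proof of Theorem \ref{th:kleTfixe} under Condition \textit{1}, using \eqref{minslowdec} to secure the lower bound required by Case 1 of that lemma, then furnishes $\|g_\sigma\|_\infty \ll \sigma^{-\Om_{+}} + \log(1/\sigma)$. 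Adding the $\beta$-difference bound \eqref{eq:betaest} of Lemma \ref{l:beta} (exactly as in the passage from \eqref{eq:gdiff} to \eqref{eq:Ldifference}) upgrades this into a lower bound of the correct shape for $L_\sigma(u+v) - L_\sigma(u)$.

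Finally we would apply Lemma \ref{l:T} to $-L_\sigma$, taking $\Phi(u) := \Psi_{e^{1/T}}(e^{|u|})$ and $b$ equal to the sum of the remaining additive contributions, which include $\nu(e^{1/T})$, $\eta(\sigma, T)/T$ and the expected $T^{-\omega - 1}$ and $T^{-\Om - 1}$ tails produced by the $\beta$-difference. Since $\Phi(0) = \Psi_{e^{1/T}}(1)$ and $\Phi(t/2) = \Psi_{e^{1/T}}(\sqrt{x})$ when $t = \log x$, the conclusion \eqref{eq:Ten} yields exactly \eqref{eq:rhoforslow}--\eqref{eq:Rforslow} after setting $\sigma := 1/\log x$ and translating $|L_\sigma(\log x)|$ into the error in the asymptotic for $A(x)$. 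The lower cutoff $x \ge \max(e, e^{17/T}, e^{17/\sqrt{T}})$ comes from the threshold $\log x > x_0(T,b) = (16/T)\sqrt{1+\Phi(0)/b}$ of Lemma \ref{l:T}, which takes the two qualitatively distinct orders according as $b \asymp 1$ or $b \asymp 1/T$.

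The hard part will be the bookkeeping of constants needed to recover the precise form \eqref{eq:Rforslow}: in particular, that $\nu(e^{1/T})$ enters additively rather than multiplicatively against the decay factor $\Psi_{e^{1/T}}(\sqrt{x})$, and that the $\log\log x$ term in \eqref{eq:Rforslow} appears through the factor $\log(1/\sigma)$ produced by the bound on $\|g_\sigma\|_\infty$ (traceable back to \eqref{estZm} in the $m = 1$ case, i.e.\ to the $Z_1$-regime of Theorem \ref{th:kleTfixe}). A secondary challenge will be the verification of hypothesis \eqref{condlemf} for Lemma \ref{lemf}: this will require the same two-sided sign argument (with $\theta = \pm$) as in the proof of Theorem \ref{th:kleTfixe} under Condition \textit{1}, suitably adjusted so that the additive terms $\nu$ and $\Psi$ are absorbed by the threshold $\lambda_2$ rather than propagating into the multiplicative constant $\lambda_1$.
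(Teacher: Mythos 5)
Your overall architecture is the paper's: the same $g_\sigma$, $L_\sigma$, the identity $\widehat{L_\sigma}(\tau)=G(\si+i\tau)-G(2\si+i\tau)$, the slow-decrease substitute for \eqref{eq:moderatedec} via \eqref{eq:nuPsilambda}, Lemma \ref{lemf} for $\|g_\si\|_\infty$ and Lemma \ref{l:T} for $-L_\si$ with $\Phi(u)=\Psi_{e^{1/T}}(e^{|u|})$. But there is a genuine gap at the step where you claim that Lemma \ref{lemf}, ``using \eqref{minslowdec} to secure the lower bound required by Case 1'', furnishes $\|g_\sigma\|_\infty \ll \sigma^{-\Om_{+}}+\log(1/\sigma)$. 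The bound \eqref{minslowdec} only gives $A(e^u)/e^u\geq -M(1+u)$, hence $g_\si(u)\geq -M(1+u)e^{-\si u}(1-e^{-\si u})\geq -2M/\si$ (this is exactly \eqref{eq:gsibelow2} in the paper), so the admissible $\lambda_3$ in Case 1 is of order $1/\si$, and the output of Lemma \ref{lemf} is only $\|g_\si\|_\infty\ll \si^{-\Om_{+}}+1/\si$. Since $\|g_\si\|_\infty$ enters the difference bound multiplied by $v\leq 1/T$, with $\si=1/\log x$ this would inject a term of size $\log x/T$ into $R_T(x)$, which for $\Om<1$ is strictly weaker than the claimed $(\log^{\Om_{+}}x+\log\log x)/T$ in \eqref{eq:Rforslow}. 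So the stated error term does not follow from your outline as written.

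What is missing is a bootstrapping step, and it is substantive rather than bookkeeping: before running the main argument one must invoke Corollary \ref{c:localbddnessbyeta} (that is, Lemma \ref{l:Aandetacompare} combined with Theorem \ref{th:kleTfixe} under Condition {\it 1}, with $m=\Om+1$), which uses precisely the hypothesis \eqref{eq:etacondifin} together with the bounded decrease of $A(x)/x$, to obtain the a priori estimate $A(e^u)/e^u\geq -K\log u$ when $-1<\Om\leq 0$ and $\geq -Ku^{\Om}$ when $0<\Om<1$. Only this sharpens $\lambda_3$ to the form \eqref{deflambda3} ($\asymp\log(2/\si)$, resp.\ $\si^{-\Om}$, for $-1<\Om<1$), and only then does the $\lambda_3/T$ contribution reduce to $(\log^{\Om_{+}}x+\log\log x)/T$; it is also the sole source of the dependence on $\AAA(2)$ announced in the statement, which your outline leaves unexplained. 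You do gesture at the ``$Z_1$-regime of Theorem \ref{th:kleTfixe}'' as the origin of $\log\log x$, which is the right intuition, but in your outline the mechanism you actually invoke (Case 1 of Lemma \ref{lemf} with the lower bound from \eqref{minslowdec} alone) cannot deliver it; also note that the paper's proof here needs only the one-sided Case 1 of Lemma \ref{lemf} (with $\lambda_2=50\overline{\nu}(e^{1/T})+\|A\|_{[1,e]}$ absorbing $\overline{\nu}$), not the two-sided $\theta=\pm$ argument you mention, which belongs to Theorem \ref{th:kleTfixe}.
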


\begin{proof}
Our proof is very similar to the proof of Theorem \ref{th:kle}, so we use the same notations.
As the control on the decrease was not used in that, we obtain the same upper bounds for the integrals
$$
\int_{-T}^{T}|\widehat{L_\sigma}(\tau)|d\tau \quad \mbox{ and }\quad \int_{-T}^{T}|\widehat{g_\sigma}(\tau)|d\tau,
$$
while we need to get a new lower bound for the difference $g_\sigma(u+v)-g_\si(u)$.

We now write for $\sigma\in(0,1)$, $u\in\RR$ and $0\leq v\leq 1/T$
\begin{align*}
g_\sigma(u+v)-g_\sigma(u) =F_3(u,v)+g_\sigma(u)F_4(u,v)
\end{align*}
with
\begin{align*}
F_3(u,v)&:=e^{-\sigma(u+v)}\left(1-e^{-\sigma(u+v)}\right)\left( \frac{A\left(e^{u+v}\right)}{e^{u+v}}-\frac{A\left(e^u\right)}{e^u}\right),\\
F_4(u,v)&:=e^{-\sigma v}\frac{1-e^{-\sigma(u+v)}}{1-e^{-\sigma u}}-1=e^{-\si v}-1+e^{-\si v}\frac{1-e^{-\si v}}{e^{\si u}-1}.
\end{align*}
For any $\sigma\in(0,1)$, $u > 0$ and $v\geq 0$ we immediately have
\begin{equation}\label{eq:F4}
-v \leq  -\si v\leq -(1-e^{-\si v})\leq F_4(u,v)\leq \frac{1-e^{-\si v}}{e^{\si u}-1}\leq \frac{v}{u}.
\end{equation}
For $\sigma\in(0,1)$, $u > 0$ and $0\leq v\leq 1/T$ we get from \eqref{eq:nuPsilambda}
\begin{align}\label{eq:F3}
F_3(u,v)& \geq -\{F_3(u,v)\}_{-} =-e^{-\sigma(u+v)}\left(1-e^{-\sigma(u+v)}\right)\left\{ \frac{A\left(e^{u+v}\right)}{e^{u+v}}-\frac{A\left(e^u\right)}{e^u}\right\}_{-}  \notag \\
& \geq -\left\{ \frac{A\left(e^{u+v}\right)}{e^{u+v}}-\frac{A\left(e^u\right)}{e^u}\right\}_{-}  \geq -\nu(e^{1/T})-\Psi_{e^{1/T}}(e^u).
\end{align}
For simplicity we will write $\nu(\cdot)$ for $\nu(A(x)/x,\cdot)$ and $\overline{\nu}(\cdot)$ for $\overline{\nu}(A(x)/x,\cdot)$ and we define the function $\Phi$ by $\Phi(u):= \Psi_{e^{1/T}}(e^{|u|})$. Then for any $\sigma\in(0,1)$, $u \geq 1$ and $0\leq v\leq 1/T$, \eqref{eq:F4} and \eqref{eq:F3} entail the lower estimate
\begin{equation}\label{eq:gdiff1}
g_\sigma(u+v)-g_\sigma(u) \geq -\nu(e^{1/T})-\Phi(u)-\frac{1}{T}\|g_\si\|_\infty.
\end{equation}
In case $\sigma\in(0,1)$, $0< u \leq 1$ and $0\leq v\leq 1/T$, we estimate $g_\si(u)F_4(u,v)$ from below together. The only interesting cases are when $g_\si(u)F_4(u,v)$ is negative, hence when either $g_\si(u)>0$ and $0 > F_4(u,v) > - v$, or when $g_\si(u)<0$ and $0<F_4(u,v)\leq  v/u$. In the first case the combined estimate is still as good as $g_\si(u)F_4(u,v) \geq  - v \|g_\si\|_\infty \geq  -\frac1T \|g_\si\|_\infty$, so \eqref{eq:gdiff1} remains valid. In the second case we make use that $A(x)/x$ is slowly decreasing, hence in particular also boundedly decreasing, and thus according to \eqref{minslowdec} it satisfies $A(e^u)/e^u > - M (u+1)$, where $M:=M(A,1)$. So in case $0<u\leq 1$ we are led to
$$
g_\si(u)=\frac{A(e^u)}{e^u} e^{-\si u} (1- e^{-\si u}) \geq - M(u+1) (1- e^{-\si u}) \geq -2M \si u
$$
and so when $g_\si(u)<0$ and $0<u\leq 1$, $0<\si<1$ and $0\leq v\leq 1/T$ then it holds
\begin{equation}\label{eq:gF4usmall}
g_\si(u)F_4(u,v) > -2 M \si u  \frac{v}{u} \geq - \frac{2M\si}{T}.
\end{equation}
From this and \eqref{eq:gdiff1} we thus conclude that for any $0<\si \leq 1$, $u>0$, $0<v\leq 1/T$, the estimate
\begin{equation}\label{eq:gdiff2}
g_\sigma(u+v)-g_\sigma(u) \geq -\nu(e^{1/T})-\Phi(u)- \frac{2M\si}{T} - \frac{1}{T}\|g_\si\|_\infty
\end{equation}
holds true. Furthermore, we found for $0<\si\leq 1$ and $u>0$ the lower estimate
\begin{align}\label{eq:gsibelow2}
g_\si(u) & \geq - M(u+1) e^{-\si u} (1- e^{-\si u}) \notag
\\ & \geq \begin{cases}-2M \si u &\textrm{if} ~ 0<u\leq 1, \\ -2M \max\limits_{u\geq 1} ue^{-\si u} \geq -\frac{2M}{e \si} &\textrm{if} ~ u>1 \end{cases} \notag \\
& \geq -\frac{2M}{\si} .
\end{align}
In case $-1 < \Om < 1$ and $u>e$, this lower bound has to be sharpened. We invoke Corollary \ref{c:localbddnessbyeta} with $m:=\Om+1$ under Condition {\it 1} listed in Theorem \ref{th:kleTfixe}. We get for $u\geq e$
\begin{equation}
\frac{A(e^u)}{e^u} \geq \begin{cases} - K \log u &\textrm{if} \quad -1< \Om \leq 0 \\ - K u^{\Om} &\textrm{if} \qquad 0<\Om < 1 \end{cases},
\end{equation}
thus, if $0<\si<1$, we have for $u>e$
$$
g_\si(u)\geq \begin{cases} - K \log u ~e^{-\si u} \geq - {K} \log\left(2/\si\right) &\textrm{if}  \quad -1<\Om\leq 0 \\ - K u^{\Om} e^{-\si u}  \geq -K \left({\Om}/{e}\right)^\Om \si^{-\Om} &\textrm{if} \qquad 0<\Om < 1 \end{cases},
$$
while $g_\si(u) > -M(u+1) > -4M$ ($0 \leq u\leq e$) remain valid for all $\Om$, so we are led to the lower estimate
\begin{align}\label{eq:gsibelowmod}
g_\si(u) & \geq \begin{cases}- \max \left\{\frac{4 M}{\log 2} ,  {K} \right\} \log\left(2/\si\right) &\textrm{if}  \quad -1<\Om \leq 0  \\ - \max \left\{4M,K\left(\frac{\Om}{e}\right)^\Om \right\} / \si^\Om &\textrm{if} \qquad 0<\Om < 1 \end{cases} \notag
\\ & \geq  -(6M+K) \begin{cases} \log\left(2/\si\right) &\textrm{if} ~ \quad -1<\Om\leq 0 \\ \si^{-\Om} &\textrm{if} \qquad 0< \Om  <1 \end{cases},
\end{align}
valid for all $0<\si<1$ and $u\geq 0$, (so by $g_\si(u)=0$ $(u<0)$, for $u\in \RR$, too).

Writing
\begin{equation}\label{deflambda3}
\lambda_3:=  \begin{cases} {2M}/{\si} &\textrm{if}~ \quad \Om\geq 1\\(6M+K)\log\left(2/\si\right) &\textrm{if} ~ \quad -1<\Om\leq 0 \\ (6M+K)\si^{-\Om} &\textrm{if} \quad 0< \Om  <1 \end{cases}
\end{equation}
and taking \eqref{eq:gsibelow2} and \eqref{eq:gsibelowmod} into account,  we get  for all $0<\si<1$ and  $u\in \RR$
\begin{equation}\label{eq:gsibelow}
g_\si(u)\geq-\lambda_3.
\end{equation}

We choose now  $\lambda_2:=50\overline{\nu}(e^{1/T})+\|A\|_{[1,e]}$ and consider
$$
S_0(\lambda_2):=\{ u\in \RR ~:~ A(e^u) >\lambda_2 e^u \}.
$$

Let $u\in\RR$ be such that $g_\si(u)>\lambda_2$. Then $u>e$ and $u\in S_0(\lambda_2)$ and for $0\leq v\leq 10/T$ we have
$$
\frac{A(e^{u+v})}{e^{u+v}}-\frac{A(e^u)}{e^u} \geq -\overline{\nu}(e^{10/T})\geq -10\overline{\nu}(e^{1/T}) \geq -\frac15 \lambda_2.
$$
Therefore, for any $u\in\RR$ with $g_\si(u) >\lambda_2$, and for any $0\leq v\leq 10/T, 0<\si\leq \si_0=\min(1,\frac{\log 2}{10} T)$, we have
\begin{align*}
\frac{g_\si(u+v)}{g_\si(u)}& =\frac{A(e^{u+v})/e^{u+v}}{A(e^u)/e^u} ~\cdot~ \frac{e^{-\si(u+v)}(1-e^{-\si(u+v)})}{e^{-\si u}(1-e^{-\si u})} \\
& \geq  \left( 1- \frac{\lambda_2/5}{A(e^u)/e^u}\right) \cdot e^{-\si v} 
\geq \frac45 e^{-10\si_0/T} \geq \frac25.
\end{align*}

To estimate $\|g_\si\|_\infty$, we next use the first part of Lemma \ref{lemf} with $\lambda_1=2/5$, and the above chosen $\lambda_2$ and $\lambda_3$, together with \eqref{gsigmachapeau}. We get
\begin{align}\label{eq:gsigmanorm}
\|g_\sigma\|_\infty &\leq 0.55 \int_{-T}^{T} |\widehat{g_\si}(\tau)| d\tau + \max(\lambda_2,\lambda_3)
\notag \\&
\leq 50\overline{\nu}(e^{1/T})+\|A\|_{[1,e]}+\lambda_3+0.6 \eta(\sigma,T)
\notag \\ &
+6.6\sum_{k=0}^{n}\sum_{l=1}^{m_k}|c_{k,l}| \si^{-\omega_{k,l}}.
\end{align}
In turn, this estimate can now be substituted into \eqref{eq:gdiff2} resulting in
\begin{align*}
g_\sigma(u+v)-g_\sigma(u) \geq &  -\nu(e^{1/T})-\Phi(u)- \frac{1}{T} \Bigg\{ 50\overline{\nu}(e^{1/T})+\|A\|_{[1,e]} +2M\si
\\ & +\lambda_3+0.6 \eta(\sigma,T) + 6.6\sum_{k=0}^{n}\sum_{l=1}^{m_k}|c_{k,l}| \si^{-\omega_{k,l}}
\Bigg\}.
\end{align*}
Substituting this into \eqref{eq:Ldifference} leads, for $\sigma\in(0,\si_0)$, $u\in \RR$ and $0<v<1/T$ to$$
L_\sigma(u+v)-L_\sigma(u)
\geq -\Phi(u) - \nu(e^{1/T})-\frac{C_6(\si,T)}{T} -\frac{4}{\sqrt\pi}\si\sum_{k=1}^{n}\sum_{l=1}^{m_k}\frac{|c_{k,l}|}{T^{\omega_{k,l}+1}}~,
$$
with
\begin{align*}
C_6(\sigma,T)&:=50\overline{\nu}(e^{1/T})+\|A\|_{[1,e]}+\lambda_3+ 2M\si + 0.6 \eta(\sigma,T)\\
&+\sum_{k=0}^{n}\sum_{l=1}^{m_k}|c_{k,l}|\left(\frac{4}{\sqrt\pi}(\sigma+|b_k|)+6.6 \right)\si^{-\omega_{k,l}}.
\end{align*}

Finally, we apply Lemma \ref{l:T} to $-L_\sigma$, with $\Phi$ the even function occurring here and the role of $b$ played by the rest of the lower estimate for the difference of $L_\sigma(u+v)-L_\sigma(u)$. Note that then $b> 50 \overline{\nu}(e^{1/T})/T \geq 50 \{\nu(e^{1/T})+\Phi(0)\}/T \geq 50 \Phi(0)/T$, whence $x_0(T,b)\leq (16/T)\max(\sqrt{1.02},\sqrt{1.02T})<\max(17/T,17/\sqrt{T})$.
It follows from Lemma \ref{l:T}, \eqref{eq:Ten} and \eqref{Lsigmachapeau} that for $t>\max(17/T,17/\sqrt{T})$
$$
|L_\si(t)| \leq 20\left(\sqrt{\Phi(0)\Phi(t/2)} + \sqrt{\Phi(0) \left(b+\frac1\pi \eta(\si,T)\right)} + b+\frac1\pi \eta(\si,T)\right),
$$
whenever $0<\si<\min(1,\frac{\log 2}{10} T )$. Choosing in this estimate $t:=1/\si$ and $x:=e^t$, we get for $x \geq \max(e,e^{17/T},e^{17/\sqrt{T}})$
$$
\left\vert A(x)-x\sum_{k=0}^{n}\sum_{l=1}^{m_k}\Re\left(c_{k,l}e^{-ib_k\log{x}}\right) \frac{(\log{x})^{\omega_{k,l}}}{\Gamma(\omega_{k,l}+1)}\right\vert \leq x\rho_T(x)
$$
with
$$
\rho_T(x)=\frac{20 e}{1-e^{-1}}\left(\sqrt{\Psi_{e^{1/T}}(1)\Psi_{e^{1/T}}(\sqrt{x})} +R_T(x)+\sqrt{\Psi_{e^{1/T}}(1)R_T(x)}\right)
$$
where
\begin{align}\label{eq:firstRT}
R_T& (x)= \nu(e^{1/T}) + \left(\frac1\pi +\frac{0.6}{T}\right) \eta\left(\frac{1}{\log x},T\right)
\\& \notag + \left(8\overline{\nu}(e)+8|A(1)|+K\right) 
\frac{\log^{\min(1,\Om_{+})} x +\log (2 \log x)}{T}
\\& \notag +\frac{4}{\sqrt\pi}\frac{1}{T\log{x}}\sum_{k=1}^{n}\sum_{l=1}^{m_k}|c_{k,l}| \left(\frac{1}{T^{\omega_{k,l}}}+\log^{\omega_{k,l}} x \right)
\\ & +\frac1T \bigg(50\overline{\nu}(e^{1/T})
+\|A\|_{[1,e]}+\sum_{k=0}^{n}\sum_{l=1}^{m_k}|c_{k,l}| \left( \frac{4}{\sqrt\pi}|b_k|+6.6 \right)\log^{\omega_{k,l}}x \bigg).\notag
\end{align}
That concludes the proof.
\end{proof}

The above result has a point only for $T$ large, in particular when $T$ can be fixed arbitrarily large. For $T\geq 1$ the theorem gets a slightly simpler form as follows.

\begin{corollary}\label{c:kle1}
Let $A(x)$ be a function satisfying the same hypothesis as in Theorem \ref{th:kle1} and let $T\geq 1$. Define as previously the functions $G$ and $\eta(\sigma,T)$ ($0<\sigma<1$).
Then for $x\geq  \max(e,e^{17/\sqrt{T}})$, we have
$$
\left\vert A(x)-x\sum_{k=0}^{n}\sum_{l=1}^{m_k}\frac{(\log{x})^{\omega_{k,l}}}{\Gamma(\omega_{k,l}+1)}
2\Re\left(c_{k,l}e^{-ib_k\log{x}}\right)\right\vert \ll  x\rho_T(x)
$$
with the implicit constants only depending on $\overline{\nu}(e)$, $c_{k,l}$, $\omega_{k,l}$, $b_k$, $\AAA(2)$, $\|A\|_{[1,e]}$ and the implicit constant occurring in \eqref{eq:etacondifin}, and with
$$
\rho_T(x) =\sqrt{\Psi_{e^{1/T}}(1)\Psi_{e^{1/T}}(x)}+R_T(x)+\sqrt{\Psi_{e^{1/T}}(1)R_T(x)}
$$
where with $\omega:=\min_{k,l}\omega_{k,l}$ and $\Om:=\max_{k,l} \omega_{k,l}$
$$
R_T(x)=  \nu(e^{1/T}) + \eta\left(\frac{1}{\log x},T\right) + \frac{\log^{\Om_+} x+\log\log x}{T} + \frac{1}{T^{\omega+1}\log{x}}.
$$
\end{corollary}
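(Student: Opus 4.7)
The plan is to specialize Theorem \ref{th:kle1} to the regime $T \geq 1$ and simplify each component of the error bound; no independent argument beyond careful bookkeeping is needed, since the entire analytic machinery (the controlled-decrease lemmas, the Ganelius--Tenenbaum-type lemma, and the estimates on $\widehat{g_\sigma}$) has already been deployed in the proof of the theorem.

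First I would verify the admissibility threshold. Theorem \ref{th:kle1} requires $x \geq \max(e, e^{17/T}, e^{17/\sqrt{T}})$; the hypothesis $T \geq 1$ forces $1/T \leq 1/\sqrt{T}$, so the middle term is dominated by the last, and the admissible range collapses to $x \geq \max(e, e^{17/\sqrt{T}})$, as asserted in the corollary.

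Next I would reduce $R_T(x)$ from \eqref{eq:Rforslow} term by term. The prefactor $1 + 1/T$ multiplying $\eta(1/\log x, T)$ lies in $[1,2]$ under the assumption $T \geq 1$, and is absorbed into the implicit $O$-constant. For the pair $T^{-\omega-1} + T^{-\Om-1}$ occurring in \eqref{eq:Rforslow}, the inequality $\omega \leq \Om$ combined with $T \geq 1$ yields $T^{-\Om-1} \leq T^{-\omega-1}$, so the sum collapses to at most $2T^{-\omega-1}$, again absorbable. The remaining contributions $\nu(e^{1/T})$ and $(\log^{\Om_+} x + \log\log x)/T$ transfer verbatim. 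Substituting these into the formula $\rho_T(x) = \sqrt{\Psi_{e^{1/T}}(1)\Psi_{e^{1/T}}(\sqrt{x})} + R_T(x) + \sqrt{\Psi_{e^{1/T}}(1) R_T(x)}$ furnished by the theorem yields the compact expression stated in the corollary (where one may freely bound $\Psi_{e^{1/T}}(\sqrt{x})$ by $\Psi_{e^{1/T}}(1)$ or employ it as is, since $\Psi_{e^{1/T}}$ is nonincreasing).

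The only point that deserves genuine care — the main ``obstacle'', in the weak sense available for a direct corollary like this — is verifying that the list of parameters on which the implicit constant depends does \emph{not} grow when passing from Theorem \ref{th:kle1} to its specialization. Since every simplification step above replaces a quantity by a bounded multiple of itself (with factor $\leq 2$, and independent of $T$ once $T \geq 1$), the list of dependencies is exactly the one inherited from Theorem \ref{th:kle1}: namely $\overline{\nu}(A(x)/x;e)$, $\|A\|_{[1,e]}$, $\AAA(2)$, the constant in \eqref{eq:etacondifin}, the coefficients $c_{k,l}$, the exponents $\omega_{k,l}$, and the frequencies $b_k$. This completes the plan.
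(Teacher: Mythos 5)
Your route is the intended one: the paper gives no separate argument for Corollary \ref{c:kle1}, which is indeed just Theorem \ref{th:kle1} specialized to $T\geq 1$, and your bookkeeping of the admissible range ($e^{17/T}\leq e^{17/\sqrt{T}}$), of the factor $1+1/T\leq 2$ in front of $\eta$, and of $T^{-\Om-1}\leq T^{-\omega-1}$ (since $\omega\leq\Om$ and $T\geq1$) is exactly what is needed; absorbing these bounded factors into the implicit constant is harmless even under the square root in $\sqrt{\Psi_{e^{1/T}}(1)R_T(x)}$.

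One point you glossed over, and where your parenthetical remark actually runs in the unhelpful direction: Theorem \ref{th:kle1} furnishes the term $\sqrt{\Psi_{e^{1/T}}(1)\Psi_{e^{1/T}}(\sqrt{x})}$, whereas the corollary as printed has $\sqrt{\Psi_{e^{1/T}}(1)\Psi_{e^{1/T}}(x)}$. Since $\Psi_{e^{1/T}}$ is nonincreasing, $\Psi_{e^{1/T}}(x)\leq\Psi_{e^{1/T}}(\sqrt{x})$, so the printed corollary is (in this term) \emph{stronger} than what specialization of the theorem delivers; monotonicity lets you bound $\Psi_{e^{1/T}}(\sqrt{x})$ by $\Psi_{e^{1/T}}(1)$, as you note, but that does not convert $\Psi_{e^{1/T}}(\sqrt{x})$ into $\Psi_{e^{1/T}}(x)$. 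The source of the $\sqrt{x}$ is Lemma \ref{l:T}, whose conclusion involves $\Phi(t/2)=\Psi_{e^{1/T}}(e^{t/2})=\Psi_{e^{1/T}}(\sqrt{x})$ with $x=e^t$, so the proof genuinely produces $\sqrt{x}$ and not $x$. Almost certainly this is a slip in the statement of the corollary (it should read $\Psi_{e^{1/T}}(\sqrt{x})$, matching \eqref{eq:rhoforslow}), but your write-up should either flag this or state the corollary with $\sqrt{x}$; as it stands, your derivation proves the $\sqrt{x}$ version and does not literally establish the displayed one.
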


Let us give a small discussion of the last result. On the one hand it uses the very condition of "$A(x)/x$ is slowly decreasing", the condition appearing already in the earliest proofs of the Wiener-Ikehara Theorem. On the other hand it does not cover by itself this result, for we did not assume that strong assumptions on the boundary function. That slight shortcoming is remedied by the fact that for $A(x)$ increasing, we have the needed asymptotic evaluation anyway by Theorem \ref{th:kle}.

The classical proof of the Wiener-Ikehara Theorem was worked out in the case when we have $k=0$, $m_0=0$, that is only one term in the singular asymptotic, with $\omega_{0,0}=0$: $\AAA(s+1)\sim c/s$. Then the argument starts with proving that once $A(x)$ is increasing, we as well have $A(x)=O(x)$, a statement fully contained in Corollary \ref{c:meromorphiclocal} if the Laplace transform has a meromorphic continuation at $s=1$. This can easily be proved also under other, less stringent conditions like e.g. a continuous, or an $L^1$ boundary function on $[1-iT,1+iT]$ with some $T>0$.

Here in Theorem \ref{th:kle1} we settled with the more general condition of $\eta(\si,T)=O(1)$ ($\si\to 0$) with some $T>0$, and thus with the slightly less precise bound $A(x)=O(x\log\log x)$, coming from the general (lower) estimation of slowly decreasing functions \eqref{minslowdec}. On the other hand for the special case when $A(x)$ is increasing, we can as well use that $A(x)$ is moderately decreasing, and apply Theorem \ref{th:kle}, which already concludes, from only boundedness of $\eta$, the estimate $A(x)=O(x)$. (Note that in general $A(x)/x$ being slowly decreasing need not entail that $A(x)$ be moderately decreasing, so even the less increasing, as discussed in Section \ref{sec:slowdecrease}.) But once we know $A(x)=O(x)$, we then can conclude from $A(x)$ being increasing, (and also if only $A(x)$ is slowly decreasing, see Corollary \ref{cor:dividebylog}), that also  $A(x)/x$ is slowly decreasing, and thus also Theorem \ref{c:kle1} can be applied. However, this is kind of superfluous once Theorem \ref{th:kle} have already been used anyway: the two give very similar error bounds (with the former being slightly better in view of the occurrence of $\log\log x$ in the latter).

If for the application of these results we have a situation where $\Omega>0$, then the slight loss regarding the occurrence of $\log\log x$ diminishes and the order of $A(x)$ is to be $x\log^{\Om_{+}} x$, anyway. But in this case starting out even from an increasing function $A(x)$ it is not clear, how slow decrease of $A(x)/x$ may follow, if at all, while $A(x)$ is still of moderate decrease and Theorem \ref{th:kle} still works. To apply Theorem \ref{th:kle1}, one may have to consider the order of $\eta$, or other information about the behavior of $A(x)$. Alternatively, at all probability an analogous theorem, under the assumption that $A(x)/x\log^{m} x$ with $m=\Om_{+}$ is slowly decreasing, can similarly be proved, similarly as we have done in case of Theorem \ref{th:kleTfixe}. That would directly work here after an application of Proposition \ref{prop:dividebyfi} with $\ell(x):=x\log^{\Om_{+}}x$.

We spare the reader from these calculations, for in all cases a direct use of Theorem \ref{th:kle} with the increasing, or moderately decreasing properties of $A(x)$ would give a simpler solution. However, let us note that such a result would be slightly more general (as slow decrease of $A(x)/x$ would imply that of $A(x)/x\log^{\Om_{+}} x$, too, and as also the moderate decrease of $A(x)$ implies this condition -- assuming in both cases $A(x)=O(x\log^{\Om_{+}} x)$). Compare Remark \ref{rem:retro}.

\noindent
{\sc\small
IECN- Universit\'e Henri Poincar\'e : Nancy 1 \\ BP 239\\
F-54 506 VANDOEUVRE-L\`ES-NANCY cedex, FRANCE \\
E-mail: {\tt deroton@iecn.u-nancy.fr}\\
\ \\
and\\
\ \\
{\sc\small
Alfr\' ed R\' enyi Institute of Mathematics, \\ Hungarian Academy of Sciences, \\
1364 BUDAPEST, P.O.B. 127, HUNGARY}\\
E-mail: {\tt revesz@renyi.hu}\\
\smallskip
\indent and\\
\smallskip
\noindent
{\sc\small
Department of Mathematics, \\ Kuwait University \\
P.O. Box 5969 Safat -- 13060 KUWAIT}\\

\end{document}